\documentclass[10pt,a4paper]{amsart}
\usepackage{a4wide,amsmath,amsfonts,amssymb,amsthm,mathrsfs,setspace,bm,amsxtra,mathtools,wasysym,textcomp} 
\mathtoolsset{showonlyrefs}
\usepackage[english]{babel}
\usepackage{color}
\usepackage[all]{xy}
\usepackage{verbatim}

\parskip=3pt\voffset=12mm
\newtheorem{thm}{Theorem}[section]
\newtheorem{prop}[thm]{Proposition}
\newtheorem{lemma}[thm]{Lemma}
\newtheorem{defin}[thm]{Definition}
\newtheorem{cor}[thm]{Corollary}

\numberwithin{equation}{section}
\theoremstyle{definition}
\newtheorem{rem}[thm]{Remark}

\newcommand{\Aut}{\operatorname{Aut}}
\newcommand{\rk}{\operatorname{rk}}
\newcommand{\de}{\mbox{\rm d}}
\newcommand{\Hom}{\operatorname{Hom}}
\newcommand{\End}{\operatorname{End}}

\newcommand{\GL}{\operatorname{GL}}
\newcommand{\im}{\operatorname{Im}}
\newcommand{\Pic}{\operatorname{Pic}}
\newcommand{\Ol}{\mathcal{O}}

\newcommand{\R}{\mathcal{R}}
\newcommand{\A}{\mathcal{A}}

\newcommand{\G}{\mathcal{G}}
\newcommand{\F}{\mathcal{F}}

\newcommand{\M}{\mathcal{M}}
\newcommand{\E}{\mathcal{E}}
\newcommand{\Ss}{\mathcal{S}}
\newcommand{\U}{\mathcal{U}}
\newcommand{\V}{\mathcal{V}}
\newcommand{\W}{\mathcal{W}}
\newcommand{\K}{\mathcal{K}}

\newcommand{\N}{\mathcal{N}}

\newcommand{\id}{\operatorname{id}}

\newcommand{\Gr}{\operatorname{Gr}}
\newcommand{\tot}{\operatorname{Tot}}
\newcommand{\Spec}{\operatorname{Spec}}

\newcommand{\Pk}{P_{\vec{k}}}
\newcommand{\Lk}{L_{\vec{k}}}
\newcommand{\Gk}{G_{\vec{k}}}
\newcommand{\Z}{\mathbb{Z}}

\newcommand{\Com}{\mathbb{C}}
\newcommand{\Pu}{\mathbb{P}^1}
\newcommand{\On}{\Ol_{\Sigma_n}}

\newcommand{\Uk}{\U_{\vec{k}}}
\newcommand{\Vk}{\V_{\vec{k}}}
\newcommand{\Wk}{\W_{\vec{k}}}

\newcommand{\li}{\ell_\infty}

\newcommand{\Ext}{\operatorname{Ext}}


\newcommand{\lra}{\longrightarrow}

\newcommand{\Stab}{\operatorname{Stab}}
\newcommand{\ch}{\operatorname{ch}}
\newcommand{\Hilb}{\operatorname{Hilb}}
\newcommand{\tol}{\longrightarrow}
\newcommand{\toi}{\stackrel{\sim}{\tol}}

\newcommand{\Q}{\mathcal{Q}}
\newcommand{\Fl}{\operatorname{Fl}}
\newcommand{\lmt}{\longmapsto}
\newcommand{\tr}{\operatorname{tr}}

%
%
\newcommand{\Rep}{\operatorname{Rep}}
\newcommand{\sharpf}{\mathrm{fr}}
\definecolor{rosso}         {rgb}{1.00 , 0.00 , 0.00}

%
%

\title{Moduli spaces of framed sheaves and quiver varieties} 
\date{\today}
\subjclass[2010]{14D20;  14D21; 14J60; 16G20} 
\keywords{Framed sheaves, quiver varieties, Hirzebruch surfaces,  monads, ADHM data}

\begin{document}

\maketitle
\begin{center}{\sc Claudio Bartocci,$^\P$ Valeriano Lanza,$^{\S}$ and Claudio L. S.~Rava$^\P$ } \\[10pt]  \small 
$^\P$Dipartimento di Matematica, Universit\`a di Genova (Italy)\\[3pt]
  $^{\S}$Instituto de Matem\'{a}tica, Estat\'{i}stica e Computa{\c c}\~{a}o Cient\'{i}fica, Universidade Estadual de Campinas (Brazil)\\ 
\end{center}
\bigskip
\begin{quote}\small {\sc Abstract.} In the first part of this paper we provide a survey of some fundamental results about moduli spaces of framed sheaves on smooth projective surfaces. In particular, we outline a result by Bruzzo and Markushevich, and discuss a few significant examples. The moduli spaces of framed sheaves on $\mathbb{P}^2$, on multiple blowup of  $\mathbb{P}^2$ are described in terms of ADHM data and, when this characterization is available, as quiver varieties.\\
The second part is devoted to a detailed study of framed sheaves on the Hirzebruch surface $\Sigma_n$ in the case when the invariant expressing the necessary and sufficient condition for the nonemptiness of moduli spaces attains its minimum (what we call the ``minimal case''). Our main result is that, under this assumption, the corresponding moduli space is isomorphic to a Grassmannian (when $n=1$), or to the direct sum of $n-1$ copies of the cotangent bundle of a Grassmannian (when $n\geq 2$). Finally, by slightly generalizing a construction due to Nakajima, we prove that these moduli spaces admit a description as quiver varieties. 
 
\end{quote}

\bigskip
\section{Introduction}
In 1984 Donaldson \cite{Don84} showed that the moduli space of $SU(r)$-instantons on $\mathbb{R}^4$ is isomorphic to the moduli space of rank $r$ holomorphic vector bundles on the complex projective plane $\mathbb{P}^2$ which are {\em framed} on a line at infinity, i.e.~which are trivial when restricted  to that line and have a fixed trivialization there.  His proof exploited, on the one hand, the ADHM construction \cite{ADHM}, and, on the other, the monadic description of vector bundles over $\mathbb{P}^2$ \cite{barth77, hulek79, ocs80}. According to the first correspondence the moduli space of instantons  on $\mathbb{R}^4$ is interpreted as a 
hyper-K\"ahler quotient, while, according to the latter, the moduli space of framed holomorphic vector bundles on $\mathbb{P}^2$ is realized as a symplectic GIT quotient. The two constructions (as it was remarked by Donaldson) turn out to be equivalent thanks to a result by Kirwan \cite{K84} based on previous work by Kempf and Ness \cite{KN78}.

The generalization of the ADHM construction to the case of ALE spaces \cite{KNa90} prompted Nakajima to introduce the notion of {\em quiver variety} \cite{Na94}. 
Very broadly speaking, a Nakajima's quiver variety associated with a quiver $\Q$ can be described as a coarse moduli space
of semistable representations of an auxiliary quiver $\overline{\Q^{\sharpf}}$ concocted from $\Q$ (see \S \ref{quiversection} for precise statements). Under very mild assumptions 
(Theorem \ref{ginzburgtheorem} $=$ \cite[Theorem~5.2.2.(ii)]{Gin}), every Nakajima's quiver variety is a smooth quasi-projective variety and carries a holomorphic symplectic structure obtained through a Hamiltonian reduction. Besides the above motivations, the interest of such varieties lies in the fact that they have been used by Nakajima \cite{Na94, Na98} ``to give a geometric construction of universal enveloping algebras of Kac-Moody Lie algebras and of all irreducible integrable (e.g., finite dimensional) representations of those algebras'' \cite[p.~143]{Gin}.  For more information on this thriving subject see \cite{Schiff} and references therein.

 
After Donaldson's pioneering result, a great deal of work has been directed to the general study of moduli spaces of framed sheaves on (stacky) complex surfaces \cite{L93, HL95a, HL95b, N02a, N02b, BM, SAL, BS}.  Given a complex variety $X$, an effective divisor $D$ on $X$ and a torsion-free sheaf $\mathcal{F}_D$ on $D$, a {\em framed sheaf} is a pair $(\mathcal{E}, \theta)$, where 
$\mathcal{E}$ is a torsion-free sheaf on $X$ and $\theta\colon \mathcal{E}\vert_D \toi \mathcal{F}_D$ is an isomorphism. 
Huybrechts  and Lehn \cite{HL95a, HL95b}, working in a slightly more general setting, introduced a stability condition giving rise to fine moduli spaces of framed sheaves. On smooth projective surfaces, as shown by Bruzzo and Markushevich \cite{BM}, fineness can be ensured  by replacing  the stability condition by the one of ``good framing'' (see Definition \ref{goodnessdef}).  More precisely, they strengthened previous results by Nevins \cite{N02a, N02b} proving that, if $D$ is a big and nef curve on $X$ and $\F_D$ is a good framing sheaf on $D$, then for any class $\gamma \in H^\bullet(X,\mathbb Q)$ there exists a (possibly empty) quasi-projective scheme ${\mathcal {M}}_X(\gamma)$ that is a fine moduli space of $(D, \mathcal{F}_D)$-framed sheaves on $X$ with Chern character $\gamma$ 
(\cite[Thm~3.1 ]{BM} $=$ Thm~\ref{thmBM} herein).
%
%

However, the aforementioned theorem by Bruzzo and Markushevich does not provide any information either about the nonemptiness of the moduli space of framed shaves in question, or about its local and global geometric structure. The investigation of specific examples, therefore, retains its importance.
The original construction for framed bundles on $\mathbb{P}^2$ was extended by King \cite{Ki89} to framed holomorphic vector bundles on the blowup of $\mathbb{P}^2$ at a point and by Buchdahl  \cite{Bu93} to multiple blowups; Henni \cite{He} generalised Buchdahl's results to the non-locally free sheaves.  
The case of framed torsion-free sheaves on $\mathbb{P}^2$ was first described by Nakajima in his influential lectures \cite{Na99}. This represents a key example, because the corresponding moduli spaces admit an ADHM description in terms of linear data arising as moment map equation of a holomorphic symplectic quotient and are Nakajima's quiver varieties. Moreover, these spaces are desingularizations  of the moduli spaces of ideal instantons over $\mathbb{R}^4$ \cite{Na99}, so that they can be exploited to compute Nekrasov's partition function \cite{Nek03, BFMT} and to perform the so-called instanton counting 
(i.e., the technique consisting in the use of localization for the equivariant cohomology of the moduli space acted upon by a suitable torus to compute invariants of the moduli space and of the surface) \cite{NY05}. It should be pointed out that, contrary to the case of $\mathbb{P}^2$, the moduli spaces constructed by King, Buchdhal and Henni in the above cited papers have not been given a description as quiver varieties (even in a broader sense than Nakajima's original one). Whether or not this is possible seems to be a challenging problem.



Moduli spaces of framed sheaves on Hirzebruch surfaces $\Sigma_n$ have been studied in the papers \cite{BBR, BBLR} (see also \cite{Ra11, La15}). In particular, a fine moduli space $\M^{n}(r, a,c)$ parameterizing isomorphism classes of framed sheaves $\E$ on $\Sigma_n$, which have Chern character $\textrm{ch}(\E) = (r, aE, -c -\frac{1}{2} na^2)$ and are trivial, with a fixed trivialization, on  a ``line at infinity'',  is constructed by means of a monadic approach (Theorem \ref{thmEMon}). One can prove that the space $\M^{n}(r, a,c)$ is nonempty if and if the inequality $$c \geq C_{\text{m}}(n,a)= \frac{1}{2} na(1-a)$$
 is satisfied (Theorem \ref{mainthm}). We shall refer to the case when equality holds as the ``minimal case''.

In the rank 1 case the moduli space of framed sheaves on 
$\Sigma_n$ can be naturally identified with the Hilbert scheme of points on the total space of the line bundle $\Ol_{\Pu}(-n)$ over $\Pu$, in an analogous way to that
by which the moduli space of rank 1 framed sheaves on $\mathbb{P}^2$ is identified with $\operatorname{Hilb}^c(\mathbb{C}^2)$.
The schemes $\operatorname{Hilb}^c (\tot(\Ol_{\Pu}(-n)))$ admit a realization in terms of generalized ADHM data and turn out to be irreducible connected components of the moduli spaces of representations of suitable quivers. So, they  are quiver varieties, although {\it not} in Nakajima's sense (except for $n=2$), but in a more general one (see \cite{Gin} and Section \ref{quiversection}). When $n=2$, 
$\tot(\Ol_{\Pu}(-2))$ is the ALE space $A_1$, and indeed our description coincides with that one obtained by Kuznetsov in \cite{kuz}.

While, in general, the spaces $\M^{n}(r, a,c)$ seem to resist to be described as quiver varieties, such a description is achievable, in addition to the rank 1 case, also for the minimal case. Actually, as proved in Proposition \ref{corMshMq}, the schemes $\M^{n}(r, a,C_{\text{m}}(n,a))$ are quiver varieties, which slightly generalize those used by Nakajima \cite{Na94, Na96} to represent partial flag varieties (see Proposition \ref{propCotFlag}).

\bigskip
The present paper is organized as follows. Sections \ref{sectionBM}, \ref{quiversection}, and \ref{MFSsection} are chiefly of an expository nature.
In \S \ref{sectionBM} we outline Bruzzo and Markushevich's theorem about the existence of a fine moduli space for good framings sheaves, while in \S \ref{quiversection} we recall some basics facts about quivers, their representations, and quiver varieties. In \S \ref{MFSsection} we review the construction of the moduli spaces of framed sheaves on $\mathbb{P}^2$ and on $\mathbb{P}^2$ blown-up at $n$ distinct points: albeit both admit a description in terms of ADHM data,  only in the first case a characterization as quiver varieties is always available (see Remark \ref{remarkADHM}). Section \ref{SecMon} is devoted to 
summarize the basics of the construction of the moduli space of framed sheaves on Hirzebruch surfaces in terms of monads, as worked out in \cite{BBR, BBLR}. 
The last two sections contain our main results about the minimal case. In \S \ref{sectionminimalcase} we prove (Theorem \ref{thm:minimal}) that $\M^{1}(r, a,C_{\text{m}})$ is isomorphic to the Grassmannian  $\operatorname{Gr}(a,r)$, while, for $n\geq 2$,  $\M^{n}(r, a, C_{\text{m}})$ is isomorphic to the total space of the vector bundle $T^\vee\operatorname{Gr}(a,r)^{\oplus n-1}$. Moreover, we show (\S \ref{geometricremarks}) that, heuristically, the Grassmannian variety parameterizes the (isomorphism classes of) framings, while the fibres of the direct sum of (copies of) the cotangent bundles classify the sheaves away from the line at infinity. Finally, in \S 
\ref{Section-Nakajima-Flag} we provide a description of the moduli spaces $\M^{n}(r, a, C_{\text{m}})$ as quiver varieties.

\bigskip
{\bf Acknowledgements.}  We warmly thank our friend Ugo Bruzzo: not only he co-authored the two papers whose contents are summarised in Section \ref{SecMon}, but gave substantial contributions to the results of Section \ref{sectionminimalcase} as well. We are grateful to the anonymous referee, whose suggestions and remarks helped us to substantially improve the original, very much shorter, version of this paper.
This work was partially supported by the PRIN ``Geometria delle variet\`a algebriche'' and by the University of Genoa's research grant
``Aspetti matematici della teoria dei campi interagenti e quantizzazione per deformazione''. V.L. is supported by the
FAPESP post-doctoral grant number 2015/07766-4.

\bigskip{\bf Conventions.} A {\em scheme} is a separated scheme of finite type over $\mathbb{C}$; a {\em variety} is a reduced scheme. A {\em sheaf} is a coherent sheaf.

\bigskip\section{Moduli spaces of framed sheaves}\label{sectionBM}

We summarize the main definitions and results of Sections 2 and 3 of \cite{BM}. Let $X$ be a smooth projective variety of dimension $n\geq 2$, $D \subset X$ an effective divisor, and $\mathcal{F}_D$ a torsion-free sheaf on $D$. 

\begin{defin}\label{definframedsheaf} A torsion-free sheaf $\mathcal{E}$ on $X$ is \emph{$(D, \mathcal{F}_D)$-framable} if there is an $\mathcal{O}_X$-epimorphism $\mathcal{E} \to \mathcal{F}_D$ restricting to an isomorphism $\mathcal{E}\vert_{D} \toi \mathcal{F}_D$. A \emph{$(D, \mathcal{F}_D)$-framed sheaf $(\mathcal{E}, \theta)$ on $X$} is a pair consisting of a $(D, \mathcal{F}_D)$-framable sheaf $\mathcal{E}$ and a framing isomorphism $\theta\colon \mathcal{E}\vert_{D} \toi \mathcal{F}_D$. Two framed sheaves 
$(\mathcal{E}, \theta)$ and $(\mathcal{E}', \theta')$ are isomorphic if there is an $\mathcal{O}_X$-isomorphism $\Lambda\colon \mathcal{E} \toi \mathcal{E}'$ such that
$\theta' \circ \Lambda\vert_{D}= \theta$. 
\end{defin}

Let $H$ be a polarization on $X$. For any sheaf $\mathcal{E}$ on $X$, we denote by $P^H(\mathcal{E})$ the Hilbert polynomial $P^H(\mathcal{E})(k) =
\chi(\mathcal{E}\otimes \mathcal{O}_X(kH))$. If $\rk \E > 0$, for any nonnegative real number $\eta$ , the $\eta$-slope of $\E$ is the quantity 
$$\mu^H_{\eta}(\E) = \frac{c_1(\E)\cdot H^{n-1} - \eta}{\rk \E}\,.$$
For $\eta =0$ we recover, of course, the ordinary notion of slope, and so we write  $\mu^H_{0}(\E) = \mu^H(\E)$.
A framed sheaf $(\mathcal{E}, \theta)$ is said to be {\em $(H, \delta)$-stable}, where $\delta$ is a positive real number,
if any subsheaf $\Ss \subset \E$ with $0 < \rk  \Ss \leq \rk \E$ satisfies one of the following inequalities:
\begin{align*}
&\mu^H (\Ss) < \mu^H_\delta(\E) \quad \text{if}\, \Ss \, \text{is contained in the kernel of the restriction} \  \E \to \E\vert_{D}\,; \\
&\mu^H_\delta (\Ss) < \mu^H_\delta(\E) \quad \text{otherwise.}
\end{align*}
By applying the results proved by Huybrechts and Lehn \cite{HL95a, HL95b} in the setting of stable pairs, one can show that the isomorphism classes of $(H, \delta)$-stable framed sheaves $(\mathcal{E}, \theta)$ with fixed Hilbert polynomial $P^{H}(\mathcal{E})$ form a fine moduli space which is a quasi-projective scheme.

A fine moduli space for framed sheaves can be also constructed through a different approach. Instead of imposing (as above) a stability condition on the sheaf that is to be framed, one requires the framing divisor and the framing sheaf to have the {\it goodness} property stated in the following definition.
\begin{defin}\label{goodnessdef}  \cite[Def.~2.4]{BM} A divisor $D$ on $X$ is a \emph{good framing divisor} if it can be written as the sum of prime divisors $D_i$ with positive integer coefficients $a_i$, i.e.~$D = \sum a_i D_i$, and there exists a big and nef divisor of the form $\sum b_i D_i$, with $b_i \geq 0$. A sheaf $\F_D$ on $D$ is a \emph{good framing sheaf} if it is locally free and there exists a nonnegative real number $C < (\rk \F_D)^{-1} D^2 \cdot H^{n -2}$ such that, for any locally free subsheaf $\Ss_D \subset \F_D$ of constant positive rank, one has
\begin{equation*} \frac{\operatorname{deg} c_1(\Ss_D)}{\rk \Ss_D} \leq \frac{\operatorname{deg} c_1(\F_D)}{\rk \F_D} + C\,.
\end{equation*}
\end{defin}
If $D$ is a good framing divisor, and if we fix any vector bundle $\G_{D}$ on $D$, any polarization $H$ and any polynomial $P \in {\mathbb Q}[t]$, then the set $\mathcal M$ of 
torsion-free sheaves $\mathcal{E}$  such that   $\E|_{D}\simeq\G_{D}$ and $P^H(\mathcal{E})= P$  is {\em bounded}, that is to say, there exists a scheme $\mathfrak L$ along with a sheaf  ${\mathbf E}$ over $X\times \mathfrak L$ such that, for any $\mathcal{E} \in \mathcal M$, there is a closed point $q\in {\mathfrak L}$ 
and an isomorphism $\mathcal{E}  \simeq {\mathbf E}_{X\times \{q\}}$ \cite[Thm~2.5]{BM}. Let us restrict ourselves to the case  
$\dim X =2$. The fact that the set $\mathcal M$ is bounded implies that the Castelnuovo-Mumford regularity\footnote{Recall that, for a sheaf $\E$ on a polarized surface $(X,H)$, the Castelnuovo-Mumford regularity $\rho^H(\E)$ is the minimal integer $m$ such that $h^i (X, \E \otimes {\mathcal O}_X((m-i)H)) = 0$ for all $i >0$.}
$\rho^H(\E)$ is uniformly bounded over all of $\mathcal M$;
therefore,  by \cite[Lemma 1.7.9]{HLbook}, there is a constant $\tilde C$ 
(depending solely on $H$, $\F_D$ and $P$) such that $\mu^H(\Ss) \leq \mu^H(\E) + \tilde C$ for all $\E \in {\mathcal M}$ and for all nonzero subsheaves 
$\Ss \subset \E$. If the good framing divisor $D$ is a big and nef curve, then the divisor $H_N= H + ND$ is ample for any $N >0$:  a direct computation shows that there exists a positive integer $N^\ast$ such that the range of positive real numbers $\delta$, for which all the framed sheaves $\E \in \mathcal M$ are 
$(H_{N^\ast}, \delta)$-stable, is nonempty. Since, as said above, $(H_{N^\ast}, \delta)$-stable framed sheaves constitute fine moduli spaces which are quasi-projective schemes, the following result is proved.
\begin{thm}\label{thmBM} \cite[Thm~3.1]{BM} Let $X$ be a smooth projective surface, $D$ a big and nef curve on $X$, and $\F_D$ a good framing sheaf on $D$. Then for any class $\gamma \in H^\bullet(X,\mathbb Q)$, there exists a (possibly empty) quasi-projective scheme ${\mathcal {M}}_X(\gamma)$ that is a fine moduli space of $(D, \mathcal{F}_D)$-framed sheaves on $X$ with Chern character $\gamma$.
\end{thm}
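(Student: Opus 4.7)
The plan is to reduce to the Huybrechts--Lehn construction \cite{HL95a, HL95b}, which builds a fine quasi-projective moduli scheme of $(H,\delta)$-stable framed sheaves with a fixed Hilbert polynomial, by exhibiting a polarization $H'$ and a parameter $\delta>0$ for which every $(D,\F_D)$-framed sheaf $(\E,\theta)$ with $\ch(\E)=\gamma$ is automatically $(H',\delta)$-stable. Fix an ample $H$; since $\gamma$ together with $H$ determines the Hilbert polynomial $P$, it suffices to work with the set $\M$ of torsion-free sheaves $\E$ on $X$ with $\E|_D\simeq\F_D$ and $P^H(\E)=P$. The Chern character is locally constant on the Huybrechts--Lehn moduli space, so once $(H',\delta)$-stability is in force uniformly on $\M$, the locus $\{\ch=\gamma\}$ carves out $\M_X(\gamma)$ as an open and closed subscheme.

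The first ingredient is boundedness: by \cite[Thm 2.5]{BM}, the goodness hypotheses on $D$ and $\F_D$ ensure that $\M$ is bounded. This forces a uniform bound on the Castelnuovo--Mumford regularity $\rho^H(\E)$ for $\E\in\M$, whence \cite[Lemma 1.7.9]{HLbook} yields a constant $\tilde C=\tilde C(H,\F_D,P)$ such that
\[
\mu^H(\Ss)\leq \mu^H(\E)+\tilde C
\qquad\text{for all }\E\in\M \text{ and all }0\ne\Ss\subset\E.
\]

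The second ingredient is the twisted polarization $H_N:=H+ND$, which is ample for every $N>0$ because $H$ is ample and $D$ is big and nef. I split the stability check into the two cases built into the definition of $(H_N,\delta)$-stability. If $\Ss\subset\E$ lies in the kernel of $\E\to\F_D$, then $\Ss\hookrightarrow\E(-D)$, and $\mu^{H_N}(\Ss)$ picks up a negative correction of order $N D^2$ with respect to $\mu^{H_N}(\E)$; for $N$ large this swamps any $\delta/\rk\E$ and delivers the required strict inequality. If $\Ss$ is not in the kernel, then $\Ss|_D$ sits inside $\F_D$ as a locally free subsheaf of positive rank, and the goodness inequality of Definition \ref{goodnessdef} controls $\mu^H(\Ss|_D)$ with a strict slack $(\rk\F_D)^{-1}D^2\cdot H^{n-2}-C>0$. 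Combined with the uniform bound $\tilde C$, this controls the $H$-part of $\mu^{H_N}_\delta(\Ss)-\mu^{H_N}_\delta(\E)$, while the $ND$-part supplies the slack.

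The main obstacle will be precisely this second case: one has to weigh the linear-in-$N$ gain coming from goodness against the fixed error $\tilde C$ and the $\delta$-corrections, and simultaneously choose a single pair $(N^\ast,\delta)$ for which both strict inequalities hold over all of $\M$ within a nonempty interval of $\delta$. A direct computation of $\mu^{H_N}_\delta(\Ss)-\mu^{H_N}_\delta(\E)$, separating the $H$-, $ND$- and $\delta$-contributions, should produce the required $N^\ast$ and the interval of admissible $\delta$'s. Once this is achieved, the Huybrechts--Lehn construction applied to $(H_{N^\ast},\delta)$-stable framed sheaves with Hilbert polynomial $P$ produces a fine quasi-projective moduli scheme, whose locus $\{\ch=\gamma\}$ is the desired $\M_X(\gamma)$.
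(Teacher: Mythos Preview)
Your proposal is correct and follows essentially the same route as the paper's argument: boundedness via \cite[Thm~2.5]{BM}, the uniform slope bound from \cite[Lemma 1.7.9]{HLbook}, the twisted polarization $H_N=H+ND$, and a direct computation to find $N^\ast$ and a nonempty $\delta$-interval for which all framed sheaves in $\M$ become $(H_{N^\ast},\delta)$-stable, whereupon the Huybrechts--Lehn construction applies. You have in fact spelled out the two-case stability check in more detail than the paper does (it simply says ``a direct computation shows''); the only point to watch is that in the second case the image of $\Ss|_D$ in $\F_D$ need not itself be locally free, so one should pass to a suitable locally free subsheaf before invoking the goodness inequality.
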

In the case where the framing divisor $D$ is a smooth and irreducible curve and $D^2 >0$, a semistable vector bundle on $D$ is a good framing sheaf with $C=0$.
Hence, Theorem \ref{thmBM} entails the following result.
\begin{cor} \cite[Cor.~3.3]{BM}\label{corBM}
Let $X$ be a smooth projective surface, $D$ a smooth, irreducible, big and nef curve, and $\mathcal{F}_D$ a semistable vector bundle on $D$. 
For any class $\gamma\in H^\bullet(X, \mathbb {Q})$,  there exists a (possibly empty) quasi-projective scheme ${\mathcal {M}}_X(\gamma)$ that is a fine moduli space  
of $(D, \mathcal{F}_D)$-framed sheaves on $X$ with Chern character $\gamma$.
\end{cor}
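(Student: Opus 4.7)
The plan is to derive the corollary as a direct specialization of Theorem \ref{thmBM}, by verifying that under the stated hypotheses the pair $(D,\mathcal{F}_D)$ satisfies the goodness requirements of Definition \ref{goodnessdef}.

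First I would check that $D$ is a good framing divisor. Since $D$ is itself smooth, irreducible, big and nef, one may take the decomposition $D = 1\cdot D$ as a single prime divisor with coefficient $a_1=1$, and choose the big and nef divisor in Definition \ref{goodnessdef} to be $D$ itself (with $b_1=1$). So this part is essentially trivial.

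Next I would verify the condition on $\mathcal{F}_D$. The key observation is that on a smooth projective surface any big and nef divisor $D$ satisfies $D^2>0$ (this is a standard consequence of, e.g., the Zariski decomposition or Kodaira's lemma). Hence the quantity $(\rk\mathcal{F}_D)^{-1}\,D^2\cdot H^{n-2} = (\rk\mathcal{F}_D)^{-1}D^2$ appearing on the right-hand side of the inequality in Definition \ref{goodnessdef} is strictly positive, and one is free to take the constant $C=0$. With $C=0$, the defining inequality
\[
\frac{\deg c_1(\Ss_D)}{\rk\Ss_D}\leq \frac{\deg c_1(\mathcal{F}_D)}{\rk\mathcal{F}_D}
\]
for locally free subsheaves $\Ss_D\subset\mathcal{F}_D$ of constant positive rank is precisely the slope-semistability of the vector bundle $\mathcal{F}_D$ on the smooth irreducible curve $D$, which holds by hypothesis.

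Having shown that $\mathcal{F}_D$ is a good framing sheaf on the good framing divisor $D$ (which moreover is a big and nef curve), the statement is an immediate application of Theorem \ref{thmBM} to the fixed class $\gamma\in H^\bullet(X,\mathbb{Q})$. There is no real obstacle here; the only conceptual point to highlight is that the hypotheses have been tailored so that the goodness constant $C$ can be chosen to be zero, reducing the abstract sheaf-theoretic condition to the classical notion of semistability on a curve.
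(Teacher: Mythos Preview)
Your proof is correct and follows essentially the same approach as the paper, which simply observes (in the sentence preceding the corollary) that when $D$ is smooth, irreducible with $D^2>0$, a semistable vector bundle on $D$ is a good framing sheaf with $C=0$, and then invokes Theorem~\ref{thmBM}. You are just a bit more explicit in spelling out why $D$ is a good framing divisor and why big and nef forces $D^2>0$.
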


\bigskip\section{Generalities on quivers and quiver varieties}\label{quiversection}
In the subsequent sections we shall provide a few explicit realizations of moduli spaces of framed sheaves as quiver varieties. With this end in mind, we recall here some basic facts
about quiver representations and quiver varieties (see \cite{Gin} for details). 

A quiver $\mathcal{Q}$ is a finite oriented graph, given by a set of vertices $I$ and a set of arrows $E$.
The path algebra $\Com\mathcal{Q}$ is the $\Com$-algebra with basis the paths in $\mathcal{Q}$ and with a product given by the composition of paths whenever possible, zero otherwise. Usually  one includes among the generators of $\Com\mathcal{Q}$ a complete set of orthogonal idempotents $\{e_{i}\}_{i\in I}$: this can be considered a subset of $E$ by regarding $e_{i}$ as a loop of ``length zero'' starting and ending at the $i$-th vertex. A  (complex) representation of a quiver $\mathcal{Q}$ is a pair $(V,X)$, where $V= \bigoplus_{i\in I}V_{i}$ is an $I$-graded complex vector space and $X=(X_{a})_{a\in E}$ is a collection of linear maps such that $X_{a}\in\Hom_{\Com}(V_i,V_j)$
 whenever the arrow $a$ starts at the vertex $i$ and terminates at the vertex $j$.
We say that a representation $(V,X)$ is supported by $V$, and denote by $\operatorname{Rep}(\mathcal{Q},V)$ the space of representations of $\mathcal{Q}$ supported by 
$V$. Morphisms and direct sums of representations are defined in the obvious way; it can be shown that the abelian category of complex representations of $\Q$ is equivalent to the category of left $\Com\mathcal{Q}$-modules. A subrepresentation of a given representation $(V,X)$ is a pair $(S,Y)$, where $S$ is an $I$-graded subspace of $V$ which is preserved by the linear maps $X$, and $Y$ is the restriction of $X$ to $S$.  We  consider only finite-dimensional representations. If $\dim_{\Com} V_{i}=v_{i}$, a representation $(V,X)$ of $\mathcal{Q}$ is said to be $\vec{v}$-dimensional, where $\vec{v}=(v_{i})_{i\in I}\in\mathbb{N}^{I}$.  

More generally one can define the representations of a quotient algebra $B=\Com\mathcal{Q}/J$, for some ideal $J$ of the path algebra $\Com\mathcal{Q}$. In this case it is customary to call the pair $(\Q,J)$ a \emph{quiver with relations}. The representations of $B$ are the subset of the representations $(V,X)$ of $\mathcal{Q}$, whose linear maps $X=(X_{a})_{a\in E}$ satisfy the relations given by the elements of $J$. The abelian category of complex representations of $B$ is equivalent to the category of left $B$-modules.  We denote by $\operatorname{Rep}(B,\vec{v})$ the space of representations of $B$ supported by a given $\vec{v}$-dimensional vector space $V$. 
There is a natural action of $G_{\vec{v}}=\prod_i\GL(v_i)$ on $\operatorname{Rep}(B,\vec{v})$ given by change of basis. One would like to consider the space of isomorphism classes of $\vec{v}$-dimensional representations of $B$, but unfortunately in most cases this space is   ``badly behaved''. To overcome this drawback, adopting A.~King's strategy \cite{king}, one introduces a notion of (semi)stability depending on the choice of a parameter $\vartheta\in\mathbb{R}^{I}$. Let us recall that the $\vartheta$-{\emph slope} $\mu_\vartheta(V,X)$ of a nontrivial representation $(V,X)$ of $B$ is the ratio
$$\mu_\vartheta(V,X)= \frac{\sum_{i\in I} \vartheta_i v_i}{\sum_{i\in I} v_i}\,.$$
\begin{defin} \label{defstability}
A representation $(V,X)$ of $B$ is said to be $\vartheta$-semistable (resp., $\vartheta$-stable) if, for any proper nontrivial subrepresentation $(S,Y)$, one has $\mu_\vartheta(S,Y) \leq \mu_\vartheta(V,X)$
(resp., strict inequality holds).
\end{defin}
\begin{rem} It may be worth emphasizing that we do not assume  $\sum_{i\in I} \vartheta_i v_i = 0$ as in the original paper \cite{king}, following instead Rudakov's approach \cite{Rud97} (which is actually equivalent to King's one; see also \cite[Remark 2.3.3]{Gin}).
\end{rem}
Let $\operatorname{Rep}(B,\vec{v})^{ss}_\vartheta$ be the subset of $\operatorname{Rep}(B,\vec{v})$ consisting of semistable representations. According to Proposition 5.2 of \cite{king}, the coarse moduli space of $\vec{v}$-dimensional $\vartheta$-semistable representations of $B$ is  the GIT quotient 
$$\M(B,\vec{v})_{\vartheta}=\operatorname{Rep}(B,\vec{v})^{ss}_\vartheta/\!/_{\!\vartheta}G_{\vec{v}}\,.$$ 
If $\vec{v}$ is a primitive vector, then the open subset $\M^{s}(B,\vec{v})_{\vartheta} \subset \M(B,\vec{v})_{\vartheta}$ consisting of stable representations
makes up a fine moduli space \cite[Proposition 5.3]{king}.

A useful construction in studying representations of quivers is that of \emph{framed quiver}. Given a quiver $\Q$ with vertex set $I$ and arrow set $E$,
its framed quiver $\Q^{\mathrm{fr}}$ is defined as the quiver whose vertex set is $I\sqcup I'$, where $I'$ is a copy of $I$ with a fixed bijection $i\to i'$
and whose arrow set $E^{\mathrm{fr}}$ is obtained by adding for every $i\in I$ new arrows $i \stackrel{d_i}{\lra} i'$ to $E$. 

In view  of Remark \ref{hilbertasquiver} and of the results of Section \ref{Section-Nakajima-Flag}, we find it convenient to introduce also the notion of \emph{generalized framed}
(GF) quiver. Given a quiver $\Q$ as above, we denote by 
$\Q^{\mathrm{gfr}}$ \emph{an} associated quiver whose vertex set is $I\sqcup I'$, where $I'$ is a copy of $I$ with a fixed bijection $i\to i'$, and whose arrow set $E^{\mathrm{gfr}}$ is obtained by adding to $E$ new arrows 
\begin{equation}
\xymatrix@R+3em{
i \ar@/_10pt/[d]_-{a_{1}} \ar@/_30pt/[d]_-{a_{2}} \ar@/_45pt/@{.}[d] \ar@/_55pt/@{.}[d] \ar@/_65pt/[d]_-{a_{p(i)}} 
\\
 i' \ar@/_10pt/[u]_-{b_{1}} \ar@/_30pt/[u]_-{b_{2}} \ar@/_45pt/@{.}[u] \ar@/_55pt/@{.}[u] \ar@/_65pt/[u]_-{b_{q(i)}}
}
\end{equation}
with $p(i) > 0$ and $q(i) \geq 0$,  for all $i\in I$.
Of course, when $p(i)=1$ and $q(i)=0$ for all $i\in I$, one recovers the standard definition of $\Q^{\mathrm{fr}}$.

%

 A representation of $\Q^{\mathrm{gfr}}$ is supported by $V\oplus W$, where $V$ and $W$ are finite-dimensional $I$-graded vector spaces. If $\dim_{\Com} V_{i}=v_{i}$ and $\dim_{\Com} W_{i}=w_{i}$, a representation $(V\oplus W,X)$ of $\mathcal{Q}^{\mathrm{gfr}}$ is said to be $(\vec{v},\vec{w})$-dimensional, where $\vec{v}=(v_{i})_{i\in I}\in\mathbb{N}^{I}$ and $\vec{w}=(w_{i})_{i\in I}\in\mathbb{N}^{I}$. 
 We  denote by $\operatorname{Rep}(\Q^{\mathrm{gfr}},\vec{v},\vec{w})$ the space of representations of $\Q^{\mathrm{gfr}}$ supported by a fixed $(\vec{v},\vec{w})$-dimensional vector space $V\oplus W$. In the sequel, we shall assume that $\vec{w} \neq 0$.

Analogously to the unframed case, one can define the path algebra $\Com\K$ of a GF quiver $\K$, and consider the representations of the quotient algebra $\Lambda=\Com\K/L$, for any given ideal $L$  of $\Com \K$. Also the notion of subrepresentation is completely analogous to the unframed case. However, the representation space $\operatorname{Rep}(\Lambda,\vec{v},\vec{w})$ is regarded just as a $G_{\vec{v}}$-variety, where the group $G_{\vec{v}}$ acts by change of basis on $(V_{i})_{i\in I}$, whilst the action of the group  $G_{\vec{w}}$ is ignored.

The previous notion of (semi)stability is extended to the representations of a GF quiver by slightly modifying a result due to Crawley-Boevey \cite[p.~261]{CrBo}. Let $\Q$ be a quiver with vertex set $I$, $\Q^{\mathrm{gfr}}$ an associated GF quiver, and $J$ an ideal of the algebra $\Com\Q^{\mathrm{gfr}}$.
\begin{lemma}
\label{lmIsoCB}
For all dimension vectors $(\vec{v},\vec{w})\in\mathbb{R}^{I\sqcup I'}$, there exist a quiver $\Q^{\vec{w}}$, with vertex set $I\sqcup\{\infty\}$, and an ideal $J^{\vec{w}}$ of the algebra $\Com\Q^{\vec{w}}$ such that there is a $G_{\vec{v}}$-equivariant isomorphism
\begin{equation}
\operatorname{Rep}\left(\Com\Q^{\mathrm{gfr}}/J ,\vec{v},\vec{w}\right)\simeq\operatorname{Rep}\left(\Com\Q^{\vec{w}}/J^{\vec{w}}, \vec{v},1\right)\,.
\label{eqIsoCB0}
\end{equation}
\end{lemma}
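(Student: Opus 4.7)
The strategy is to execute the usual Crawley-Boevey unframing trick \cite{CrBo}, adapted to the present setting in which each framing arrow of $\Q^{\mathrm{gfr}}$ is replaced by several parallel arrows pointing to, or from, a single extra vertex of dimension one. First I would construct the quiver $\Q^{\vec{w}}$ explicitly: take as vertex set $I\sqcup\{\infty\}$, keep all arrows of $\Q$, and, for every $i\in I$, replace each of the $p(i)$ arrows $a_k\colon i\to i'$ of $\Q^{\mathrm{gfr}}$ by $w_i$ arrows $\alpha_{k,s}\colon i\to\infty$ (for $s=1,\dots,w_i$), and each of the $q(i)$ arrows $b_l\colon i'\to i$ by $w_i$ arrows $\beta_{l,s}\colon \infty\to i$.

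Next, fixing once and for all a basis $\{e^{i}_{1},\dots,e^{i}_{w_i}\}$ of each $W_i$, I would set up the bijection on representation spaces. An element $X_{a_k}\in\Hom_{\Com}(V_i,W_i)$ decomposes uniquely as $X_{a_k}(v)=\sum_{s} X_{a_k,s}(v)\,e^{i}_{s}$ with $X_{a_k,s}\in\Hom_{\Com}(V_i,\Com)$, and I declare these $w_i$ scalar components to be the data attached to the new arrows $\alpha_{k,s}$. Symmetrically, $X_{b_l}\in\Hom_{\Com}(W_i,V_i)$ is determined by the vectors $X_{b_l}(e^{i}_{s})\in V_i$, which are taken as the data of the arrows $\beta_{l,s}$. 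This yields a bijection on the underlying representation spaces that is $G_{\vec{v}}$-equivariant, because $G_{\vec{v}}$ acts trivially on the $W_i$ in the source and on $V_\infty=\Com$ in the target, so the fixed basis is preserved under its action.

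To produce $J^{\vec{w}}$, I would translate each generator of $J$ path by path. The key computation is
\begin{equation*}
X_{b_l}\circ X_{a_k}\;=\;\sum_{s=1}^{w_i}\beta_{l,s}\,\alpha_{k,s},
\end{equation*}
the right-hand side being an endomorphism of $V_i$ since $\beta_{l,s}\in V_i$ and $\alpha_{k,s}\colon V_i\to\Com$ combine into a rank-one operator on $V_i$. More generally, every path in $\Q^{\mathrm{gfr}}$ factors as a concatenation of arrows of $\Q$ and ``excursions'' of the form $b_l a_k$ through a framing vertex $i'$, and the above substitution rewrites such a path as a sum of paths in $\Q^{\vec{w}}$. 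Defining $J^{\vec{w}}$ as the ideal of $\Com\Q^{\vec{w}}$ generated by the images of the generators of $J$ under this substitution, one obtains an algebra whose $(\vec{v},1)$-dimensional representations are in $G_{\vec{v}}$-equivariant bijection with the $(\vec{v},\vec{w})$-dimensional representations of $\Com\Q^{\mathrm{gfr}}/J$, which is exactly \eqref{eqIsoCB0}.

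The main obstacle is essentially bookkeeping: checking that $J^{\vec{w}}$ is a well-defined ideal independent of the chosen bases of the $W_i$. A different basis rescales and permutes the arrows $\alpha_{k,s},\beta_{l,s}$ by an element of $G_{\vec{w}}$, so that individual generators change but the ideal they generate does not. Once this invariance is verified, the isomorphism \eqref{eqIsoCB0} follows directly from the construction.
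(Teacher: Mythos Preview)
Your argument is correct and follows exactly the approach the paper indicates: the paper states the lemma without a formal proof, merely describing $\Q^{\vec{w}}$ as the quiver obtained by adding a vertex $\infty$ together with $w_i\,p(i)$ arrows $i\to\infty$ and $w_i\,q(i)$ arrows $\infty\to i$, and later (just before Lemma~\ref{lmStabQdn}) spells out the isomorphism in the special case it needs via the same basis-decomposition $\tilde e_l=\varphi_l\circ e$, $\tilde f_{ql}=f_q\circ\psi_l$ that you use. One small imprecision: your claim that ``every path in $\Q^{\mathrm{gfr}}$ factors as a concatenation of arrows of $\Q$ and excursions $b_l a_k$'' holds only for paths with both endpoints in $I$; for a generator of $J$ lying in $e_{i'}(\Com\Q^{\mathrm{gfr}})e_j$ or $e_{i'}(\Com\Q^{\mathrm{gfr}})e_{j'}$ you must also split the initial or terminal framing arrow into its $w_i$ (resp.\ $w_j$) components, producing $w_i$ (resp.\ $w_i w_j$) relations in $J^{\vec{w}}$---but this is a routine extension of your rule and causes no difficulty.
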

The quiver $\Q^{\vec{w}}$ of Lemma \ref{lmIsoCB} is built by adding to $\Q$ a vertex at $\infty$, and, for any $i\in I$, a number of arrows from the vertex $i$ to the vertex $\infty$ and viceversa equal, respectively, to $w_i\,p(i)$ and $w_i\, q(i)$.
Note that a representation of $\Q^{\vec{w}}$ is supported by a vector space $V\oplus V_{\infty}$, where $V$ is an $I$-graded vector space, whilst $V_{\infty}$ is a vector space associated with the vertex $\infty$. Such a representation is said to be $(\vec{v},v_{\infty})$-dimensional, where $v_{\infty}=\dim_{\Com}V_{\infty}$. The group $G_{\vec{v}}$ acts on the right-hand side of \eqref{eqIsoCB0} by change of basis on $(V_{i})_{i\in I}$. Let $\vec{v}\in\mathbb{N}^{I}$ and $\vartheta\in\mathbb{R}^{I}$ be, respectively, a dimension vector and a stability parameter for a representation of $\Q$. One defines a stability parameter $\widehat{\vartheta}=(\vartheta,\vartheta_{\infty})\in\mathbb{R}^{I\sqcup\{\infty\}}$ for a $(\vec{v},1)$-dimensional representations of $\Q^{\vec{w}}$ by setting $\vartheta_{\infty}=-\vartheta\cdot\vec{v}$. 
\begin{defin}
\label{defFrStab}
A $(\vec{v},\vec{w})$-dimensional representation of $\Com\Q^{\mathrm{gfr}}/J$ is said to be $\vartheta$-semistable (resp., stable) if and only if its image through the isomorphism \eqref{eqIsoCB0} is $\widehat{\vartheta}$-semistable (resp., stable).
\end{defin}

Given a quiver $\Q$ with vertex set $I$, for any  $\vec{v},\vec{w}\in\mathbb{N}^{I}$, $\lambda\in\Com^{I}$ and $\vartheta\in\mathbb R^{I}$, one can define 
the associated \emph{Nakajima quiver variety} $\N_{\lambda,\vartheta}(\mathcal{Q},\vec{v},\vec{w})$ \cite{Na94}. 
The main steps of the construction of $\N_{\lambda,\vartheta}(\mathcal{Q},\vec{v},\vec{w})$ can be summarized as follows (see \cite{Gin} for further details):
\begin{enumerate}
 \item[(i)] One considers the framed quiver $\Q^{\sharpf}$ and the associated \emph{double} $\overline{\Q^{\sharpf}}$. The latter  has the same vertex set as $\Q^{\sharpf}$, and for any arrow $i\stackrel{a}{\lra}j$ in $E^{\sharpf}$, with $i,j\in I\sqcup I'$,  one adds an \emph{opposite} arrow $j\stackrel{a^{*}}{\lra}i$. It is easy to see that, for all dimension vectors $(\vec{v},\vec{w})$, there is an isomorphism
\begin{equation}
\Rep(\overline{\Q^{\sharpf}},\vec{v},\vec{w})\simeq T^{\vee} \Rep(\Q^{\sharpf},\vec{v},\vec{w})\,.
\end{equation}
As a consequence of that, $\Rep(\overline{\Q^{\sharpf}},\vec{v},\vec{w})$ carries a canonical holomorphic symplectic form $\tilde{\omega}$:
\begin{equation}
\tilde{\omega}=\tr\left(\sum_{a\in E}\de X_{a}\wedge\de X_{a^{*}}+\sum_{i\in I}\de X_{d_{i}}\wedge \de X_{d_{i}^{*}}\right)\,.
\label{eq-tildeomega}
\end{equation}
\item[(ii)] Let $\mathfrak{g}_{\vec{v}}=\bigoplus_{i\in I}\End_{\Com}(\Com^{v_{i}})$ be the Lie algebra associated with $G_{\vec{v}}$. The group $G_{\vec{v}}$ acts naturally on $\Rep(\overline{\Q^{\sharpf}},\vec{v},\vec{w})$ if one regards $\GL(v_i)$ as the group of   automorphisms of the vector space associated with the $i$-th vertex (of the original quiver $\mathcal{Q}$). Since this action is symplectic, one can introduce a moment map $\mu\colon \Rep(\overline{\Q^{\sharpf}},\vec{v},\vec{w})\to\mathfrak{g}_{\vec{v}}^\ast\simeq\mathfrak{g}_{\vec{v}}$, given by
\begin{equation}
(V\oplus W, X)\mapsto \sum_{a\in E}\left(X_a\circ X_{a^\ast}-X_{a^\ast}\circ X_a\right)+\sum_{i \in I}X_{d_{i}^{*}}\circ X_{d_{i}}\,.
\end{equation}
This gives rise to a moment element, which we call again $\mu$:
\begin{equation}
\mu=\sum_{a\in E}[a,a^{*}]+\sum_{i\in I} d_{i}^{*}d_{i}\in\Com\overline{\Q^{\sharpf}}\,.
\end{equation}
It is easy to see that $\mu$ admits a decomposition $\mu=(\mu_{i})_{i\in I}$, where $\mu_{i}\in e_{i}(\Com\overline{\Q^{\sharpf}})e_{i}$.
\item[(iii)] The \emph{framed preprojective algebra $\Pi_{\lambda}(\Q)$ of $\mathcal{Q}$ with parameter $\lambda$} is defined as the quotient $\Com\overline{\Q^{\sharpf}}/J$, where $J$ is the ideal of $\Com\overline{\Q^{\sharpf}}$ generated by the elements $\{\mu_i-\lambda_i\}_{i\in I}$. The fibre
$$\mu^{-1}\left(\sum_{i \in I}\lambda_i \mathbf{1}_{v_i}\right)  \subset  \operatorname{Rep}(\overline{\Q^{\sharpf}},\vec{v},\vec{w})$$
  is the space of $(\vec{v},\vec{w})$-dimensional representations of $\Pi_{\lambda}(\Q)$, which we shall denote by  $\operatorname{Rep}(\Pi_{\lambda}(\Q),\vec{v},\vec{w})$.
\item[(iv)] The quiver $\overline{\Q^{\sharpf}}$ can be regarded as a GF quiver associated with the double $\overline{\Q}$ of $\Q$. By applying to the representations of $\Pi_{\lambda}(\Q)$ the notion of (semi)stability introduced in Definition \ref{defFrStab},  one introduces the space $\Rep(\Pi_{\lambda}(\Q),\vec{v},\vec{w})_{\vartheta}^{\mathrm{ss}}$ of $\vartheta$-semistable representations and defines the
Nakajima quiver variety $\N_{\lambda,\vartheta}(\mathcal{Q},\vec{v},\vec{w})$ as the
 quotient 
 $$\N_{\lambda,\vartheta}(\mathcal{Q},\vec{v},\vec{w})=\Rep(\Pi_{\lambda}(\Q),\vec{v},\vec{w})_{\vartheta}^{\mathrm{ss}}/\!/_{\!\vartheta}G_{\vec{v}}\,.$$
\end{enumerate}

The symplectic form \eqref{eq-tildeomega} is $G_{\vec{v}}$-invariant, so that it induces a $G_{\vec{v}}$-invariant Poisson structure $\{-,-\}\sptilde$ on $\Rep(\Pi_{\lambda},\vec{v},\vec{w})_{\vartheta}^{\mathrm{ss}}$. As the quotient $\N_{\lambda,\vartheta}(\mathcal{Q},\vec{v},\vec{w})$ is a Hamiltonian reduction, it inherits a Poisson structure $\{-,-\}$. Theorem~5.2.2.(ii) in \cite{Gin} provides a sufficient condition for the smoothness of $\N_{\lambda,\vartheta}(\mathcal{Q},\vec{v},\vec{w})$ and for the nondegeneracy of $\{-,-\}$. To state this result we need to introduce some notation and a definition. Given a quiver $\Q$, with vertex set $I$ and arrow set $E$, we define its \emph{adiacency matrix} $A_{\Q}$ and its \emph{Cartan matrix} $C_{\Q}$ as follows: $A_{\Q}=(a_{ij})_{i,j\in I}$, where $a_{ij}=\sharp\{ \text{arrows in $E$ going from $j$ to $i$}\}$, and $C_{\Q}=2\bm{1}_{I}-A_{\overline{\Q}}$. For all dimension vectors $\vec{v}\in\mathbb{N}^{I}$ let $R_{\Q}(\vec{v})$ be the set
of vectors
\begin{equation}
R_{\Q}(\vec{v})=\left\{\vec{u}\in\Z^{I}\setminus\{0\}\left|
\begin{gathered}
(C_{\Q}\vec{u})\cdot \vec{u}\leq 2\\
0\leq u_{i}\leq v_{i}\qquad\text{for all $i\in I$}
\end{gathered}
\right.\right\}\,.
\end{equation}
Given a vector $\vec{s}\in\mathbb{R}^{I}$, we put $\vec{s}^{\bot}=\left\{\vec{t}\in\mathbb{R}^{I}\left|\vec{t}\cdot\vec{s}=0\right.\right\}$.
\begin{defin}\label{defin-v-reg}
Given a dimension vector $\vec{v}\in\mathbb{N}^{I}$, a pair of parameters $(\lambda,\vartheta)\in\Com^{I}\oplus\mathbb{R}^{I}\simeq \mathbb{R}^{3}\otimes_{\mathbb{R}}\mathbb{R}^{I}$ is said to be \emph{$\vec{v}$-regular} if and only if 
\begin{equation}
(\lambda,\vartheta)\in \mathbb{R}^{3}\otimes_{\mathbb{R}}\mathbb{R}^{I} )\setminus (\bigcup_{\vec{u}\in R_{\Q}(\vec{v})}\mathbb{R}^{3}\otimes_{\mathbb{R}}\vec{u}^{\bot})\,.
\end{equation}
\end{defin}
\begin{thm}\cite[Theorem~5.2.2.(ii)]{Gin} \label{ginzburgtheorem}
Let $\Q$ be a quiver with vertex set $I$,
let $\vec{v},\vec{w}\in\mathbb{N}^{I}$ be dimension vectors such that $\vec{w}\neq0$, and let $(\lambda,\vartheta)\in\Com^{I}\oplus\mathbb{R}^{I}$ be a $\vec{v}$-regular pair of parameters. Then all $\vartheta$-semistable representations of $\Pi_{\lambda}$ are $\vartheta$-stable, the variety $\N_{\lambda,\vartheta}(\mathcal{Q},\vec{v},\vec{w})$ is smooth and connected of dimension $2\vec{w}\cdot\vec{v}-(C_{\Q}\vec{v})\cdot\vec{v}$, and the Poisson structure $\{-,-\}$ is nondegenerate.
\end{thm}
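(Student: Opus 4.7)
The plan is to reduce to an unframed setting via Lemma~\ref{lmIsoCB} and then invoke the classical Hamiltonian-reduction machinery. Setting $\widehat{\vartheta}=(\vartheta,-\vartheta\cdot\vec{v})$ so that $\widehat{\vartheta}\cdot(\vec{v},1)=0$, the framed preprojective algebra $\Pi_{\lambda}(\Q)$ gets identified (on the nose, after extending $\lambda$ to the vertex $\infty$ via $\lambda_\infty=-\lambda\cdot\vec{v}$) with the preprojective algebra of the enlarged quiver $\Q^{\vec{w}}$. It therefore suffices to prove the four separate claims for the unframed quiver $\Q^{\vec{w}}$ with dimension vector $(\vec{v},1)$: semistable equals stable, smoothness, the dimension formula, and nondegeneracy of the induced Poisson structure.

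For the implication semistable $\Rightarrow$ stable, I would argue by contradiction. A strictly $\widehat{\vartheta}$-semistable representation of $\Pi_{\lambda}(\Q^{\vec{w}})$ of dimension $(\vec{v},1)$ admits a proper nonzero subrepresentation of some dimension vector $\vec{d}=(\vec{u},u_{\infty})\in\mathbb{N}^{I\sqcup\{\infty\}}$ with $\widehat{\vartheta}\cdot\vec{d}=0$. The existence of such a subrepresentation \emph{inside} the moment-map fibre $\mu^{-1}(\lambda)$ forces, via Crawley-Boevey's homological bound for preprojective algebras, the inequality $(C_{\Q^{\vec{w}}}\vec{d})\cdot\vec{d}\leq 2$ together with $\lambda\cdot\vec{d}=0$; after projecting onto $\mathbb{R}^{I}$ this places $\vec{u}$ in $R_{\Q}(\vec{v})$ and $(\lambda,\vartheta)$ in $\mathbb{R}^{3}\otimes_{\mathbb{R}}\vec{u}^{\bot}$, contradicting the $\vec{v}$-regularity hypothesis of Definition~\ref{defin-v-reg}.

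Once semistable and stable coincide, standard symplectic-reduction arguments take over: at a stable point the stabilizer in $G_{\vec{v}}$ reduces to the global scalars $\Com^{\ast}$, so the infinitesimal action is injective modulo scalars and, dually, $d\mu$ is surjective. Hence $\mu^{-1}(\sum_{i}\lambda_{i}\mathbf{1}_{v_{i}})^{s}$ is smooth, the free action of $G_{\vec{v}}/\Com^{\ast}$ produces a smooth quotient, and a routine dimension count (using $C_{\Q}=2\bm{1}_{I}-A_{\overline{\Q}}$) yields the asserted value $2\vec{w}\cdot\vec{v}-(C_{\Q}\vec{v})\cdot\vec{v}$. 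Nondegeneracy of the descended Poisson structure is then the classical output of Marsden-Weinstein reduction at a regular value. The deepest ingredient, and the one I expect to be the main obstacle, is connectedness: it demands the irreducibility of $\mu^{-1}(\sum_{i}\lambda_{i}\mathbf{1}_{v_{i}})_{\vartheta}^{ss}$, which is precisely Crawley-Boevey's theorem on moment-map fibres under the root-theoretic hypothesis encoded by $\vec{v}$-regularity; granted this, connectedness of the GIT quotient follows.
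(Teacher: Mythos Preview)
The paper does not prove this theorem at all: it is quoted verbatim from \cite[Theorem~5.2.2.(ii)]{Gin} and used as a black box, so there is no argument in the paper against which to compare your proposal.

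That said, your outline is essentially the route taken in Ginzburg's notes (reduction to the unframed quiver $\Q^{\vec{w}}$ via Crawley-Boevey's trick, then the root-theoretic analysis of the moment-map fibre). One point deserves tightening: the step ``a destabilizing subrepresentation of dimension $\vec{d}$ forces $(C_{\Q^{\vec{w}}}\vec{d})\cdot\vec{d}\leq 2$'' is not literally correct as stated, since an arbitrary submodule of a $\Pi_{\lambda}$-module need not have dimension vector a root. The actual argument passes to a Jordan--H\"older filtration by $\widehat{\vartheta}$-\emph{stable} factors of the same slope; each such factor is then a simple $\Pi_{\widehat{\lambda}}$-module, and it is Crawley-Boevey's characterization of dimension vectors of simples that produces a root $\vec{u}\in R_{\Q}(\vec{v})$ lying in $(\lambda,\vartheta)^{\bot}$. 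With that refinement your sketch is sound, and the remaining items (free action modulo scalars, surjectivity of $d\mu$, the dimension count, Marsden--Weinstein nondegeneracy, and Crawley-Boevey's irreducibility theorem for connectedness) are exactly the ingredients Ginzburg assembles.
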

Thus, under the hypotheses of Theorem \ref{ginzburgtheorem}, the symplectic form \eqref{eq-tildeomega} descends to a symplectic form $\omega$ on $\N_{\lambda,\vartheta}(\mathcal{Q},\vec{v},\vec{w})$. It is easy to see that a pair of parameters $(0,\vartheta)$, with $\vartheta\neq0$ and $\vartheta_{i}\geq0$ for all $i\in I$, is $\vec{v}$-regular for all dimension vectors $\vec{v}\in\mathbb{N}^{I}$.

\bigskip\bigskip

\section{Two examples of moduli spaces of framed sheaves}\label{MFSsection}

\subsection{Framed sheaves on $\mathbb{P}^2$} \label{P2section}
In the terminology of Section \ref{sectionBM}, we take the line at infinity $\ell_\infty=\{[z_0:z_1:0]\in\mathbb{P}^2\}$ as framing divisor, and the trivial sheaf 
$\Ol_{\ell_\infty}^{\oplus r}$ as framing sheaf. So, an $(\li, \Ol_{\li}^{\oplus r})$-framed  (hereafter, simply \emph{framed}) sheaf on $\mathbb{P}^2$ is a pair $(\mathcal{E},\theta)$, where $\mathcal{E}$ is a torsion-free sheaf of rank $r$ such that $\mathcal{E}|_{\ell_\infty}\simeq\Ol_{\ell_\infty}^{\oplus r}$, and $\theta\colon\mathcal{E}|_{\ell_\infty}\stackrel{\sim}{\longrightarrow}\Ol_{\ell_\infty}^{\oplus r}$ is an isomorphism. By Corollary \ref{corBM} we know that for any $\gamma\in H^\bullet(\mathbb{P}^2,\mathbb{Q})$ there exists a fine moduli space $\mathcal{M}_{\mathbb{P}^2}(\gamma)$ for framed sheaves on $\mathbb{P}^2$ with Chern character $\gamma=(\gamma_0,\gamma_1,\gamma_2)$. Note, however, that this space is empty whenever $\gamma_1\neq 0$, since the existence of the framing implies the vanishing of the first Chern class.  We denote by $\mathcal{M}(r,c)$ the space $\mathcal{M}_{\mathbb{P}^2}(\gamma)$ with $\gamma=(r,0,c)$.

The following result is due to Nakajima \cite{Na99} and provides an ADHM description for this moduli space (generalising a previous result by Donaldson for framed vector bundles on $\mathbb{P}^2$ \cite[Prop.~1]{Don84}).
\begin{thm}\cite[Thm~2.1]{Na99}\label{thm:Naka}
 There is an isomorphism of algebraic varieties between $\M(r,c)$ and $N(r,c)/\GL(c,\Com)$, where $N(r,c)$ is the quasi-affine variety of quadruples
 \[
  (B_1,B_2,i,j)\in\End(\Com^c)^{\oplus 2}\oplus\Hom(\Com^r,\Com^c)\oplus\Hom(\Com^c,\Com^r) 
 \]
 satisfying the conditions
 \begin{itemize}
  \item[(i)] $[B_1,B_2]+ij=0$;
  \item[(ii)] there exists no proper subspace $S\subsetneq \Com^c$ such that $B_\alpha(S)\subseteq S$ ($\alpha=1,2$) and $\im i\subseteq S$,
 \end{itemize}
and the group $\GL(c,\Com)$ acts on $N(r,c)$ by means of the formula
\[
 g\cdot(B_1,B_2,i,j)=(gB_1g^{-1},gB_2g^{-1},gi,jg^{-1})\,.
\]
\end{thm}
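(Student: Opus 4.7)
The strategy is the classical monadic approach. The key step is to show that every framed sheaf $(\E,\theta)\in\M(r,c)$ arises, uniquely up to the natural $\GL(c,\Com)$-action, as the cohomology of a monad
\[
0 \lra \Ol_{\mathbb{P}^2}(-1)^{\oplus c} \stackrel{a}{\lra} \Ol_{\mathbb{P}^2}^{\oplus(r+2c)} \stackrel{b}{\lra} \Ol_{\mathbb{P}^2}(1)^{\oplus c} \lra 0.
\]
This is obtained from Beilinson's spectral sequence: the framing condition $\E|_{\li}\simeq\Ol_{\li}^{\oplus r}$, fed into the exact sequences $0\to\E(-k-1)\to\E(-k)\to\Ol_{\li}(-k)^{\oplus r}\to 0$ for $k=1,2$, yields the vanishings $H^0(\E(-1))=H^0(\E(-2))=H^2(\E(-1))=H^2(\E(-2))=0$, after which the spectral sequence degenerates into the displayed monad with $c=h^1(\E(-1))$ and the middle term carrying the framing space $\Com^r$ as a canonical summand.

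Next, I would write the differentials in the homogeneous coordinates $[z_0:z_1:z_2]$. Their entries are linear forms, so one decomposes $a=z_0\alpha_0+z_1\alpha_1+z_2\alpha_2$ and $b=z_0\beta_0+z_1\beta_1+z_2\beta_2$ with constant matrices. The restriction of the monad to $\li=\{z_0=0\}$ must compute the trivial sheaf $\Ol_{\li}^{\oplus r}$ together with the prescribed framing $\theta$: this pins down the $(\alpha_1,\alpha_2,\beta_1,\beta_2)$-block, modulo the residual $\GL(c,\Com)$-action on the outer terms. The remaining freedom in $\alpha_0$ and $\beta_0$ is packaged by four matrices $B_1,B_2\in\End(\Com^c)$, $i\in\Hom(\Com^r,\Com^c)$, $j\in\Hom(\Com^c,\Com^r)$, and a direct computation shows that $b\circ a=0$ reduces to the single identity $[B_1,B_2]+ij=0$, i.e.\ condition (i).

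Matching condition (ii) then amounts to ensuring that the complex above be a genuine monad with torsion-free cohomology---equivalently, that $a$ be fibrewise injective and $b$ fibrewise surjective at every point of $\mathbb{P}^2$. The normalisation used above makes both properties automatic on $\li$; on $\mathbb{P}^2\setminus\li$, a point at which $a$ drops rank would produce a $(B_1,B_2)$-invariant proper subspace of $\Com^c$ containing $\im i$, which (ii) precisely excludes. Surjectivity of $b$ is dual and, for these Chern invariants, follows from (i)--(ii) and the injectivity of $a$. The cohomology $\E$ is therefore torsion-free with $\E|_{\li}\simeq\Ol_{\li}^{\oplus r}$, and the block structure on the middle term supplies the framing.

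Finally, the $\GL(c,\Com)$-action of the theorem coincides with the change-of-basis action on $\Com^c=H^1(\E(-1))$, so the construction descends to a set-theoretic bijection between $N(r,c)/\GL(c,\Com)$ and $\M(r,c)$. The main obstacle I anticipate is to promote this into an isomorphism of algebraic varieties: one must construct a universal monad on $\mathbb{P}^2\times N(r,c)$, exploit the freeness of the $\GL(c,\Com)$-action (guaranteed by (ii)) to descend its cohomology to a flat family of framed sheaves on $\mathbb{P}^2\times\bigl(N(r,c)/\GL(c,\Com)\bigr)$, and verify that this family corepresents the moduli functor whose representability is supplied by Corollary~\ref{corBM}.
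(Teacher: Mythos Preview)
Your proposal is correct and follows the standard monadic approach, which is precisely what the paper indicates: the paper does not supply its own proof of this theorem but cites \cite{Na99} and records only the key ingredient, namely that every such framed sheaf is the cohomology of the monad \eqref{monadP2}. Your outline is the expected expansion of that hint---Beilinson spectral sequence, normalisation along $\li$, extraction of $(B_1,B_2,i,j)$ from the $z_0$-components, and identification of condition (ii) with fibrewise injectivity of $a$---and matches Nakajima's original argument.

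One small point worth sharpening: the step ``surjectivity of $b$ is dual and, for these Chern invariants, follows from (i)--(ii) and the injectivity of $a$'' is not a formal duality. What is actually needed is that stability (ii) together with the ADHM equation (i) forces the \emph{costability} condition (no nonzero $v\in\Com^c$ with $B_\alpha^{\mathsf T}v\in\Com v$ and $j^{\mathsf T}v=0$), which in turn gives pointwise surjectivity of $b$; this is a short but genuine lemma (cf.\ \cite[Lemma~2.7 and Proposition~2.8]{Na99}) rather than an immediate symmetry.
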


\begin{rem}
Theorem \ref{thm:Naka}  allows us
to interpret the space $\M(r,c)$ as the Nakajima quiver variety $\N_{0,-1}(\mathcal{L}_{1},c,r)$, where $\mathcal{L}_{1}$ is the so-called \emph{Jordan quiver}, having one vertex and one loop at this vertex. In other words, $\M(r,c)$ can be also viewed as the moduli space of $(-1)$-semistable representations of the framed preprojective algebra $\Pi_{0}(\mathcal{L}_1)$; note that $\overline{\mathcal{L}_1^{\mathrm{fr}}}$ is the quiver

\[
  \xymatrix@R-2.3em{
  \mbox{\scriptsize$1$}\\
  \bullet\ar@(ul,dl)_{B_1}\ar@(ur,dr)^{B_2}\ar@/^2ex/[dddd]^{j}\\ \\ \\ \\
  \bullet\ar@/^2ex/[uuuu]^{i}\\
  \mbox{\scriptsize\phantom{'}$1'$}
} 
\]
It is easy indeed to recognize the moment map equation in the condition (i) of Theorem \ref{thm:Naka}, while condition (ii) ensures precisely that the representations we are considering are $(-1)$-semistable.
\end{rem}

The proof of Theorem \ref{thm:Naka} relies on the fact that any torsion-free sheaf $\E$ on $\mathbb{P}^2$ which is trivial along $\ell_\infty$ and has Chern character $\ch(\E) = (r, 0, c)$ is isomorphic to the cohomology of the monad 
\begin{equation}\label{monadP2}
M(a,b) :\qquad  \xymatrix{
  0\ar[r]&V\otimes\Ol_{\mathbb{P}^2}(-1)\ar[r]^-a&\widetilde{W}\otimes\Ol_{\mathbb{P}^2}\ar[r]^-b&V'\otimes\Ol_{\mathbb{P}^2}(1)\ar[r]&0\,,
 }
\end{equation}
where $V$, $\widetilde{W}$, and $V'$ are complex vector spaces of dimension, respectively, $c$, $2c+r$, and $c$. So, one has $\mathcal{E}\simeq\ker b/\im a$.

Applying Theorem \ref{thm:Naka} to the rank $1$ case, one gets ADHM data for the Hilbert scheme of points of $\Com^2$. Indeed, the double dual $\E^{\ast\ast}$ of $\E$ is locally free and has vanishing first Chern class, so that it is isomorphic to the structure sheaf $\Ol_{\mathbb{P}^2}$. As a consequence, since $\E$ is trivial along $\ell_{\infty}$, the mapping carrying $\E$ to the schematic support of $\Ol_{\mathbb{P}^2}/ \E$  yields an isomorphism
\begin{equation} 
\M(1,c) \simeq \operatorname{Hilb}^c (\mathbb{P}^2 \setminus \ell_{\infty}) = \operatorname{Hilb}^c (\Com^2)\,.
\end{equation} 
In this particular case the stability condition implies that $j=0$ \cite[Prop.~2.8 (1)]{Na99}, so that the description of $\Hilb^c(\Com^2)$ can be given in terms of triples $(B_1,B_2,i)$, and condition (i) of Theorem \ref{thm:Naka} can be rephrased by saying that $B_1$ and $B_2$ are commuting matrices.
\begin{rem}
Let $D$ be the divisor $\{\infty\}\times  \mathbb{P}^1 \cup  \mathbb{P}^1\times \{ \infty\}$ on the surface $\mathbb{P}^1 \times \mathbb{P}^1$. The moduli space
of $(D, \mathcal{O}_D)$-framed sheaves on $\mathbb{P}^1 \times \mathbb{P}^1$ of rank $r$ and second Chern class $c$ is isomorphic to $\M(r,c)$. So, there is an action of the group $\Gamma = \Z / n\Z$ on $\M(r,c)$ which is induced by the action of $\Gamma$ on $\mathbb{P}^1 \times \mathbb{P}^1$ given
by the multiplication of the second coordinate of the second $\mathbb{P}^1$ by the $n$-th roots of unity. It can be proved \cite{Bis, FR} that a connected component of $\M^\Gamma(r,c)$, the $\Gamma$-equivariant locus inside $\M(r,c)$, is isomorphic to the moduli space ${\mathcal P}_{\underline{c}}$ of parabolic bundles on $\mathbb{P}^1 \times \mathbb{P}^1$, where $\underline{c} = (c_0, \dots , c_{n-1})$ is a partition of $c$. Starting from the quiver construction of $\M(r,c)$, Finkelberg and Rybnikov \cite{FR} provided
a description of ${\mathcal P}_{\underline{c}}$ as a quiver variety (the quiver in question is a chainsaw quiver). Takayama \cite{Tak} exploited this description to establish an isomorphism between moduli spaces of solutions to Nahm's equations over the circle and moduli spaces of locally free parabolic sheaves over 
$\mathbb{P}^1 \times \mathbb{P}^1$.
\end{rem}

\subsection{Framed sheaves on multi-blow-ups of the projective plane}
We denote by $\widetilde{\mathbb{P}}^2$ the complex projective plane blown-up at $n$ distinct points $p_1,\dots,p_n\notin \ell_\infty$; let
$\varpi\colon \widetilde{\mathbb{P}}^2 \lra \mathbb{P}^2$ be the canonical projection. The Picard group of $\widetilde{\mathbb{P}}^2$ is freely generated on $\Z$ by the class $H$, the divisor of the pullback of the generic line in $\mathbb{P}^2$, and by the classes $\{E_i\}_{i=1}^n$, $E_i$ being the exceptional divisor corresponding to the blow-up at $p_i$. Analogously to the case of $\mathbb{P}^2$, 
we take $\tilde{\ell}_\infty =\varpi^{-1}(\ell_\infty)$ as framing divisor and the trivial sheaf 
$\Ol_{\tilde{\ell}_\infty}^{\oplus r}$ as framing sheaf.

Corollary \ref{corBM} ensures that there exists a fine moduli space $\mathcal{M}_{\widetilde{\mathbb{P}}^2}(\gamma)$ for framed sheaves $(\E, \theta)$ on $\widetilde{\mathbb{P}}^2$ with Chern character $\gamma=(r ,\gamma_1, -c +\frac{1}{2}\gamma_1^2) \in H^\bullet(\widetilde{\mathbb{P}}^2,\mathbb{Q})$. Notice that the first Chern class of every torsion-free sheaf which is trivial on $\tilde{\ell}_\infty$ has no component along $H$;  
hence, $\mathcal{M}_{\widetilde{\mathbb{P}}^2}(\gamma)$ is empty whenever $\gamma_1\cdot H \neq 0$. When 
$\gamma_1 = \sum_{i=1}^na_iE_i$, the space $\mathcal{M}_{\widetilde{\mathbb{P}}^2}(\gamma)$ will be denoted by $\widetilde{\M}(r;a_1,\dots,a_n;c)$.
An explicit description of such a space was given by Henni in \cite{He}. The particular result for locally free sheaves had been previously proved first by King \cite[Thm~3.3.2]{Ki89} in the case when $n=1$ and $c_1=0$ and then extended by Buchdal \cite[Prop.~1.10]{Bu04} to general values of $n$ and $c_1$.

\begin{thm} \cite[Prop.~2.20]{He}
A torsion-free sheaf $\E$ on  $\widetilde{\mathbb{P}}^2$ which is trivial along $\tilde{\ell}_{\infty}$ and has Chern character $\operatorname{ch}(\E) = (r, \sum_{i=1}^na_iE_i, -c -\frac{1}{2} \sum_{i=1}^na_i^2)$ is isomorphic to the cohomology of a monad 
\begin{equation}
\xymatrix{
0 \ar[r] & 
\bigoplus_{s=0}^nK_s\otimes\Ol_{\widetilde{\mathbb{P}}^2}(-H+E_s) \ar[r]^-{\alpha} & W\otimes\Ol_{\widetilde{\mathbb{P}}^2} \ar[r]^-{\beta} &
\bigoplus_{s=0}^nL_s\otimes\Ol_{\widetilde{\mathbb{P}}^2}(H-E_s) \ar[r] & 0
}\,,
\end{equation}
where $E_0:=0$ and $\{K_s\}_{s=0}^n$, $W$, $\{L_s\}_{s=0}^n$ are complex vector spaces with
 \begin{align*}
 \dim K_0&=c+\frac{1}{2}\sum_{i=1}^na_i(a_i+1)=:k\ ,\quad\dim L_0=c+\frac{1}{2}\sum_{i=1}^na_i(a_i-1)=:l\,,\\
 \dim W&= 2(n+1)k-2\sum_{i=1}^na_i+r\ ,\quad\dim K_s=k-a_s \ ,\quad \dim L_s=k\quad \text{($s=1,\dots,n$)}\,.
 \end{align*}
\end{thm}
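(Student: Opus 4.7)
The plan is to establish the monad via a Beilinson-type spectral sequence adapted to the multi-blowup surface $\widetilde{\mathbb{P}}^2$, with respect to the exceptional collection of line bundles suggested by the monad itself: $\{\Ol(-H+E_s)\}_{s=0}^{n}$, $\Ol$, and $\{\Ol(H-E_s)\}_{s=0}^{n}$ (with the convention $E_{0}=0$). Concretely, one assembles a resolution of the structure sheaf of the diagonal $\Delta\subset\widetilde{\mathbb{P}}^2\times\widetilde{\mathbb{P}}^2$ in terms of external tensor products of these line bundles and applies the corresponding integral transform to $\mathcal{E}$; equivalently, the above $2n+3$ line bundles form a tilting object on $\widetilde{\mathbb{P}}^2$ and one uses the derived equivalence with the module category of its endomorphism algebra. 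An alternative route is induction on $n$: the base $n=0$ is Nakajima's monad on $\mathbb{P}^2$ recalled in \S\ref{P2section}, and the inductive step shows that blowing up one further point $p\notin\ell_\infty$ produces an elementary modification of the monad introducing the extra summands $K_{n+1}\otimes\Ol(-H+E_{n+1})$ and $L_{n+1}\otimes\Ol(H-E_{n+1})$, in the spirit of the vector-bundle arguments of King \cite{Ki89} and Buchdahl \cite{Bu04}.

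The technical core consists of a list of cohomological vanishings for $\mathcal{E}$ twisted by each line bundle in the collection. The framing triviality supplies the short exact sequence
$$0 \to \mathcal{E}(-\tilde{\ell}_\infty) \to \mathcal{E} \to \Ol_{\tilde{\ell}_\infty}^{\oplus r} \to 0\,,$$
and, since the centers $p_1,\dots,p_n$ lie off $\ell_\infty$, one has $\tilde{\ell}_\infty\cdot E_s=0$ and $\varpi$ restricts to an isomorphism $\tilde{\ell}_\infty\toi\ell_\infty$, so that standard vanishings on $\mathbb{P}^2$ transfer cleanly via $\varpi$. Twisting the above sequence by each line bundle in the exceptional collection and invoking Serre duality (the canonical class of $\widetilde{\mathbb{P}}^2$ being $-3H+\sum_{i}E_i$), one checks that $H^j(\mathcal{E}\otimes\mathcal{L})$ is nonzero in only one degree for each $\mathcal{L}$, and precisely in the degree needed for the spectral sequence to collapse onto the predicted shape and to reconstruct $\mathcal{E}$ as the middle cohomology of a three-term complex.

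The numerical ranks $\dim K_s$, $\dim L_s$ and $\dim W$ are then read off from Riemann--Roch applied to each of the surviving cohomology groups, using the intersection data $H^2=1$, $H\cdot E_i=0$, $E_i^2=-1$, $E_i\cdot E_j=0$ for $i\neq j$, and the prescribed Chern character $\operatorname{ch}(\mathcal{E})=(r,\sum_i a_i E_i,-c-\tfrac{1}{2}\sum_i a_i^2)$. The identities $\dim K_0=k$, $\dim L_0=l$, $\dim K_s=k-a_s$, $\dim L_s=k$, and the formula for $\dim W$ then reduce to direct, if lengthy, arithmetic. The main obstacle is proving that the resulting complex is a bona fide monad, i.e.\ that the leftmost map $\alpha$ is fibrewise injective, the rightmost map $\beta$ is surjective, and $\beta\circ\alpha=0$. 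The composition vanishing is automatic from the spectral-sequence construction, while the injectivity of $\alpha$ and surjectivity of $\beta$ translate into the torsion-freeness of $\mathcal{E}$ together with the global generation conditions imposed by the framing isomorphism along $\tilde{\ell}_\infty$; it is precisely here that the hypothesis $p_i\notin\ell_\infty$ becomes indispensable.
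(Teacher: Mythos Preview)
The paper does not give its own proof of this statement; it is quoted verbatim from Henni \cite{He} (see also \cite{Ki89,Bu04} for the locally free case). Your overall strategy---a Beilinson-type spectral sequence coming from a resolution of the diagonal, combined with cohomological vanishings forced by the framing along $\tilde{\ell}_\infty$, and a Riemann--Roch computation for the ranks---is precisely the route taken in \cite{He}, so in that sense your proposal is on target.

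There is, however, one genuine slip. You assert that the $2n+3$ line bundles $\{\Ol(-H+E_s)\}_{s=0}^n$, $\Ol$, $\{\Ol(H-E_s)\}_{s=0}^n$ form a full exceptional collection, equivalently a tilting object, on $\widetilde{\mathbb{P}}^2$. This cannot be true for $n\geq 1$: the Grothendieck group $K_0(\widetilde{\mathbb{P}}^2)$ has rank $n+3$, so any full exceptional collection has exactly $n+3$ members, and a tilting bundle cannot have $2n+3$ pairwise non-isomorphic indecomposable summands. What Buchdahl and Henni actually use is an explicit, \emph{overdetermined} resolution of $\Ol_\Delta$ built by hand from these line bundles (cf.\ \cite[\S1]{Bu04}); it is not the Koszul-type resolution attached to a full exceptional sequence, and the resulting spectral sequence has more terms on the $E_1$-page than a genuine Beilinson sequence would. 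The degeneration and the identification of the monad still go through, but the mechanism is slightly more delicate than the clean tilting picture you sketch. Once you replace the tilting claim by ``an explicit resolution of the diagonal of the form $\bigoplus \Ol(-H+E_s)\boxtimes\Ol(H-E_s)\to\cdots\to\Ol_\Delta$'', the rest of your outline---the vanishings from the framing, the Riemann--Roch bookkeeping, and the verification that $\alpha$ is injective and $\beta$ surjective via torsion-freeness---matches the cited proof.
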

Before providing the ADHM description for $\widetilde{\M}(r;a_1,\dots,a_n;c)$, we notice that, since $p_1,\dots,p_n\notin\ell_\infty=\{[z_0:z_1:0]\in\mathbb{P}^2\}$, they all belong to the standard affine chart $U_2=\{z_2\neq 0\}$; we denote by $(p_i^0,p_i^1)$ the affine coordinates of $p_i$ inside $U_2$.

A space of ADHM data for $\widetilde{\M}(r;a_1,\dots,a_n;c)$ is given by the quasi-affine variety $H(r;a_1,\dots,a_n;c)$ in
\[ 
 \Hom(\Com^k,\Com^l)^{\oplus 3}\oplus\bigoplus_{s=1}^n\Hom(\Com^{k-a_s},\Com^k)\oplus\bigoplus_{s=1}^n\Hom(\Com^{k-a_s},\Com^l)\oplus\Hom(\Com^k,\Com^r)\oplus\Hom(\Com^r,\Com^l)
\]
characterized by the following conditions:\\
for any point $(A,C_0,C_1;B_1,\dots,B_n;B_1',\dots,B_n';e;f) \in H(r;a_1,\dots,a_n;c)$,\\
(i) the $(l+nk)\times(l+nk)$ matrix
\[
 M:=
 \begin{pmatrix}
   A & B_1'&B_2'&\cdots&B_n'\\
   \mathbf{1}_{k}&B_1&0&\cdots&0\\
   \mathbf{1}_{k}&0&B_2&\cdots&0\\
   \vdots&\vdots&\vdots&\ddots&\vdots\\
   \mathbf{1}_{k}&0&0&\cdots&B_n
  \end{pmatrix}
  \]
 is invertible;
 
 \noindent
(ii) the $(l+nk)\times(l+nk)$ matrices
 \[
 Q_j:=
  \begin{pmatrix}
   -C_j & p_1^jB_1'&p_2^jB_2'&\cdots&p_n^jB_n'\\
   p_1^j\mathbf{1}_{k}&p_1^jB_1&0&\cdots&0\\
   p_2^j\mathbf{1}_{k}&0&p_2^jB_2&\cdots&0\\
   \vdots&\vdots&\vdots&\ddots&\vdots\\
   p^j_n\mathbf{1}_{k}&0&0&\cdots&p_n^jB_n
  \end{pmatrix}\qquad \text{for $j=0,1$}\,,
 \]
satisfy the equation
 \[
  [Q_0M^{-1}Q_1-Q_1M^{-1}Q_0]_l^k+fe=0\,,
 \]
where the notation $[\,\star\,]_l^k$ denotes the block of the matrix $\star$ formed by the first $l$ rows and $k$ columns.

Let $G$ be the subgroup of $\GL(l+nk, \Com)\times\GL(l+nk,\Com)$ whose elements $(h,g)$ are of the following form
\begin{equation}\label{group-form}
h=\operatorname{diag}(h_{0},h_{1},\cdots,h_{n}),\quad\quad
g=\left(\begin{array}{lllll}g_{0}&g_{1}&g_{2}&\cdots&g_{n}\\
0&h^{-1}_{0}&0&\cdots&0\\
0&0&h^{-1}_{0}&\cdots&0\\
\vdots&\vdots&\vdots&\ddots&\vdots\\
0&0&0&\cdots&h^{-1}_{0}\\
\end{array}\right)\,,
\end{equation}
where $h_{0}\in\GL(k,\Com)$, $g_{0}\in\GL(l,\Com)$, $h_{s}\in\GL(k-a_{s}, \Com)$ and $g_{s}\in\operatorname{Mat}_\Com(l\times k)$ for $s=1,\dots,n$. We define a $G$-action on $H(r;a_1,\dots,a_n;c)$ indirectly, by means of the formulas
\begin{equation}
\left\{
\begin{array}{rcl}
M & \to & gMh\\
Q_{j} & \to & gQ_{j}h\quad\text{for}\quad j=0,1\\
e & \to & eh_{0}\\
f & \to & g_{0}f
\end{array}
\right.\qquad\qquad (h,g)\in G\,.
\end{equation}

Finally, we get the following result (cf.~also \cite[Thm~3.4.1]{Ki89} and \cite[\S 3]{Bu04}).
\begin{thm}\cite[Thm~6.1]{He} The variety $H(r;a_1,\dots,a_n;c)$ is a locally trivial principal $G$-bundle over $\widetilde{\M}(r;a_1,\dots,a_n;c)$; in particular, there is an isomorphism of smooth algebraic varieties  
$\widetilde{\M}(r;a_1,\dots,a_n;c) \simeq H(r;a_1,\dots,a_n;c)/G\,.$
\end{thm}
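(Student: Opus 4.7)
The plan is to establish the isomorphism $\widetilde{\M}(r;a_1,\dots,a_n;c)\simeq H(r;a_1,\dots,a_n;c)/G$ via the monad construction provided by the preceding theorem, and then upgrade the quotient to a locally trivial principal bundle by a gauge-slice argument. The first step is to build a regular map $\Phi\colon H(r;a_1,\dots,a_n;c)\to\widetilde{\M}(r;a_1,\dots,a_n;c)$: given ADHM data $(A,C_0,C_1,B_\bullet,B'_\bullet,e,f)$, one assembles the monad morphisms $\alpha$ and $\beta$ by writing their entries as $\Com$-linear combinations of global sections of the line bundles $\Ol_{\widetilde{\mathbb{P}}^2}(H-E_s)$, with coefficients dictated by the data. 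The core of this step is to show that condition (i), the invertibility of $M$, is equivalent to $\alpha$ being a fiberwise injective morphism of vector bundles (equivalently, a subbundle inclusion), whereas condition (ii), the matrix relation $[Q_0M^{-1}Q_1-Q_1M^{-1}Q_0]_l^k+fe=0$, encodes the monad identity $\beta\circ\alpha=0$ on the affine patches containing the blow-up points $p_1,\dots,p_n$; the rank $r$ components $e$ and $f$ couple the bulk data to the trivialization along $\tilde{\ell}_\infty$.

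The second step is to identify the fibers of $\Phi$ with $G$-orbits. Restricting the monad to $\tilde{\ell}_\infty$, whose Picard class lies in the line spanned by $H$, forces a canonical splitting that rigidifies simultaneously the framing of $W$ and the identifications among the $K_s,L_s$ for $s\geq 1$. This rigidification is exactly what cuts out the subgroup $G\subset\GL(l+nk,\Com)\times\GL(l+nk,\Com)$ from the naive gauge group of the monad; the off-diagonal entries $g_1,\ldots,g_n$ in \eqref{group-form} correspond to the residual freedom in how one splits $W$ against the canonical block sum once the framing has been imposed. A careful analysis of morphisms between monads of the prescribed form then shows that two sets of ADHM data yield isomorphic framed sheaves if and only if they lie in the same $G$-orbit, and surjectivity of $\Phi$ onto $\widetilde{\M}(r;a_1,\dots,a_n;c)$ is granted by the monad theorem quoted just above.

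The third step is to promote the induced bijection $H/G\to\widetilde{\M}$ to a locally trivial principal $G$-bundle. One first checks that $G$ acts freely on $H$: any stabilizer of a set of ADHM data would induce an automorphism of the associated monad compatible with the framing, which is necessarily the identity. Combined with the smoothness of $\widetilde{\M}(r;a_1,\dots,a_n;c)$, established via deformation theory of the monad (the obstruction spaces $\Ext^2$ vanish thanks to the framing), local triviality follows either from Luna's slice theorem applied to the free $G$-action or from exhibiting explicit Zariski-local gauge slices adapted to the block form \eqref{group-form}. The main obstacle is the second step: one must match the abstract gauge equivalence of monads with the peculiar block structure of $G$, and verify that the off-diagonal freedom genuinely survives once the framing at infinity has been imposed, which demands a delicate bookkeeping of the action on the individual summands $K_s$ and $L_s$.
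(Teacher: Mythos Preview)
The paper does not contain a proof of this theorem: it is quoted verbatim from Henni's article \cite{He} as part of the survey material in \S\ref{MFSsection}, and no argument is supplied. So there is no ``paper's own proof'' to compare your proposal against.

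That said, your outline is broadly the right shape for how results of this type are established (and indeed how \cite{He}, and before it \cite{Ki89} and \cite{Bu04}, proceed): one passes from ADHM data to a monad, checks that the monad conditions translate into the algebraic conditions (i) and (ii), identifies the residual gauge freedom with $G$, and then uses freeness of the action plus smoothness to get local triviality. Two small caveats on your sketch. First, the role you assign to condition (i) is not quite right: the invertibility of $M$ is what guarantees that the monad, restricted to $\tilde{\ell}_\infty$, has trivial cohomology of the correct rank (i.e.\ that the framing exists), rather than merely that $\alpha$ is a subbundle map; the fibrewise injectivity of $\alpha$ and surjectivity of $\beta$ are separate pointwise rank conditions. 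Second, your description of the off-diagonal blocks $g_1,\dots,g_n$ as ``residual freedom in how one splits $W$'' is morally correct but would need to be made precise: in Henni's set-up these blocks arise from nontrivial $\Hom$'s between the distinct line-bundle summands of the monad, and checking that they survive the framing constraint is exactly the ``delicate bookkeeping'' you flag at the end. None of this is a genuine gap in strategy, but if you were to write this up you would need to consult \cite{He} (or \cite{HePhD}) for the precise dictionary between the matrix conditions and the monad axioms.
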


\begin{rem}\label{remarkADHM}
Contrary to the case of $\mathbb{P}^2$, these ADHM data cannot be exploited, as they stand, to interpret the space $\widetilde{\M}(r;a_1,\dots,a_n;c)$, for general values of the invariants, as a quiver variety. The main difficulty in this respect is that the group $G$ cannot be regarded, in general, as the group of isomorphisms of representations of any quiver (cf.~\cite[proof of Lemma 6.7]{He}). Nevertheless, in the case $n=1$ this difficulty can be overcome: indeed, as shown by Henni \cite[\S 2.2.1]{HePhD}, the $G$-principal bundle $H(r;a_{1};c)\lra \widetilde{\M}(r;a_1;c)$ admits a reduction to a $\GL(k,\Com)\times\GL(l,\Com)$-principal bundle. It is likely that the space $\widetilde{\M}(r;a_1;c)$ can be embedded into a quiver variety: a hint in this direction comes from the case of rank $1$ sheaves. In fact there is an isomorphism $\widetilde{\M}(r;a_1;c)\simeq \M^{1}(r,a_{1},c)$, where the latter is the moduli space of framed sheaves on the first Hirzebruch surface (see Section \ref{SecMon}). When $r=1$, we know  that $\widetilde{\M}(1;a_1;c)\simeq \widetilde{\M}(1;0;c)$ is isomorphic to a connected component of a quiver variety \cite[Theorem 4.1]{BBLR}.
\end{rem}

\bigskip\section{Framed sheaves on Hirzebruch surfaces}\label{SecMon}

The $n$-th Hirzebruch surface $\Sigma_n$ can be defined as the projective closure of the total space of the line bundle $\mathcal{O}_{\mathbb{P}^1}(-n)$. 
There is a natural ruling $\Sigma_n\longrightarrow\Pu$ whose fibre
determines a class $F \in \Pic(\Sigma_n)$. Let $H$ and $E$ be the classes of  sections squaring, respectively, to $n$ and $-n$: one can prove that $\Pic(\Sigma_n)$ is freely generated on $\Z$ by $H$ and $F$; we put  $\On(p,q) = \On(pH+qF)$. It should be also recalled that  $\Sigma_n$ is a Poisson surface \cite[Remark 2.5]{BaMa}.

In what follows we assume $n>0$, in view of the fact that we wish to choose as framing divisor a curve in the class $H$. The class $H$ is big if and only if $n>0$ (for one has $H^{2}=n$), so that if $n=0$ a curve in $H$ does not satisfies the hypotheses of Corollary \ref{corBM}.

Let  $\ell_{\infty}\simeq\Pu$ be a ``line at infinity'' which belongs to the class $H$ and does not intersect $E$. 
An $(\ell_{\infty},  \Ol_{\li}^{\oplus r})$-framed sheaf (or, for brevity's sake, a {\em framed sheaf}) on $\Sigma_n$ is a pair $(\E, \theta)$, where $\E$ is a rank $r$ torsion-free sheaf trivial along  $\ell_{\infty}$ and $\theta \colon \E\vert_{\ell_{\infty}}\stackrel{\sim}{\longrightarrow}\Ol_{\li}^{\oplus r}$ is an isomorphism. Notice that the condition of being trivial at infinity implies $c_{1}(\E)\propto E$.

By Corollary \ref{corBM}, there exists a fine moduli space  $\M^{n}(r,a,c)= \M_{\Sigma_n}(\gamma)$ parameterizing isomorphism classes of framed sheaves $(\E, \theta)$ on $\Sigma_n$ with Chern character $\textrm{ch}(\E) =\gamma = (r, aE, -c - \frac{1}{2} na^2)$.
We assume that the framed sheaves are normalized in such a way that $0\leq a\leq r-1$.

\begin{thm} \cite[Cor.~4.6]{BBR}\label{thmEMon}
A torsion-free sheaf $\E$ on  $\Sigma_n$ which is trivial along $\ell_{\infty}$ and such that $\textrm{ch}(\E) = (r, aE, -c -\frac{1}{2} na^2)$ is isomorphic to the cohomology of a monad 
\begin{equation}
\xymatrix{
M(\alpha,\beta):&0 \ar[r] & \Uk \ar[r]^-{\alpha} & \Vk \ar[r]^-{\beta} & \Wk \ar[r] & 0
}\,, \label{fundamentalmonad}
\end{equation}
where $\vec{k} = (n,r,a,c)$ and 
\begin{equation} 
\Uk:=\On(0,-1)^{\oplus k_1},\quad
\Vk:=\On(1,-1)^{\oplus k_2} \oplus \On^{\oplus k_4},\quad
\Wk:=\On(1,0)^{\oplus k_3}\,,
\end{equation}
with
\begin{equation} 
 k_1=c+\dfrac{1}{2}na(a-1),\quad
k_2=k_1+na,\quad
k_3=k_1+(n-1)a,\quad
k_4=k_1+r-a\,.
\label{k_i}
\end{equation}
\end{thm}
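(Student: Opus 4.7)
The plan is to realise $\E$ as the cohomology in degree zero of a Beilinson-type spectral sequence on $\Sigma_n$. Concretely, I would start from a locally free resolution of the structure sheaf of the diagonal $\Delta\subset \Sigma_n\times \Sigma_n$ whose terms are external tensor products of line bundles in a full strong exceptional collection on $\Sigma_n$; after a suitable overall twist, the line bundles involved are precisely $\On(0,-1)$, $\On(1,-1)$, $\On$ and $\On(1,0)$, matching those appearing in \eqref{fundamentalmonad}. Since the exceptional collection has length four while the Picard surface has codimension two, the resolution has three terms with the middle one being a direct sum of two line bundles — which already explains the two nonisomorphic summands of $\Vk$. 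Pulling back $p_{1}^{\ast}\E$ and pushing forward along the second projection then yields a convergent spectral sequence whose $E_{1}$-entries are of the form $H^{q}(\Sigma_n,\E\otimes L_p)\otimes M_p$ for suitable line bundles $L_p,M_p$ and whose abutment is $\E$ concentrated in total degree zero.

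The next ingredient is the triviality of $\E$ along $\ell_\infty$, which I would use to collapse the $E_{1}$-page to a single row. From the short exact sequence
\begin{equation}
0 \to \E(-H) \to \E \to \E|_{\ell_\infty} \simeq \Ol_{\li}^{\oplus r} \to 0
\end{equation}
arising from $\ell_\infty\in|H|$, combined with the torsion-freeness of $\E$, the constraint $c_1(\E)=aE$, and Serre duality on $\Sigma_n$, I would deduce the vanishings $H^{0}(\Sigma_n,\E\otimes L)=H^{2}(\Sigma_n,\E\otimes L)=0$ for the relevant negative twists $L$ (dual to the summands in the monad). Only the $H^{1}$-row of the $E_{1}$-page then survives, so that the spectral sequence degenerates at $E_{2}$ and identifies $\E$ with the cohomology in the middle term of a three-term complex of the exact shape \eqref{fundamentalmonad}. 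That the complex is a genuine monad, i.e.\ $\alpha$ injective as a sheaf map and $\beta$ surjective, follows respectively from the torsion-freeness of $\E$ and from the vanishing of the $E_2$-contribution in the adjacent total degrees.

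The multiplicities $k_{1},k_{2},k_{3},k_{4}$ are determined by Hirzebruch--Riemann--Roch. Using $\textrm{ch}(\E)=(r,aE,-c-\tfrac{1}{2}na^{2})$, $\textrm{td}(\Sigma_n)$ (with $c_{1}(\Sigma_n)=2H+(n+2)F$ and $E=H-nF$), and the Chern characters of the twisting line bundles, a direct computation gives $\chi(\Sigma_n,\E\otimes L_{p})=\pm h^{1}(\Sigma_n,\E\otimes L_{p})$, reproducing the four values in \eqref{k_i}. The normalisation $0\leq a\leq r-1$ enters precisely to guarantee that each $k_{i}$ is nonnegative and that the vanishings of Step~2 hold simultaneously.

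The principal obstacle is the bookkeeping required to make the $E_{1}$-page collapse exactly to the asserted three-term shape with the specific multiplicities \eqref{k_i}. In particular, one has to explain why $\Uk$ and $\Wk$ are \emph{pure} line bundle sums while $\Vk$ mixes two nonisomorphic summands $\On(1,-1)^{\oplus k_{2}}$ and $\On^{\oplus k_{4}}$; this asymmetry tracks the asymmetric codimension-two Koszul structure of the diagonal resolution on $\Sigma_n$. Controlling simultaneously all the $H^{0}$ and $H^{2}$ vanishings for four different twists of $\E$ — using nothing more than the framing, torsion-freeness, and the prescribed Chern character — is the delicate point that the entire argument hinges upon.
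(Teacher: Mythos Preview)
The paper does not supply a proof of this statement: Theorem~\ref{thmEMon} is quoted verbatim from \cite[Cor.~4.6]{BBR} as part of the survey material in Section~\ref{SecMon}, so there is no argument in the present paper to compare against. That said, your outline is essentially the strategy carried out in the cited reference: a Beilinson-type spectral sequence built from a full strong exceptional collection of line bundles on $\Sigma_n$, with the framing along $\ell_\infty$ and torsion-freeness used to kill the $H^0$ and $H^2$ rows of the $E_1$-page, after which the surviving $H^1$-row gives the monad and Riemann--Roch fixes the multiplicities.

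Two small remarks. First, your formula $c_1(\Sigma_n)=2H+(n+2)F$ is off in the paper's conventions: since $E=H-nF$ one has $-K_{\Sigma_n}=2E+(n+2)F=2H+(2-n)F$; this does not affect the strategy but would matter when you actually carry out the $\chi$-computations for \eqref{k_i}. Second, the sketch glosses over the fact that the monad shape is not automatic from a generic Beilinson page: one needs the specific exceptional collection (and twist) for which the ``diagonal'' resolution has the right Koszul pattern, and one must check that the differentials on the $E_1$-page go the right way so that the three surviving terms sit in consecutive columns. In \cite{BBR} this is handled explicitly, and the vanishing lemmas there (their Lemma~3.1, used repeatedly in the present paper) are exactly what make your Step~2 go through.
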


The set $\Lk$ of pairs in $\Hom(\Uk,\Vk)\oplus\Hom(\Vk,\Wk)$ fitting into the complex  \eqref{fundamentalmonad} and such that the cohomology of the complex is
torsion-free and trivial at infinity is a smooth algebraic variety. 
We now wish to parameterize isomorphism classes of framed sheaves $(\E, \theta)$, with $\E$ isomorphic to the cohomology of \eqref{fundamentalmonad}. To this aim, we first introduce a principal $\GL(r,\Com)$-bundle $\Pk$ over $\Lk$, whose fibre at a point $(\alpha,\beta)$ is  identified with the space of framings for $\E$.
Next we take the action on $\Pk$ of the algebraic group $\Gk=\Aut(\Uk)\times\Aut(\Vk)\times\Aut(\Wk)$. The action is free, and  the quotient $\Pk/\Gk$ is a smooth algebraic variety \cite[Thm~5.1]{BBR}. This variety can be identified with the moduli space $\M^{n}(r,a,c)$ by constructing a universal family. One of the advantages
of the monadic description is the possibility of obtaining a necessary and sufficient condition for the nonemptiness of  $\M^{n}(r,a,c)$.

\begin{thm}\cite[Thm~3.4]{BBR}\label{mainthm}
The moduli space $\M^{n}(r,a,c)$ is nonempty if and only if 
\begin{equation}\label{NEC}
c + \frac{1}{2} na(a-1) \geq 0\,,
\end{equation}
Whenever this condition is satisfied, it is a smooth, irreducible algebraic variety of dimension $2rc + (r-1) na^2$.
\end{thm}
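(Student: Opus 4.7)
The plan is to leverage Theorem~\ref{thmEMon} and the quotient description $\M^{n}(r,a,c)\simeq \Pk/\Gk$ outlined just before the statement; the issue then reduces to properties of $\Lk$ and of the group $\Gk$.

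Necessity of \eqref{NEC} is immediate from Theorem~\ref{thmEMon}: any $(\E,\theta)\in \M^{n}(r,a,c)$ is the cohomology of a monad of the form \eqref{fundamentalmonad}, whose leftmost term is $\Uk=\On(0,-1)^{\oplus k_{1}}$, forcing $k_{1}=c+\tfrac{1}{2}na(a-1)\geq 0$.

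For sufficiency, assume $k_{1}\geq 0$ and exhibit a framed sheaf directly. Pick any length-$k_{1}$ zero-dimensional subscheme $Z\subset \Sigma_{n}\setminus \ell_{\infty}$ and set
\begin{equation*}
\E \;=\; \On^{\oplus r-a-1}\oplus \On(E)^{\oplus a}\oplus I_{Z}\,.
\end{equation*}
Since $H\cdot E=0$, the bundle $\On(E)$ restricts to a degree-zero, hence trivial, line bundle on $\ell_{\infty}\simeq \Pu$; coupled with $Z\cap \ell_{\infty}=\emptyset$, this gives a canonical framing $\theta\colon \E|_{\ell_{\infty}}\toi\On_{\ell_{\infty}}^{\oplus r}$. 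A direct Chern-character computation using $E^{2}=-n$ yields $c_{1}(\E)=aE$ and $\ch_{2}(\E)=-c-\tfrac{1}{2}na^{2}$, so $(\E,\theta)$ represents a point of $\M^{n}(r,a,c)$.

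With nonemptiness established, the remaining claims follow from the presentation $\M^{n}(r,a,c)\simeq \Pk/\Gk$. Smoothness of $\Pk$ and freeness of the $\Gk$-action are \cite[Thm~5.1]{BBR}, whence the quotient is smooth. Since $\Lk$ is a nonempty open subscheme of the affine space $\Hom(\Uk,\Vk)\oplus\Hom(\Vk,\Wk)$ and $\Pk\to \Lk$ is a locally trivial $\GL(r,\Com)$-principal bundle, both $\Lk$ and $\Pk$ are irreducible, and so therefore is $\Pk/\Gk$. Finally,
\begin{equation*}
\dim \M^{n}(r,a,c)\;=\;\dim \Hom(\Uk,\Vk)+\dim \Hom(\Vk,\Wk)+r^{2}-\dim \Gk\,,
\end{equation*}
and each term is a sum of the numbers $h^{0}(\Sigma_{n},\On(p,q))$ for small $p,q$; after substituting the relations \eqref{k_i}, the $k_{1}^{2}$ contributions cancel and the expression collapses to $2rc+(r-1)na^{2}$. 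The main obstacle I expect is precisely this last arithmetic: $\dim \Aut(\Vk)$ must be assembled from the nontrivial off-diagonal groups $\Hom(\On,\On(1,-1))$ and $\Hom(\On(1,-1),\On)$, so a careful combination of cohomology numbers on $\Sigma_n$ is required to see the closed-form answer emerge.
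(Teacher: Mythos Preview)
The paper does not give its own proof of this result; it simply cites \cite[Thm~3.4]{BBR}. So there is nothing to compare against here, and the question is only whether your argument stands on its own.

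The necessity and the explicit construction for sufficiency are fine. The problem is your treatment of irreducibility and of the dimension. You assert that $\Lk$ is a nonempty \emph{open} subscheme of the affine space $\Hom(\Uk,\Vk)\oplus\Hom(\Vk,\Wk)$, and then read off both irreducibility and $\dim\Lk$ from this. But $\Lk$ is \emph{not} open in that space as soon as $k_{1}>0$: the monad condition $\beta\circ\alpha=0$ is a closed condition with values in $\Hom(\Uk,\Wk)$, and only the further requirements ($\alpha$ injective, $\beta$ surjective, cohomology torsion-free and trivial on $\ell_{\infty}$) are open. So $\Lk$ is a locally closed subvariety cut out by genuine equations, and your formula
\[
\dim \M^{n}(r,a,c)\;=\;\dim \Hom(\Uk,\Vk)+\dim \Hom(\Vk,\Wk)+r^{2}-\dim \Gk
\]
over-counts by exactly the codimension imposed by $\beta\alpha=0$. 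To recover the correct number $2rc+(r-1)na^{2}$ you would need, at the very least, to subtract $\dim\Hom(\Uk,\Wk)$ and to prove that the map $(\alpha,\beta)\mapsto\beta\alpha$ is a submersion along $\Lk$ --- this is essentially the smoothness statement of \cite[Thm~5.1]{BBR}, and it does not come for free.

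The same gap invalidates the irreducibility argument: once $\Lk$ sits inside a determinantal-type locus rather than an open piece of affine space, connectedness/irreducibility is no longer automatic and requires a separate argument (in \cite{BBR} this is handled by an explicit analysis). Your proof as written only covers the minimal case $k_{1}=0$, where indeed $\Uk=0$ and $\Lk$ is genuinely open.
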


\subsection{Hilbert schemes of points of $\tot\Ol_{\mathbb{P}^1}(-n)$}
The rank 1 case is especially important. Indeed, from the assumption $r=1$ it follows that $a=0$ (in our normalization), so that one can argue as in the case of $\mathbb{P}^2$ (see Subsection \ref{P2section}), and conclude that
\begin{equation} 
\M^{n}(1,0,c) \simeq \operatorname{Hilb}^c (\Sigma_n \setminus \ell_{\infty}) = \operatorname{Hilb}^c (\tot(\Ol_{\Pu}(-n)))\,.
\end{equation}
Morevover, in this case, the monadic representation gives rise to a genuine ADHM description of the Hilbert scheme of points  $\operatorname{Hilb}^c (\tot(\Ol_{\Pu}(-n)))$, given as follows.

We denote by $P^{n}(c)$ the subset of the vector space $\End(\Com^{c})^{\oplus n+2}\oplus\Hom(\Com^{c},\Com)$ whose points $\left(A_1,A_2;C_1,\dots,C_{n};e\right)$ satisfy the following conditions:
\begin{enumerate}
 \item[(P1)]
\begin{equation*}
\begin{cases}
A_1C_1A_{2}=A_2C_{1}A_{1}&\qquad\text{when $n=1$}\\[15pt]
\begin{aligned}
A_1C_q&=A_2C_{q+1}\\
C_qA_1&=C_{q+1}A_2
\end{aligned}
\qquad\text{for}\quad q=1,\dots,n-1&\qquad\text{when $n>1$;}
\end{cases}
\end{equation*} \smallskip
\item[(P2)]
$A_1+\lambda A_2$ is a \emph{regular pencil} of matrices; equivalently, there exists $[\nu_{1},\nu_{2}]\in\Pu$ such that $\det(\nu_1A_1+\nu_2A_2)\neq0$;\smallskip
\item[(P3)]
for all values of the parameters $\left([\lambda_1,\lambda_2],(\mu_1,\mu_{2})\right)\in\Pu\times\Com^{2}$ such that
\begin{equation*}
\lambda_{1}^{n}\mu_{1}+\lambda_{2}^{n}\mu_{2}=0
\end{equation*}
there is no nonzero vector $v\in\Com^c$ such that
\begin{equation*}
\left\{
\begin{array}{l}
C_{1}A_{2}v=-\mu_1v\\
C_{n}A_{1}v=(-1)^n\mu_2v\\
v\in\ker e
\end{array}\right.
\qquad\text{and}\qquad\left(\lambda_2{A_1}+\lambda_1{A_2}\right)v=0\,.
\end{equation*}\smallskip
\end{enumerate}
The space $P^n(c)$ is a space of ADHM data for our Hilbert schemes; indeed, one has:
\begin{thm} \cite[Thm~3.1]{BBLR}\label{3..1BBLR}
 There is an isomorphism of smooth algebraic varieties between $\M^n(1,0,c)$ and $P^n(c)/(\GL(c,\Com)\times\GL(c,\Com))$, where the group $\GL(c,\Com)\times\GL(c,\Com)$ acts on $P^n(c)$ by means of the formula
\[
 (\phi_1,\phi_2)\cdot(A_i,C_j,e)=(\phi_2A_i\phi_1^{-1},\, 
\phi_1C_j\phi_2^{-1},\,
e\phi_1^{-1})\,.
\]
\end{thm}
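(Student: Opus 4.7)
My plan is to build on Theorem~\ref{thmEMon} specialized to $\vec{k} = (n, 1, 0, c)$, for which $k_{1} = k_{2} = k_{3} = c$ and $k_{4} = c+1$, and then to identify the ADHM quotient $P^{n}(c) / (\GL(c,\Com) \times \GL(c,\Com))$ with $\Pk / \Gk$, which, as recalled in \S\ref{SecMon}, is the moduli space $\M^{n}(1, 0, c)$. First I would choose bases for the relevant spaces of global sections on $\Sigma_{n}$: one has $h^{0}(\On(0,1)) = 2$ and $h^{0}(\On(1,0)) = n+2$, and a natural basis of the latter is obtained by combining the pullbacks of the two homogeneous coordinates on $\Pu$ with $n$ additional sections coming from the summand $\Ol_{\Pu}(-n)$ in the presentation $\Sigma_{n} = \Ps(\Ol_{\Pu} \oplus \Ol_{\Pu}(-n))$. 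With respect to these bases the map $\alpha\colon\Uk\to\Vk$ decomposes into $n + 2$ endomorphisms of $\Com^{c}$ (which will become $A_{1}, A_{2}, C_{1}, \ldots, C_{n}$) together with two $(c+1)\times c$ matrices, while $\beta\colon\Vk\to\Wk$ decomposes analogously into $n+2$ rectangular blocks and two endomorphisms of $\Com^{c}$.

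The second step is to eliminate the superfluous matrix data via the action of $\Gk = \Aut(\Uk)\times\Aut(\Vk)\times\Aut(\Wk)$. The factors $\Aut(\Uk)\simeq\GL(c,\Com)$ and $\Aut(\Wk)\simeq\GL(c,\Com)$ will eventually survive as the $\GL(c,\Com)\times\GL(c,\Com)$-action on $P^{n}(c)$; by contrast, $\Aut(\Vk)$ strictly contains $\GL(c,\Com)\times\GL(c+1,\Com)$, because $\Hom(\On^{\oplus c+1},\On(1,-1)^{\oplus c})$ is nonzero, and this extra symmetry is exactly what is needed to bring the $(c+1)\times c$ blocks in $\alpha$ and the $c\times(c+1)$ blocks in $\beta$ to a canonical form in which all but one row (respectively column) is determined by $A_{1}, A_{2}, C_{1}, \ldots, C_{n}$ themselves. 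After this gauge-fixing, the only remaining rectangular datum is a single row vector $e\in\Hom(\Com^{c},\Com)$, which plays the role of the framing map.

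Expanding the monad equation $\beta\circ\alpha = 0$ in the chosen bases yields a system of quadratic identities among the matrix entries; after the normalization above, these identities collapse precisely to the commutation relations (P1). The sheaf-theoretic conditions that $\alpha$ be injective, that $\beta$ be surjective, and that the cohomology be torsion-free and trivial along $\ell_{\infty}$ then translate into pointwise rank conditions on $\Sigma_{n}$. A direct computation shows that injectivity of $\alpha$ at a generic fibre is equivalent to the regularity of the pencil $\lambda_{1}A_{1}+\lambda_{2}A_{2}$, giving (P2), while surjectivity of $\beta$ at every closed point of $\Sigma_{n}\setminus\ell_{\infty} = \tot\Ol_{\Pu}(-n)$ unfolds exactly into the nonexistence of a nonzero joint eigenvector as in (P3), with the affine parameters $([\lambda_{1},\lambda_{2}],(\mu_{1},\mu_{2}))$ playing the role of local coordinates on $\tot\Ol_{\Pu}(-n)$ subject to the defining relation $\lambda_{1}^{n}\mu_{1}+\lambda_{2}^{n}\mu_{2}=0$.

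The hardest step, I expect, will be this last matching of (P3) with pointwise surjectivity of $\beta$: one has to compute explicitly the fibre of $\beta$ at an arbitrary closed point $x\in\Sigma_{n}\setminus\ell_{\infty}$, express its kernel in terms of $(A_{1}, A_{2}, C_{1}, \ldots, C_{n}, e)$, and check that its vanishing for every such $x$ coincides exactly with the stated parametric condition; a certain amount of bookkeeping is also required near $\ell_{\infty}$ to ensure that the framing is compatible with the gauge-fixing. Once this verification is in place, the $\Gk$-action on the reduced monad data descends to the $\GL(c,\Com)\times\GL(c,\Com)$-action on $P^{n}(c)$ displayed in the statement, and the desired isomorphism $\M^{n}(1,0,c)\simeq P^{n}(c)/(\GL(c,\Com)\times\GL(c,\Com))$ follows from the identification $\M^{n}(1,0,c)\simeq\Pk/\Gk$ recalled in \S\ref{SecMon}.
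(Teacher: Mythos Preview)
The present paper does not actually prove this theorem: it is quoted verbatim from \cite[Thm~3.1]{BBLR}, and Section~\ref{SecMon} is explicitly a summary of results established in \cite{BBR, BBLR}. So there is no ``paper's own proof'' to compare against here.

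That said, your strategy is exactly the one carried out in \cite{BBLR} and is entirely consistent with the machinery the paper sets up in \S\ref{SecMon}: specialize the monad of Theorem~\ref{thmEMon} to $\vec{k}=(n,1,0,c)$, write $\alpha$ and $\beta$ as matrices in explicit bases of $H^{0}(\On(0,1))$ and $H^{0}(\On(1,0))$, use the unipotent part of $\Aut(\Vk)$ to gauge-fix the off-diagonal blocks, and then read off (P1) from $\beta\alpha=0$, (P2) from the triviality at $\ell_\infty$ (invertibility of $\Phi$), and (P3) from the requirement that the cohomology be torsion-free away from $\ell_\infty$. The residual symmetry $\GL(c,\Com)\times\GL(c,\Com)$ is precisely $\Aut(\Uk)\times\Aut(\Wk)$, and the identification with $\Pk/\Gk$ then follows from the reduction-of-structure-group argument (the same mechanism formalized in Lemma~\ref{lemmaquot} and used in \S\ref{reductionstructuregroup}).

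One small caution: your attribution of (P2) and (P3) to, respectively, generic injectivity of $\alpha$ and pointwise surjectivity of $\beta$ is slightly misaligned with how these conditions actually arise. Condition (P2) encodes the invertibility of the restriction map $\Phi=H^{0}(\beta|_{\ell_\infty}(-1))$ (triviality along $\ell_\infty$), while (P3) is the condition that $\alpha$ be fiberwise injective over $\Sigma_n\setminus\ell_\infty$, which is what guarantees torsion-freeness of the cohomology; the presence of $e$ in (P3) reflects the $\On^{\oplus k_4}$ summand of $\Vk$, not the framing map as such. This does not affect the soundness of your plan, but when you carry out the computation you will want to keep straight which open condition comes from which piece of the monad data.
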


Notice that, as shown in \cite{BBLR}, there is an open cover of $\operatorname{Hilb}^c(\operatorname{Tot}\mathcal{O}_{\mathbb{P}^1}(-n))$, whose elements are all isomorphic to $\operatorname{Hilb}^c(\mathbb{C}^2)$: as a matter of fact, the restriction of our ADHM data to these open sets coincides with Nakajima's ADHM data for $\operatorname{Hilb}^c(\mathbb{C}^2)$.

\begin{rem}\label{hilbertasquiver} By relying on the ADHM description given in Theorem \ref{3..1BBLR}, it is possible to prove that the spaces
$\operatorname{Hilb}^c(\operatorname{Tot}\mathcal{O}_{\mathbb{P}^1}(-n))$ 
 can be embedded, as irreducible connected components, into moduli spaces of semistable representations of suitable quotients of the path algebras of the GF quivers
 \begin{equation} \label{eqQnfr}
 \xymatrix@R-2.3em{
&\mbox{\scriptsize$0$}&&\mbox{\scriptsize$1$}\\
&\bullet\ar@/_1ex/[ldd]_{j}\ar@/^/[rr]^{a_{1}}\ar@/^4ex/[rr]^{a_{2}}&&\bullet\ar@/^10pt/[ll]^{c_{1}} \\ \\
\mbox{\scriptsize$0'$}\ \ \bullet\ \ &&&\\
&&\\ &&\mbox{\framebox[1cm]{\begin{minipage}{1cm}\centering $n=1$\end{minipage}}}
}
\qquad\qquad
  \xymatrix@R-2.3em{
&\mbox{\scriptsize$0$}&&\mbox{\scriptsize$1$}\\
&\bullet\ar@/_3ex/[ldddd]_{j}\ar@/^/[rr]^{a_{1}}\ar@/^4ex/[rr]^{a_{2}}&&\bullet\ar@/^/[ll]^{c_{1}}
\ar@/^4ex/[ll]^{c_2} \\ \\ \\&&&&& \\ 
\mbox{\scriptsize$0'$}\ \ \bullet\ \ar@/_12pt/[ruuuu]_{i_1}&&&&& \\
&&& \mbox{\framebox[1cm]{\begin{minipage}{1cm}\centering $n=2$\end{minipage}}}
}
\end{equation}
\begin{equation}
  \xymatrix@R-2.3em{
&&\mbox{\scriptsize$0$}&&\mbox{\scriptsize$1$}\\
&&\bullet\ar@/_3ex/[llddddddd]_{j}\ar@/^/[rr]^{a_{1}}\ar@/^4ex/[rr]^{a_{2}}&&\bullet\ar@/^/[ll]^{c_{1}}
\ar@/^4ex/[ll]^{c_2}\ar@/^7ex/[ll]^{\ell_1}\ar@{..}@/^10ex/[ll]\ar@{..}@/^11ex/[ll]
\ar@/^12ex/[ll]^{\ell_{n-2}}& \\ \\ \\ \\ \\&&&&&\mbox{\framebox[1cm]{\begin{minipage}{1cm}\centering $n\geq3$\end{minipage}}} \\ \\
\bullet\ar@/_/[rruuuuuuu]^{i_1}\ar@/_3ex/[rruuuuuuu]_{i_2}\ar@{..}@/_6ex/[rruuuuuuu]\ar@{..}@/_8ex/[rruuuuuuu]\ar@/_10ex/[rruuuuuuu]_{i_{n-1}}&&&&&\\
\mbox{\scriptsize\phantom{'}$0'$}&&&&
}
\end{equation}
For precise definitions and statements we refer to \cite[\S 4]{BBLR} (see in particular Theorem 4.4, {\em loc.~cit.}).

Except for the case $n=2$, the Hilbert scheme $\operatorname{Hilb}^c(\operatorname{Tot}\mathcal{O}_{\mathbb{P}^1}(-n))$ is not  a holomorphic symplectic variety. However, it carries a natural Poisson structure induced by the Poisson bivector defined on the Hirzebruch surface $\Sigma_n$, as follows from Bottacin's general results  \cite{Bot}.
It seems to be an interesting and challenging problem to characterise this Poisson structure in purely quiver-theoretic terms, possibly by resorting
to the noncommutative notion of ``double Poisson bracket'' on the path algebra of a quiver (see \cite{VDB, CrBo, BIE}).
\end{rem}

\bigskip
\section{The minimal case}\label{sectionminimalcase}

In this section we give a complete description of the moduli spaces  $\M^{n}(r,a,c)$ when the minimality condition stated in Theorem
\ref{mainthm}, namely
\begin{equation}\label{eq:min}
 c= C_{\text{m}}(n,a) =\frac{1}{2}na(1-a)\,,
\end{equation}
is satisfied. In the following we will simply write  $C_{\text{m}}$ instead of $C_{\text{m}}(n,a)$, since no ambiguity is likely to arise.

\begin{rem}
If $\E$ is a torsion-free sheaf on $\Sigma_{n}$ which is trivial at infinity and satisfies the ``minimality'' condition \eqref{eq:min}, then $\E$ is locally free. This can be deduced from the canonical injection $\E\rightarrowtail\E^{**}$ (but it is also a consequence of the monadic description \eqref{eqMinimalMonad} below).
\end{rem}

\begin{thm}\label{thm:minimal}
 There are isomorphisms
\begin{equation}
 \mathcal{M}^n\left(r,a,C_{\mathrm{m}} \right)\simeq
  \begin{cases}
   \operatorname{Gr}(a,r)&\text{if $n=1$;}\\
   T^\vee\operatorname{Gr}(a,r)^{\oplus n-1}&\text{if $n\geq2$,}
  \end{cases}
\label{eqIsoM}
\end{equation}
where $\operatorname{Gr}(a,r)$ is the Grassmannian of $a$-planes in $\Com^r$.
\end{thm}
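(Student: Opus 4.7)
The proof splits naturally according to the value of $n$. I would begin by specializing Theorem~\ref{thmEMon} to the minimal case $c = C_{\mathrm{m}}$: formula \eqref{k_i} immediately gives $k_1 = 0$, so $\Uk = 0$ and the monad \eqref{fundamentalmonad} collapses to a short exact sequence
$$0 \to \mathcal{E} \to \Vk \xrightarrow{\beta} \Wk \to 0$$
with $\beta$ surjective, where $\Vk = \On(1,-1)^{\oplus na} \oplus \On^{\oplus (r-a)}$ and $\Wk = \On(1,0)^{\oplus (n-1)a}$. In particular every such $\mathcal{E}$ is locally free, and the moduli space is realized as the quotient $\Pk/\Gk$, where $\Pk$ parameterizes pairs (surjective $\beta$, framing $\theta$) and $\Gk = \Aut(\Vk) \times \Aut(\Wk)$.

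When $n = 1$ we have $(n-1)a = 0$, so $\Wk = 0$ and $\mathcal{E} \cong \Vk = \mathcal{O}_{\Sigma_1}(1,-1)^{\oplus a} \oplus \mathcal{O}_{\Sigma_1}^{\oplus (r-a)}$ is rigid. Thus the moduli space reduces to $\Iso(\mathcal{E}|_{\li}, \Ol_{\li}^{\oplus r})/\Aut(\mathcal{E})$. Using $\Hom(\mathcal{O}_{\Sigma_1}(1,-1), \mathcal{O}_{\Sigma_1}) = 0$ and $\Hom(\mathcal{O}_{\Sigma_1}, \mathcal{O}_{\Sigma_1}(1,-1)) = H^0(\mathcal{O}_{\Sigma_1}(E)) \cong \mathbb{C}$, one checks that $\Aut(\mathcal{E})$ is block upper triangular with constant entries and that its image in $\Aut(\mathcal{E}|_{\li}) \cong GL(r,\mathbb{C})$ is the parabolic subgroup $P$ stabilizing the subspace $\mathbb{C}^a \oplus \{0\}$. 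Hence $\mathcal{M}^1(r, a, C_{\mathrm{m}}) \cong GL(r,\mathbb{C})/P \cong \operatorname{Gr}(a, r)$; explicitly, the isomorphism sends $(\mathcal{E}, \theta)$ to $\ker\bigl(\mathbb{C}^r \xrightarrow{\theta^{-1}} H^0(\mathcal{E}|_{\li}) \twoheadrightarrow \mathbb{C}^{r-a}\bigr)$, the last map induced by the projection of $\Vk$ onto its trivial summand.

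For $n \geq 2$ the same recipe defines a morphism $\pi: \mathcal{M}^n(r, a, C_{\mathrm{m}}) \to \operatorname{Gr}(a, r)$, and the central task is to identify $\pi$ with a vector bundle isomorphic to $T^\vee\operatorname{Gr}(a, r)^{\oplus (n-1)}$. Writing $\beta = (\beta_1, \beta_2)$ with $\beta_1 \in \Hom(\On(1,-1)^{\oplus na}, \On(1,0)^{\oplus (n-1)a})$ and $\beta_2 \in \Hom(\On^{\oplus (r-a)}, \On(1,0)^{\oplus (n-1)a})$, the key algebraic observation is that $H^0(\On(1,0))$ decomposes as a $1$-dimensional piece generated by the defining section $z_0$ of $\li$ plus the $(n+1)$-dimensional image of the multiplication $H^0(\On(0,1)) \otimes H^0(\On(1,-1)) \to H^0(\On(1,0))$. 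The off-diagonal unipotent block of $\Aut(\Vk)$ (of dimension $n^2 a(r-a)$, coming from $\Hom(\On^{\oplus(r-a)}, \On(1,-1)^{\oplus na})$) acts on $\beta_2$ precisely by shifts of the form $\beta_2 \mapsto \beta_2 - \beta_1 B$, which for generic $\beta_1$ absorbs exactly the image-of-multiplication part of $\beta_2$; after further normalizing by the $GL$-factors and matching the framing, the residual $z_0$-coefficient of $\beta_2$ is a $(n-1)a \times (r-a)$ constant matrix that decomposes naturally into $(n-1)$ blocks in $\Hom(\mathbb{C}^r/V, V) = T^\vee_{[V]}\operatorname{Gr}(a, r)$, yielding the claimed fiber structure.

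The main technical obstacle is upgrading the pointwise identification of fibers into a genuine isomorphism of vector bundles. This requires either producing explicit local trivializations over the standard affine charts of $\operatorname{Gr}(a, r)$ and verifying that the transition functions are linear of the expected shape, or giving an intrinsic argument using the natural Poisson/symplectic structure on $\mathcal{M}^n(r, a, C_{\mathrm{m}})$ inherited from the Poisson surface $\Sigma_n$. A consistency dimension check $\dim \mathcal{M}^n(r, a, C_{\mathrm{m}}) = na(r-a) = a(r-a) + (n-1)a(r-a)$ confirms that the claim is at least numerically plausible.
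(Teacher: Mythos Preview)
Your treatment of the case $n=1$ is correct and coincides with the paper's: both identify $\Aut(\Vk)$ with the maximal parabolic $\GL(a,r)\subset\GL(r)$ and conclude $\M^{1}(r,a,C_{\mathrm m})\simeq\GL(r)/\GL(a,r)\simeq\Gr(a,r)$.

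For $n\geq 2$ your outline captures the right mechanism---normalizing $\beta$ via the $\Gk$-action so that only the $s_\infty$-component of $\beta_2$ survives---but, as you yourself flag, it stops at a fiberwise identification and leaves the passage to a global isomorphism of varieties unresolved. That is the genuine gap: neither of the two completions you suggest (chart-by-chart trivialization, or a Poisson-geometric argument) is carried out, and the second would in any case be circular, since the Poisson structure on $\M^{n}(r,a,C_{\mathrm m})$ for $n\neq 2$ is not yet constructed in the paper.

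The paper closes this gap by a reduction-of-structure-group argument, avoiding charts entirely. One writes down an explicit closed immersion
\[
j\colon \Hom(\Com^{r-a},\Com^{a})^{\oplus n-1}\times\GL(r)\;\hookrightarrow\;\Pk
\]
(taking $(b_1,\dots,b_{n-1},\theta)$ to the pair $(\beta,\xi)$ with $\beta_{10},\beta_{11}$ in a fixed normal form, $\beta_{2q}=0$ for $q\leq n$, $\beta_{2,n+1}$ the column of the $b_q$'s, and $\xi$ determined by $\theta$), together with an embedding $\iota\colon\GL(a,r)\hookrightarrow\Gk$ making $j$ equivariant. A short orbit computation (Lemma~\ref{eqj-red}) shows that every $\Gk$-orbit in $\Pk$ meets $\im j$, and that the stabilizer in $\Gk$ of $\im j$ is exactly $\im\iota$; this is precisely your normalization procedure, made systematic. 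A general slice lemma (Lemma~\ref{lemmaquot}) then converts these two conditions directly into an isomorphism of quotient varieties
\[
\bigl(\Hom(\Com^{r-a},\Com^{a})^{\oplus n-1}\times\GL(r)\bigr)\big/\GL(a,r)\;\simeq\;\Pk/\Gk\;=\;\M^{n}(r,a,C_{\mathrm m}),
\]
and the left-hand side is $T^\vee\Gr(a,r)^{\oplus n-1}$ by the standard identification~\eqref{eq:iso}. So the missing ingredient in your proposal is not a new idea but the abstract statement that ``every orbit meets the slice, with stabilizer the small group'' already forces the quotients to agree as varieties.
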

The next three Subsections are devoted to prove Theorem \ref{thm:minimal}.
\subsection{The monad in the minimal case.}
Condition \eqref{eq:min} implies $\Uk=0$, so that the monad \eqref{fundamentalmonad} reduces to the complex
\begin{equation}
\xymatrix{
 0\ar[r]&\Ol_{\Sigma_n}(1,-1)^{\oplus na}\oplus\Ol_{\Sigma_n}^{\oplus r-a}\ar[r]^-{\beta}&\Ol_{\Sigma_n}(1,0)^{\oplus (n-1)a}\ar[r]&0\,,
}
\label{eqMinimalMonad}
\end{equation}
whose cohomology sheaf is just the kernel of $\beta$.  To ensure that this sheaf is trivial at infinity, one has also to impose the invertibility of the linear map $\Phi = H^0(\beta|_{\ell_\infty}(-1))$ (see \cite[\S 3]{BBLR}).

We denote by $\GL(a,r)$ the maximal parabolic subgroup of $\GL(r)$ consisting of upper block triangular matrices of the form
 \[
  \begin{pmatrix}
   A&B\\
   0&C
  \end{pmatrix}\,,\qquad\text{where}\quad
  \begin{cases}
   A\in\operatorname{Mat}_\Com(a\times a)&\\
   B\in\operatorname{Mat}_\Com(a\times(r-a))\\
   C\in\operatorname{Mat}_\Com((r-a)\times (r-a))\,.
  \end{cases}
 \]

First of all, notice that, if $n=1$ or $a=0$, then $\Wk=0$, and $\Phi$ is the identity (i.e.~zero) morphism between null vector spaces: this implies that the variety $\Lk$ reduces to a point, and consequently $\Pk=\GL(r)$. To describe the corresponding moduli spaces we have to compute the automorphism group of $\Vk$:
\begin{itemize}
 \item for $n=1$, $\Vk=\Ol_{\Sigma_1}(1,-1)^{\oplus a}\oplus\Ol_{\Sigma_1}^{\oplus r-a}$; therefore,
$$ \Aut(\Vk)\simeq\GL(a,r)\ \ \text{and}\ \  
 \M^1\left(r,a,a(1-a)/2\right)\simeq \GL(r)/\GL(a,r)\simeq\operatorname{Gr}(a,r)\,;
$$
 \item for $a=0$, $\Vk=\Ol_{\Sigma_n}^{\oplus r}$; therefore,
  $$  \Aut(\Vk)\simeq\GL(r)  \ \ \text{and}\ \  
  \M^n\left(r,0,0\right)\simeq\GL(r)/\GL(r)=\{\ast\}=\operatorname{Gr}(0,r)\,.
$$
\end{itemize}

Let us now assume $n\geq2$ and $a\geq 1$. To prove Theorem \ref{thm:minimal} we make use of the well-known isomorphism
\begin{equation}\label{eq:iso}
 T^\vee\operatorname{Gr}(a,r)^{\oplus n-1}\simeq \left (\Hom(\Com^{r-a},\Com^a)^{\oplus n-1}\times\GL(r)\right )/\GL(a,r)\,,
\end{equation}
where the $\GL(a,r)$-action is given by:
\begin{align*}
 \begin{pmatrix}
  A&B\\
  0&C
 \end{pmatrix}\cdot b&=AbC^{-1}&\text{for $b\in\Hom(\Com^{r-a},\Com^a)$,}\\
  \begin{pmatrix}
  A&B\\
  0&C
 \end{pmatrix}\cdot \theta&=\theta\begin{pmatrix}
  A&B\\
  0&C
 \end{pmatrix}^{-1}&\text{for $\theta\in\GL(r)$.}
\end{align*}

\smallskip
A more explicit description of the bundle $\Pk$ can be obtained by the following procedure.\\
{\bf 1)} 
 We represent the morphism $\beta$ by a matrix. To this aim, we have to choose a basis for the vector space
 \[
  \Hom(\Com^{na},\Com^{(n-1)a})\otimes H^0(\Ol_{\Sigma_n}(0,1))\oplus\Hom(\Com^{r-a},\Com^{(n-1)a})\otimes H^0(\Ol_{\Sigma_n}(1,0))\,.
 \]
We take homogeneous coordinates $[y_1:y_2]$ on $\Pu$ and pull them back to $\Sigma_n$ by means of the canonical projection $\pi\colon\Sigma_n\to\Pu$; the 
elements $\{y_2^qy_1^{h-q}\}_{q=0}^h$ form a basis for the vector space $H^0(\Ol_{\Sigma_n}(0,h))$, for all $h\geq1$. We denote by $s_E$ the unique (up to homotheties) global section of $\Ol_{\Sigma_n}(E)$ and by $s_\infty$ the section of $\Ol_{\Sigma_n}(1,0)$ whose vanishing locus is $\ell_\infty$. The multiplication by $s_E$ induces an immersion
\[
\xymatrix{
  \Ol_{\Sigma_n}(0,n)\ar@{>}[r]&\Ol_{\Sigma_n}(1,0)\,,
 }
\]
so that the set $\{y_2^qy_1^{n-q}s_E\}_{q=0}^{n}\cup\{s_\infty\}$ is a basis for the space $H^0(\Ol_{\Sigma_n}(1,0))$. According to these choices,  the morphism 
$\beta$ is represented by a matrix of the form
\begin{equation}
\begin{pmatrix} \beta_{1} & \beta_{2} \end{pmatrix}= \left(\beta_{10}y_1+\beta_{11}y_2\quad\sum_{q=0}^n\beta_{2q}(y_2^qy_1^{n-q}s_E)+\beta_{2,n+1}s_\infty\right)\,,
\label{eqBeta}
\end{equation}
whilst an automorphism $\psi\in\Aut(\Vk)$ is represented by a matrix of the form
\begin{equation}
\psi=
\begin{pmatrix}
\psi_{11} & \psi_{12}\\
0 & \psi_{11}
\end{pmatrix}=
\begin{pmatrix}
\psi_{11} & \sum_{q=0}^{n-1}\psi_{12,q}(y_2^qy_1^{n-1-q}s_E)\\
0 & \psi_{22}
\end{pmatrix}\,.
\end{equation}
{\bf 2)} 
 The morphism $\Phi$  is represented by an $n(n-1)a\times n(n-1)a$ matrix , whose only nonvanishing  terms are $\beta_{10},\beta_{11}\in\operatorname{Mat}_\Com((n-1)a\times na)$:
\begin{equation}
 \Phi=
 \begin{pmatrix}
  \beta_{10}&&\\
  \beta_{11}&\beta_{10}&\\
  &\beta_{11}&\ddots&\\
  &&\ddots&\beta_{10}\\
  &&&\beta_{11}
 \end{pmatrix}\,.
\label{eqPhi}
\end{equation}
{\bf 3)} A framing for the kernel of $\beta$ is provided by the choice of a basis for $H^0(\ker\beta|_{\ell_\infty})=\ker H^0(\beta|_{\ell_\infty})$. In other words, it is given by an injective linear map
\[
 \xi\colon\Com^r\to H^0(\Vk|_{\ell_{\infty}})\quad\text{such that}\quad H^0(\beta|_{\ell_\infty})\circ\xi=0\,.
\]
Summing up, {\em the bundle $\Pk$ can be described as the set of pairs $(\beta,\xi)$ as above such $\det\Phi\neq 0$}. 

The group $\Gk=\Aut\Vk\times\Aut\Wk$ acts on $\Pk$  as follows:
\[
 (\psi,\chi)\cdot(\beta,\xi)=(\chi\circ\beta\circ\psi^{-1},H^0(\psi|_{\ell_\infty})\circ\xi)\,.
\]

\subsection{A technical Lemma}
Let  $X$ be a smooth algebraic variety over $\Com$ and $G$ a complex affine algebraic group acting on $X$; let 
$\gamma\colon X\times G \to  X\times X$ be the induced morphism given by $(x, g) \mapsto (x, g\cdot x)$.
 The set-theoretical quotient $X/G$ has a natural structure of ringed space, whose topology is the quotient topology induced by the canonical projection $q\colon X\lra X/G$
 and whose structure sheaf is the sheaf of $G$-invariant functions.
 If the action is free and $\gamma$ is a closed immersion,  then $X/G$ is a smooth algebraic variety, the pair $(X/G,q)$ is a geometric quotient of $X$ modulo $G$, and $X$ is a (locally isotrivial) principal $G$-bundle over $X/G$. This can be proved by arguing as in the proof of  \cite[Theorem~5.1]{BBR}.

Let $Y$ be a smooth closed subvariety of $X$ and let $H\stackrel{\iota}{\hookrightarrow}G$ be a closed subgroup of $G$. Assume that $H$ acts on $Y$ and that the inclusion $j\colon Y \hookrightarrow X$ is $H$-equivariant.  We denote by  $p\colon Y\lra Y/H$ the canonical projection.
 \begin{lemma}
 \label{lemmaquot}
 If the intersection of $\im j$ with every $G$-orbit in $X$ is nonempty and,  
for any $G$-orbit $O_{G}$ in $X$, one has $\Stab_{G}(O_{G}\cap\im j)=\im\iota$,
then $j$ induces an isomorphism $\bar{\jmath}\colon Y/H \longrightarrow X/G$ of algebraic varieties.
\end{lemma}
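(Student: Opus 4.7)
The plan is to first construct $\bar{\jmath}$ as a morphism via a universal property, then verify it is a bijection on closed points, and finally upgrade this to an isomorphism of algebraic varieties.

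First I would observe that $q\circ j: Y \to X/G$ is $H$-invariant: for $y \in Y$ and $h \in H$, one has $q(j(h\cdot y)) = q(\iota(h)\cdot j(y)) = q(j(y))$ because $j$ is $H$-equivariant and $q$ is $G$-invariant. By the discussion preceding the Lemma, $(Y/H,p)$ is the geometric quotient of $Y$ by the free $H$-action, so its universal property provides a unique morphism $\bar{\jmath}: Y/H \to X/G$ with $\bar{\jmath}\circ p = q\circ j$.

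Next I would check that $\bar{\jmath}$ is bijective on closed points. Surjectivity is the first hypothesis verbatim: every $G$-orbit $O_G$ meets $\im j$, so every class in $X/G$ is hit. For injectivity, suppose $\bar{\jmath}(p(y_1)) = \bar{\jmath}(p(y_2))$; then $j(y_1)$ and $j(y_2)$ lie in a common $G$-orbit $O_G$ and hence both in $S := O_G \cap \im j$. The set $S$ is $H$-invariant. Identifying $O_G$ with $G$ by means of the free $G$-action, $S$ becomes an $H$-stable subset of $G$ (under left translation) whose setwise $G$-stabilizer is $\im\iota$; this forces $S$ to be a single left $H$-coset, so $y_2 = h\cdot y_1$ for some $h\in H$ and therefore $p(y_1)=p(y_2)$.

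Finally I would promote this bijection to an isomorphism of varieties. Since $X/G$ and $Y/H$ are smooth by the principle recalled just before the Lemma, it suffices to check that $\bar{\jmath}$ is étale, after which bijectivity plus Zariski's Main Theorem (applied to the quasi-finite separated morphism $\bar{\jmath}$ into a normal target) yields an isomorphism. At $y \in Y$ with $x = j(y)$ one has the two exact sequences
\[
0 \to T_y(H\cdot y) \to T_y Y \to T_{p(y)}(Y/H) \to 0, \qquad 0 \to T_x(G\cdot x) \to T_x X \to T_{q(x)}(X/G) \to 0,
\]
and $d\bar{\jmath}$ is induced by $dj$. The key infinitesimal input is transversality: $dj(T_y Y) + T_x(G\cdot x) = T_x X$ and $dj(T_y Y) \cap T_x(G\cdot x) = dj(T_y(H\cdot y))$. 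This follows from the stabilizer hypothesis, which asserts that near $y$ the image of $j$ meets the orbit $G\cdot x$ in the single $H$-orbit $H\cdot y$ of dimension $\dim H$, matching the codimension of $T_x(G\cdot x)/T_x(H\cdot x)$ inside $T_x X/dj(T_yY)$.

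The main obstacle is this final transversality step: hypothesis (2) is purely set-theoretic, and converting it into the infinitesimal transversality required to ensure that $d\bar{\jmath}$ is an isomorphism at every point is the delicate link in the argument. Once transversality is in hand, étaleness combined with the bijectivity already established and the smoothness/quasi-projectivity of both quotients yields that $\bar{\jmath}$ is an isomorphism of algebraic varieties.
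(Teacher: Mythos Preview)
Your approach---construct $\bar\jmath$ by the universal property, check bijectivity on points, then prove \'etaleness---is quite different from the paper's, which works affine-locally and shows directly that $j^\sharp$ restricts to a ring isomorphism $A^G\simeq B^H$ of invariants. Two of your steps, however, do not go through. In Step~2, the assertion that $\Stab_G(S)=\im\iota$ forces $S=O_G\cap\im j$ to be a single $H$-coset is unjustified and in fact false: take $X=G=(\mathbb{C}^*)^2$ acting on itself by translation, $H=\mathbb{C}^*\times\{1\}$, and $Y=\mathbb{C}^*\times\{1,2\}$; then $\Stab_G(Y)=H$ while $Y$ consists of two $H$-orbits, so $Y/H$ has two points and $X/G$ one. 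This actually shows that the Lemma's second hypothesis, as literally written, is too weak; what is really needed---and what is directly verified in the paper's application---is that each $O_G\cap\im j$ be a single $H$-orbit.

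The deeper gap is Step~3, which you rightly flag as the ``delicate link'' but do not resolve. Even granting that $O_G\cap\im j$ is set-theoretically a single $H$-orbit, nothing prevents $\im j$ from being tangent to $G\cdot x$ along that orbit, and then $d\bar\jmath$ would fail to be surjective; a purely set-theoretic hypothesis cannot be upgraded to infinitesimal transversality without further input, so the \'etaleness argument cannot be completed from the stated data. The paper's invariant-ring route sidesteps this entirely: once each $O_G\cap\im j$ is known to be one $H$-orbit, every $H$-invariant regular function $g$ on $W=j^{-1}(V)$ extends to a $G$-invariant $f$ on $V$ by the well-defined rule $f(\gamma\cdot j(y))=g(y)$, injectivity of $A^G\to B^H$ follows from the first hypothesis, and no tangent-space computation is needed.
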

\begin{proof}
By \cite[Prop.~0.7]{Mum}   the morphism $q$ is affine. Hence, if  $U\subset X/G$ is an open affine subset,  $V=q^{-1}(U)$ is  affine as well; if we set $V=\Spec A$, then $U= \Spec(A^{G})$, and the restricted morphism $q|_{V}$ is induced by the canonical immersion $q^{\sharp}\colon A ^{G}\hookrightarrow A$. 
Since $j$ is an affine morphism \cite[Prop.~1.6.2.(i)]{EGA2}, the counterimage $W=j^{-1}(V)$ is affine, $W=\Spec B$, and by the equivariance of $j$ it is $H$-invariant. It follows that its image $p(W)=\Spec(B^{H})$ is affine and the restricted morphism $p|_{W}$ is induced by the canonical immersion $p^{\sharp}\colon B^{H}\hookrightarrow B$. Let $j^{\sharp}\colon A \longrightarrow B $ be the ring homomorphism determined by $j$.  It is not difficult to check
that $q^{\sharp} \circ j^{\sharp}$ is an isomorphism onto $p^{\sharp}(B^H) \subset B$. So $j^{\sharp}$ induces a ring isomorphism 
$\bar{\jmath}\,^{\sharp} \colon A^G \longrightarrow B^H$. This shows that $\bar{\jmath}\colon Y/H \longrightarrow X/G$ is an isomorphism.
\end{proof}

\begin{cor}
Under the previous hypotheses, the principal $H$-bundle $p\colon Y\longrightarrow Y/H$ is a reduction of the principal $G$-bundle $q\colon X\lra X/G$.
In particular, if $X \longrightarrow X/G$  is locally trivial, then $Y\longrightarrow Y/H$ is locally trivial as well.
\end{cor}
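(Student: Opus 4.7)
The conclusion packages two facts: that $p : Y \to Y/H$ is itself a principal $H$-bundle, and that it realises a reduction of structure group of $q : X \to X/G$ along $\iota$. The first is inherited from the corresponding structure on $X$: the $H$-action on $Y$ is free, being the restriction of the free $G$-action on $X$ via the $H$-equivariant closed immersion $j$; and the orbit morphism $\gamma_{H} : Y \times H \to Y \times Y$ is a closed immersion, being the pullback of $\gamma$ along $j \times \iota$. The criterion recalled just before Lemma~\ref{lemmaquot} then gives that $p$ is a principal $H$-bundle.

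For the reduction property, Lemma~\ref{lemmaquot} identifies the base spaces via $\bar\jmath$, so it remains to verify that each $G$-fibre of $q$ meets $\im j$ in a single $H$-orbit. This is precisely the set-theoretic content of $\bar\jmath$ being a bijection: surjectivity records nonemptiness of every intersection (the first standing hypothesis of Lemma~\ref{lemmaquot}), while injectivity says exactly that two points of $Y$ lying in the same $G$-orbit necessarily lie in the same $H$-orbit. Together with the $H$-equivariance of $j$, this identifies $p : Y \to Y/H$ as an $H$-subbundle of $q : X \to X/G$, i.e.\ as a reduction of its structure group from $G$ to $H$.

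For the local triviality assertion, let $\phi : q^{-1}(U) \xrightarrow{\sim} U \times G$ trivialise $q$ over an open $U \subset X/G$, and set $V := \bar\jmath^{-1}(U) \subset Y/H$. Under $\phi$, the subvariety $p^{-1}(V) = j^{-1}(q^{-1}(U))$ is carried to a closed $H$-invariant subvariety $Z \subset U \times G$ whose fibre over each $u \in U$ is, by the reduction property just established, a single right $H$-coset in $\{u\} \times G$. Such a $Z$ is the fibre product $U \times_{G/H} G$ along a classifying morphism $\tau : U \to G/H$, and so $p^{-1}(V) \to V$ is realised as the pullback of the principal $H$-bundle $G \to G/H$ along $\tau$. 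The main subtle point is therefore the local triviality of the quotient $G \to G/H$; this holds Zariski-locally in the applications envisaged in \S\ref{sectionminimalcase} (where $G$ is a product of general linear groups and $H$ a maximal parabolic, so $G/H$ is a product of Grassmannians), and in the abstract statement is to be read in the topology compatible with the given principal bundle structure on $X$.
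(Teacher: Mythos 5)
The paper itself offers no proof of this corollary: it is presented as an immediate consequence of Lemma~\ref{lemmaquot} together with the criterion recalled just before it (free action plus $\gamma$ a closed immersion implies geometric quotient and principal bundle). Your proposal therefore supplies an argument the authors left implicit, and its three steps are the right ones. Two remarks. First, the square relating $\gamma_{H}$ to $\gamma$ is a pullback along $j\times j$ (not along $j\times\iota$, which maps into the \emph{source} of $\gamma$), and its being Cartesian is not automatic: it encodes exactly the hypothesis that each $G$-orbit meets $\im j$ in a single $\im\iota$-orbit. One can sidestep the fibre product entirely: $(j\times j)\circ\gamma_{H}=\gamma\circ(j\times\iota)$ is a composite of closed immersions, and since $j\times j$ is separated, the cancellation property for closed immersions already yields that $\gamma_{H}$ is a closed immersion. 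Second, the final step is the one place where the statement, read literally in the Zariski topology for arbitrary $H\subset G$, genuinely needs an input beyond the stated hypotheses: a reduction to $H$ of a Zariski-locally trivial $G$-bundle is Zariski-locally trivial only if $G\lra G/H$ admits Zariski-local sections. This is a real restriction, not a formality --- taking $G=\mathbb{G}_{m}$ acting by translation on the second factor of $X=\mathbb{G}_{m}\times\mathbb{G}_{m}$, $H=\mu_{2}$ and $Y=\{(t,s):s^{2}=t\}$ satisfies all hypotheses of Lemma~\ref{lemmaquot}, yet $Y\lra Y/H$ is the square map on $\mathbb{G}_{m}$, which is not Zariski-locally trivial. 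You flag this correctly: in the application of Subsection~\ref{reductionstructuregroup} the subgroup $\GL(a,r)$ is special in Serre's sense (an extension of $\GL(a)\times\GL(r-a)$ by a unipotent group), so every torsor under it is Zariski-locally trivial and the issue disappears; for the abstract statement the safe reading is the ``locally isotrivial'' one already used by the authors in the paragraph preceding the lemma. With these caveats your proof is complete and, if anything, more careful than the text.
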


\subsection{Reduction of the structure group.}\label{reductionstructuregroup}
The proof of Theorem \ref{thm:minimal} is based on the reduction of the structure group $\Gk$ of the principal bundle $\Pk \lra \mathcal{M}^n(r,a,C_{\mathrm{m}})$ to the group $\GL(a,r)$. To this aim let us introduce 
the closed immersion
\begin{equation}
 j\colon \Hom(\Com^{r-a},\Com^a)^{\oplus n-1}\times\GL(r)\hookrightarrow\Pk
\label{eqj}
\end{equation}
given by $j(b_1,\dots.b_{n-1},\theta)= (\beta,\xi)$, where the morphism $\beta$ is defined by the equations
\begin{equation}
\begin{cases}
  \beta_{10}=
 \begin{pmatrix}
  \mathbf{1}_{(n-1)a}&0
 \end{pmatrix}\,;\\
 \beta_{11}=
 \begin{pmatrix}
  0&\mathbf{1}_{(n-1)a}
 \end{pmatrix}\,;\\
 \beta_{2q}=0&\text{for $q=0,\dots,n$;}\\
 \beta_{2,n+1}=
 \begin{pmatrix}
  b_1\\\vdots\\b_{n-1}
 \end{pmatrix}\,,
\end{cases}
\label{eq-j-Beta}
\end{equation}
\smallskip
and the linear map $\xi$ is represented by the $(n^2a+(r-a))\times r$ matrix
\begin{equation}
 \begin{pmatrix}
  \xi_1\\
  \vdots\\
  (-1)^{i-1}\xi_i\\
  \vdots\\
  (-1)^{n-1}\xi_n\\
  \theta_{r-a}
 \end{pmatrix}\,,
 \quad \text{where} \ \ \begin{cases} \text{$\theta_{r-a}$ is the matrix consisting of the last $r-a$ rows of $\theta$,}\\
  \text{$\xi_i$ is an $na\times r$  block of the form}\quad
 \begin{pmatrix}
  0_{(i-1)a\times r}\\
  \theta^a\\
  0_{(n-i)a\times r}
 \end{pmatrix}\,,  \\  \text{$\theta^a$ is the matrix consisting of the first $a$ rows of $\theta$.}
 \end{cases}
 \label{eqxi}
\end{equation}
A direct computation shows that the pair $(\beta,\xi)$ belongs to $\Pk$.

\smallskip
We claim that \emph{the immersion $j$ determines a reduction of the principal $\Gk$-bundle $\Pk \lra \mathcal{M}^n(r,a,C_{\mathrm{m}})$ to the
principal $\GL(a,r)$-bundle  $\Hom(\Com^{r-a},\Com^a)^{\oplus n-1}\times\GL(r) \lra \mathcal{M}^n(r,a,C_{\mathrm{m}})$}. Indeed we can define an immersion
 $\imath\colon \GL(a,r) {\hookrightarrow} \Gk=\Aut(\Vk)\times\Aut(\Wk)$ by setting
 \begin{equation}
\imath \begin{pmatrix}
  A&B\\
  0&C
 \end{pmatrix} = (\psi,\chi)\,,
\label{eq-imath}
\end{equation}
where
\[
 \psi=
 \begin{pmatrix}
  A\otimes\mathbf{1}_n&B\otimes \mathbf{v}\\
  0&C
 \end{pmatrix}\quad\text{and}\quad\chi=A\otimes\mathbf{1}_{n-1}\,,
\]
with  $\mathbf{v}=(v_0,\dots,v_{n-1})^T$ and $v_i=(-1)^{n-1-i}y_2^iy_1^{n-1-i}s_E$.

It is easy to see that the immersion $j$ is $\GL(a,r)$-equivariant.
\begin{lemma}
The image of $j$ has nonempty intersection with every $\Gk$-orbit, and for any $\Gk$-orbit $O$, the stabilizer in $\Gk$ of $O\cap\im j$ is precisely the image of $\imath$.
\label{eqj-red}
\end{lemma}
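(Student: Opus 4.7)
The plan is to verify directly the two hypotheses of Lemma~\ref{lemmaquot}: every $\Gk$-orbit in $\Pk$ meets $\im j$, and the $\Gk$-stabilizer of $O\cap\im j$ coincides with $\im\imath$ for any orbit $O$. I would first prove the surjectivity onto orbits, constructing, starting from an arbitrary $(\beta,\xi)\in\Pk$, a pair $(\psi,\chi)\in\Gk$ such that $(\psi,\chi)\cdot(\beta,\xi)\in\im j$. The construction proceeds in three stages. In stage one I normalize the matrix pencil $\beta_1=\beta_{10}y_1+\beta_{11}y_2$ through the left action of $\chi\in\Aut(\Wk)=\GL((n-1)a)$ and the right action of $\psi_{11}\in\GL(na)$; the invertibility of $\Phi$ is equivalent to the triviality of $\ker\beta_1|_{\li}$ on $\li\simeq\Pu$, which by the Kronecker theory of matrix pencils forces $(\beta_{10},\beta_{11})$ to be equivalent under this action to a direct sum of $a$ standard $(n-1)\times n$ L-blocks, producing the form in \eqref{eq-j-Beta}. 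In stage two I use the off-diagonal block $\psi_{12}$, whose $n$ components $\psi_{12,q}$ span $H^0(\Ol_{\Sigma_n}(1,-1))$, to annihilate $\beta_{2q}$ for $q=0,\ldots,n$: expanding $\beta_1\psi_{12}$ in the $s_E$-part of $H^0(\Ol_{\Sigma_n}(1,0))$ produces an under-determined linear system which admits an explicit solution. In stage three, since $\beta_{2,n+1}s_\infty$ vanishes along $\li$, the kernel $\ker H^0(\beta|_{\li})$ depends only on the already-normalized $\beta_1$; a direct calculation identifies this $r$-dimensional kernel with a vector space canonically isomorphic to $\Com^a\oplus\Com^{r-a}\simeq\Com^r$, so that any admissible $\xi$ is parametrized uniquely by a matrix $\theta\in\GL(r)$ as in \eqref{eqxi}, and the $b_i$ are recovered by splitting the remaining $\beta_{2,n+1}$ into $n-1$ blocks of size $a\times(r-a)$.

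For the stabilizer statement, the $\GL(a,r)$-equivariance of $j$ yields the inclusion $\im\imath\subseteq\Stab_{\Gk}(O\cap\im j)$ immediately, so it suffices to show that if $x=j(b_1,\ldots,b_{n-1},\theta)$ and $x'=j(b_1',\ldots,b_{n-1}',\theta')$ lie in $\im j$ and are related by $(\psi,\chi)\cdot x=x'$ for some $(\psi,\chi)\in\Gk$, then $(\psi,\chi)\in\im\imath$. Decomposing $\chi\beta=\beta'\psi$: the $y_1$- and $y_2$-coefficients produce $\chi\beta_{10}=\beta_{10}\psi_{11}$ and $\chi\beta_{11}=\beta_{11}\psi_{11}$, which, with the normalized form of $(\beta_{10},\beta_{11})$, impose a block-Toeplitz-like structure on $\psi_{11}$; together with the zero borders this forces the block-diagonal form $\psi_{11}=A\otimes\mathbf{1}_n$ and $\chi=A\otimes\mathbf{1}_{n-1}$ for a unique $A\in\GL(a)$. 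The $s_E$-coefficients of $\chi\beta_2=\beta_1\psi_{12}+\beta_2'\psi_{22}$ give a telescoping recurrence on $\psi_{12}$ whose only solutions have the tensor-product shape $B\otimes\mathbf{v}$ appearing in \eqref{eq-imath}, while the $s_\infty$-coefficient reads $Ab_i=b_i'\psi_{22}$ for $i=1,\ldots,n-1$. Finally, $H^0(\psi|_{\li})\circ\xi=\xi'$, compared against the explicit formula for $\xi$, pins down $\psi_{22}=C$ with $\left(\begin{smallmatrix}A & B \\ 0 & C\end{smallmatrix}\right)\in\GL(a,r)$, giving $(\psi,\chi)=\imath\left(\begin{smallmatrix}A & B \\ 0 & C\end{smallmatrix}\right)\in\im\imath$.

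The main obstacle will be stage one of the first part: although the Kronecker classification of matrix pencils is classical, one must carefully identify the invariant encoded in the invertibility of $\Phi=H^0(\beta|_{\li}(-1))$ with the Kronecker canonical form consisting of $a$ minimal $(n-1)\times n$ L-blocks, via the cohomological criterion that a rank-$a$ degree-$0$ bundle on $\Pu$ is trivial if and only if its $(-1)$-twist has vanishing $H^0$. The rest of the argument is a careful but essentially mechanical exercise in block-matrix algebra, where most of the effort goes into matching the sign conventions of \eqref{eqxi} against the explicit parametrization of $\ker H^0(\beta|_{\li})$ obtained in stage three.
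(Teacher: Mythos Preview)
Your plan is correct and covers all the necessary steps; the difference from the paper lies mainly in how you handle the normalization of the pencil $\beta_1=\beta_{10}y_1+\beta_{11}y_2$ and in how you organise the stabilizer computation.

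For the normalization, the paper does not invoke Kronecker theory. Instead it writes down the block decomposition \eqref{eqPhi-1} of the inverse matrix $\Phi^{-1}$, extracts from the identities $\Phi\Phi^{-1}=\Phi^{-1}\Phi=\bm{1}$ the relations \eqref{eqbeta-phi}, and uses these to construct, by an explicit induction on $q=0,\dots,n-1$, elements of $\Gk$ bringing $(\beta_{10},\beta_{11})$ to the standard form. At each step the paper records the stabilizer $G_q$ of the partially normalized locus $O_q$, so that after the final step (killing $\beta_{2q}$ for $q\le n$) the stabilizer $G_n=\im\imath$ drops out automatically. This is entirely elementary and self-contained: it uses nothing beyond the invertibility of $\Phi$.

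Your route via the Kronecker canonical form is perfectly legitimate and more conceptual: it identifies the normalization problem with a classical classification, and your cohomological translation (invertibility of $\Phi$ $\Leftrightarrow$ triviality of $\ker\beta_1|_{\li}$, hence all Kronecker indices equal to $n$) is the right bridge. The cost is that you must import an external result and then argue carefully that the generic splitting type on $\li$ pins down the canonical form uniquely; the paper's inductive argument avoids this by staying inside the linear algebra of $\Phi$. Likewise, your direct computation of the stabilizer from the intertwining equations $\chi\beta=\beta'\psi$ is equivalent to the paper's nested chain $G_0\supset\cdots\supset G_n$, just compressed into a single block-matrix calculation at the end. Either organisation works; the paper's has the mild advantage that each step is visibly the stabilizer of the normalization just performed.
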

\begin{proof}
We consider the following block decomposition of the matrix $\Phi^{-1}$:
\begin{equation}
\begin{aligned}
\Phi^{-1}&=
\begin{pmatrix}
\varphi_{11} & \cdots & \varphi_{1n}\\
\vdots & & \vdots\\
\varphi_{n-1,1} & \cdots & \varphi_{n-1,n}\\
\end{pmatrix}
\begin{array}{l}
\updownarrow na \phantom{\varphi_{11}}\\
\vdots \phantom{\varphi_{11}}\\
\updownarrow na \phantom{\varphi_{11}}
\end{array}\\
&\phantom{=}\mspace{20mu}
\begin{matrix}
\overleftrightarrow{\mbox{\tiny$(n-1)a$}} & \cdots & \overleftrightarrow{\mbox{\tiny$(n-1)a$}} 
\end{matrix}
\end{aligned}
\label{eqPhi-1}
\end{equation}
Because of eq.~\eqref{eqPhi}, the conditions $\Phi\Phi^{-1}=\Phi^{-1}\Phi=\bm{1}_{n(n-1)a}$ imply that
\begin{equation}
\begin{aligned}
\beta_{10}\varphi_{1q}&=\delta_{1,q}\bm{1}_{(n-1)a} & \text{for}\quad q&=1,\dots,n\\
\varphi_{1q}\beta_{10}+\varphi_{1,q+1}\beta_{11}&=\delta_{1,q}\bm{1}_{na} & \text{for}\quad q&=1,\dots,n-1
\end{aligned}
\label{eqbeta-phi}
\end{equation}
Let $(\beta,\xi)\in\Pk$, and let $O$ be its $\Gk$-orbit. Eqs. \eqref{eqbeta-phi} imply that, by acting with $\Gk$, one can find a point inside $O$ such that
\begin{equation}
\beta_{10}=
\begin{pmatrix}
\bm{1}_{(n-1)a} & 0
\end{pmatrix}\,.
\label{eqb10}
\end{equation}
We call $O_{0}$ the subvariety of $O$ cut by eq. \eqref{eqb10}. It is easy to see that the stabilizer $G_{0}$ of $O_{0}$ in $\Gk$ is the subgroup characterized by the condition $\psi_{11}=
\left(\begin{smallmatrix}
\chi & 0\\
0 & A
\end{smallmatrix}\right)$, for some $A\in \GL(a)$.

Let $O_{q}$ be the subvariety of $O_{0}$ cut by the equation
\begin{equation}
\beta_{11}=
\begin{pmatrix}
* & 0\\
0 & \bm{1}_{qa}
\end{pmatrix}
\end{equation}
and let $G_{q}$ be the closed subgroup of $G_{0}$ characterized by the condition $\chi=
\left(\begin{smallmatrix}
* & 0\\
0 & A\otimes\bm{1}_{q}
\end{smallmatrix}\right)$, for $q=1,\dots,n-1$.
By using eqs.~\eqref{eqbeta-phi} and reasoning by induction, one can show that, for any $q=0,\dots,n-2$ and for any point $(\beta,\xi) \in O_q$, there exists an element $g \in G_q$ such that $g\cdot (\beta,\xi)$ lies inside $O_{q+1}$. Moreover, it is easy to check that the stabilizer of $O_{q+1}$ in $G_q$ is $G_{q+1}$.

Summing up, if $(\beta,\xi)\in\Pk$ is any point and $O$ is its $\Gk$-orbit, by acting with $\Gk$ on $(\beta,\xi)$ one can find a point in $O_{n-1}$, which is the subvariety of $O$ cut by eq. \eqref{eqb10} and by the equation
\begin{equation}
\beta_{11}=
\begin{pmatrix}
0 & \bm{1}_{(n-1)a}
\end{pmatrix}\,.
\label{eqb11}
\end{equation}
The stabilizer $G_{n-1}$ of $O_{n-1}$ in $\Gk$ is the subgroup characterized by the conditions $\psi_{11}=A\otimes\bm{1}_{n}$ and $\chi =A\otimes\bm{1}_{n-1}$, for some $A\in \GL(a)$.

Given any point $(\beta,\xi)\in O_{n-1}$, by acting with $G_{n-1}$ one can find a point such that
\begin{equation}
\beta_{2q}=0\qquad\text{for}\quad q=0,\dots,n\,.
\label{eqb2q}
\end{equation}
Let $O_{n}$ be the subvariety cut in $O_{n-1}$ by eq.~\eqref{eqb2q}. It is easy to see that the stabilizer $G_{n}$ of $O_{n}$ in $G_{n-1}$ is the closed subgroup determined by the condition 
$\psi_{12}=B\otimes \mathbf{v}$, where $\mathbf{v}=(v_0,\dots,v_{n-1})^T$ and $v_i=y_2^i(-y_1)^{n-1-i}s_E$, for some $B\in\operatorname{Mat}_\Com(a\times(r-a))$.

It is easy to see that $G_{n}=\im\imath$. Let $Z$ be the subvariety cut in $\Pk$ by eqs.~\eqref{eqb10}, \eqref{eqb11} and \eqref{eqb2q}: we claim that $Z=\im j$. Indeed the condition $H^0(\beta|_{\ell_\infty})\circ\xi$ implies that, for all points $(\beta,\xi)\in Z$, the matrix $\xi$ has the form described in eq.~\eqref{eqxi} and, since $\xi$ has maximal rank, it follows that the $r\times r$ matrix $\left(\begin{smallmatrix} \theta^{a} \\ \theta_{r-a}\end{smallmatrix}\right)$ is invertible.
\end{proof}
Lemmas \ref{lemmaquot} and \ref{eqj-red} imply that the immersion $j$ determines a reduction of the structure group of the  principal $\Gk$-bundle $\Pk$ to a principal $\GL(a,r)$-bundle, as we claimed. In particular, one has the isomorphisms
\begin{equation}
 (\Hom(\Com^{r-a},\Com^a)^{\oplus n-1}\times\GL(r))/\GL(a,r)\simeq\Pk/\Gk\, \simeq \mathcal{M}^n(r,a,C_{\mathrm{m}});
 \label{eqIsoRed}
\end{equation}
 in view of \eqref{eq:iso}, this concludes the proof of Theorem \ref{thm:minimal}.

 We notice that the varieties $\Hom(\Com^{r-a},\Com^a)^{\oplus n-1}\times\GL(r)$ in the case $n>1$ and $\GL(r)$ in the case $n=1$ can be thought of as ADHM
 data spaces for $\mathcal{M}^n(r,a,C_{\mathrm{m}})$. In Section \ref{Section-Nakajima-Flag} the schemes $\mathcal{M}^n(r,a,C_{\mathrm{m}})$ will be constructed as quiver varieties, and  so a different ADHM description will be provided.

\begin{rem}  Theorem \ref{thm:minimal} is consistent with instanton counting, which shows that the moduli spaces $\mathcal{M}^n\left(r,a,C_{\text{m}} \right)$ have the same Betti numbers as $\operatorname{Gr}(a,r)$ \cite{BPT}.
\end{rem}

\subsection{Some geometric remarks.} \label{geometricremarks}
In this subsection, we give a more intrinsic interpretation to the isomorphism \eqref{eqIsoM}.
\begin{prop}
\label{lmEExt}
Let $\E$ be a sheaf on $\Sigma_{n}$.
\begin{enumerate}
\item[(i)]
The sheaf $\E$ is locally free, trivial at infinity, and satisfies condition \eqref{eq:min}
if and only if it fits into an extension of the form
\begin{equation}
\xymatrix{
0 \ar[r] & \On(E)^{\oplus a} \ar[r]^-{i} & \E \ar[r]^-{p} & \On^{\oplus r-a} \ar[r] & 0
}
\label{eqEExt}
\end{equation}
for some integers $r>0$ and $0\leq a<r$.
\item[(ii)]
Two vector bundles $\E$ and $\E'$ which are trivial at infinity and satisfy condition \eqref{eq:min}
are isomorphic if and only if they fit into isomorphic extensions of the form \eqref{eqEExt}.
\item[(iii)] The set of isomorphism classes of vector bundles $\E$ on $\Sigma_{n}$ which are trivial at infinity, satisfy condition \eqref{eq:min} and such that $\rk(\E)=r$, $c_{1}(\E)=aE$ can be identified with the vector space $\Ext^{1}_{\On}\left(\On^{\oplus r-a},\On(E)^{\oplus a}\right)$.
\end{enumerate}
\end{prop}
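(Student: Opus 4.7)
Statements (i), (ii), (iii) are established in sequence, with (iii) being a formal consequence of (i) and (ii). For the ``if'' direction of (i), $\E$ is locally free as an extension of locally free sheaves; using $\ch(\On(E))=1+E-\tfrac{n}{2}[\mathrm{pt}]$ (since $E^{2}=-n$) one computes $\ch(\E)=(r,aE,-C_{\mathrm{m}}-\tfrac{1}{2}na^{2})$. Since $E\cdot\li=0$, $\On(E)|_{\li}$ is trivial, so restricting \eqref{eqEExt} to $\li$ yields $0\to\Ol_{\Pu}^{\oplus a}\to\E|_{\li}\to\Ol_{\Pu}^{\oplus r-a}\to 0$, which splits because $H^{1}(\Pu,\Ol)=0$.

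For the ``only if'' direction, by Theorem~\ref{thmEMon} and Lemma~\ref{eqj-red} I may assume $\E=\ker\beta$ with $\beta\colon\Vk\to\Wk$ in the normal form \eqref{eq-j-Beta}. The key step is to exhibit an exact sequence
\[
0\to\On(E)^{\oplus a}\xrightarrow{\iota}\On(1,-1)^{\oplus na}\xrightarrow{\beta_{1}}\Wk\to 0,
\]
where $\iota$ is, column by column for $s=1,\dots,a$, the subbundle inclusion $\On(E)\to\On(1,-1)^{\oplus n}$ given by the nowhere-vanishing tuple $(y_{2}^{n-1},-y_{1}y_{2}^{n-2},\dots,(-y_{1})^{n-1})\cdot s_{E}$. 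A direct computation gives $\beta_{1}\circ\iota=0$, and the bi-diagonal matrix $y_{1}\beta_{10}+y_{2}\beta_{11}$ has rank $(n-1)a$ at every $[y_{1}\!:\!y_{2}]\in\Pu$, so $\beta_{1}$ is surjective as a bundle map. Applying the snake lemma to this sequence and the monad $0\to\E\to\Vk\xrightarrow{\beta}\Wk\to 0$ produces $\Vk/\iota(\On(E)^{\oplus a})\simeq\Wk\oplus\On^{\oplus r-a}$ with induced map $(\mathrm{id},\beta_{2})\colon\Wk\oplus\On^{\oplus r-a}\to\Wk$, whose kernel is canonically $\On^{\oplus r-a}$ and coincides with $\E/\iota(\On(E)^{\oplus a})$; this yields the required extension.

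For (ii), the subbundle $\iota\colon\On(E)^{\oplus a}\hookrightarrow\E$ is intrinsic: cohomology of the monad twisted by $\On(-E)$ (using $H^{\geq 1}(\On(0,n-1))=H^{\geq 1}(\On(0,n))=0$ and the full acyclicity of $\On(-1,n)$, checked on ruling fibres) reduces the long exact sequence to $0\to H^{0}(\E\otimes\On(-E))\to\Com^{n^{2}a}\to\Com^{(n^{2}-1)a}\to H^{1}(\E\otimes\On(-E))\to 0$. A column-by-column analysis identifies the middle map with $a$ copies of $(P_{1},\dots,P_{n})\mapsto(y_{1}P_{g}+y_{2}P_{g+1})_{g=1}^{n-1}$ on polynomials of degree $\leq n-1$ in $y_{1},y_{2}$, whose surjectivity is direct and whose one-dimensional kernel is spanned by $((-y_{1})^{g-1}y_{2}^{n-g})_{g=1}^{n}$. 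Hence $h^{0}(\E\otimes\On(-E))=a$, and the image of $\ev\colon H^{0}(\E\otimes\On(-E))\otimes\On(E)\to\E$ coincides with $\iota(\On(E)^{\oplus a})$. Functoriality of $\ev$ then turns any isomorphism $\E\toi\E'$ into an isomorphism of extensions, proving (ii). Statement (iii) then follows from the classical identification of $\Ext^{1}_{\On}(\On^{\oplus r-a},\On(E)^{\oplus a})$ with equivalence classes of extensions. The main technical step in the plan is the explicit kernel/surjectivity analysis of $\beta_{1}$ in the normal form, which pins down the splitting of the cokernel.
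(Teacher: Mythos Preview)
Your proof is correct. Part~(i) follows the paper's route closely: both arguments reduce to identifying $\ker\beta_{1}\simeq\On(E)^{\oplus a}$ in a normal form for $\beta$ and then running a snake-lemma diagram chase against the monad; your phrasing via the quotient $\Vk/\iota\simeq\Wk\oplus\On^{\oplus r-a}$ is equivalent to the paper's diagram~\eqref{eqDiaExt}. The converse direction is likewise the same computation of $\ch(\E)$ and triviality on~$\li$.

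The genuine difference lies in part~(ii). The paper argues as follows: for $n=1$ it observes that $\Ext^{1}_{\Ol_{\Sigma_{1}}}(\Ol_{\Sigma_{1}}^{\oplus r-a},\Ol_{\Sigma_{1}}(E)^{\oplus a})=0$, so all extensions split; for $n\geq2$ it invokes \cite[Lemma~4.7]{BBR}, which says any isomorphism $\Lambda\colon\E\to\E'$ lifts uniquely to an isomorphism of monads, and then reads off the isomorphism of extensions from diagram~\eqref{eqDiaExt}. Your approach instead proves directly that the subbundle $\On(E)^{\oplus a}\hookrightarrow\E$ is canonical, namely the image of the evaluation map $H^{0}(\E(-E))\otimes\On(E)\to\E$, by computing $h^{0}(\E(-E))=a$ from the long exact sequence of the twisted monad. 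This is a nice self-contained alternative: it avoids the external lifting lemma and works uniformly for all $n\geq1$, at the cost of the explicit kernel/cokernel analysis of the global-section map $\Com^{n^{2}a}\to\Com^{(n^{2}-1)a}$. The paper's route is shorter once the monad-lifting lemma is available, but yours makes the intrinsic nature of the filtration transparent.
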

\begin{proof}
(i)
Let $\E$ be a vector bundle on $\Sigma_{n}$ which is trivial at infinity and satisfies the minimality condition \eqref{eq:min}. Then $c_{1}(\E)=aE$ for some integer $a$ such that $0\leq a<r=\rk(\E)$, while the second Chern class $c_{2}(\E)$ is fixed by eq.~\eqref{eq:min}. Theorem \ref{thmEMon} and eq.~\eqref{eqMinimalMonad} imply that $\E\simeq\ker\beta$.

For $n=1$, one has $\ker\beta= \Ol_{\Sigma_{1}}(E)^{\oplus a}\oplus \Ol_{\Sigma_{1}}^{\oplus r-a}$, so that $\E$ fits into a split extension of the form \eqref{eqEExt}: this proves the first statement in this case.

Let us assume $n\geq2$. We claim that 
\begin{equation}
\ker\beta_{1}\simeq\On(E)^{\oplus a}
\label{eqKerBeta1}
\end{equation}
where 
$$\beta_{1}\colon\On(1,-1)^{\oplus na}\lra\On(1,0)^{\oplus(n-1)a}$$
 is the first block of the matrix $\beta$ (see eq.~\eqref{eqBeta}). Indeed $\On(1,-1)^{\oplus na}\simeq\Com^{a}\otimes\On(1,-1)^{\oplus n}$, $\On(1,0)^{\oplus(n-1)a}\simeq\Com^{a}\otimes \On(1,0)^{\oplus n-1}$, and up to the action of $\Gk$ one has $\beta_{1}=\bm{1}_{a}\otimes f$, where
\begin{equation}
f=\begin{pmatrix} \bm{1}_{n-1}y_{1} & 0 \end{pmatrix} + \begin{pmatrix} 0 & \bm{1}_{n-1}y_{2} \end{pmatrix}\,.
\label{eqFinBeta}
\end{equation}
It follows that $\ker\beta_{1}\simeq(\ker f)^{\oplus a}$. Eq.~\eqref{eqFinBeta} implies that $f$ is surjective, so that $\ker f$ is locally free, of rank $1$ and $c_{1}(\ker f)=H-nF=E$. This proves the claim.

By eq.~\eqref{eqMinimalMonad} $\E$ fits into the following commutative diagram
\begin{equation}
\xymatrix{
 & 0 \ar[d] & 0 \ar[d] &  \\
0 \ar[r] & \On(E)^{\oplus a} \ar[r]^-{i} \ar[d]^-{\kappa} & \E \ar[r]^-{p} \ar[d]^-{h} & \On^{\oplus r-a} \ar[r] \ar@{=}[d] & 0\\
0 \ar[r] & \On(1,-1)^{\oplus na} \ar[r]^-{j} \ar[d]^-{\beta_{1}} & \Vk \ar[r]^-{\pi} \ar[d]^{\beta} & \On^{\oplus r-a} \ar[r] & 0\\
 &\Wk \ar@{=}[r] \ar[d] & \Wk \ar[d] \\
& 0 & 0
}
\label{eqDiaExt}
\end{equation}
where $j=\left(\begin{smallmatrix} \bm{1}_{na} \\ 0  \end{smallmatrix}\right)$, $\pi=\left(\begin{smallmatrix} 0 & \bm{1}_{r-a} \end{smallmatrix}\right)$ and $p=\pi\circ h$. In the left column we have used eq.~\eqref{eqKerBeta1} and the surjectivity of $f$. The morphism $i$ is induced by the other morphisms, and the injectivity of $i$ is a consequence of the injectivity of $j\circ \kappa= h\circ i$. The surjectivity of $p$ is a consequence of the Snake Lemma. This proves the first statement in one direction.

Conversely, let $\E$ be a  sheaf on $\Sigma_{n}$ that fits into eq.~\eqref{eqEExt}. The Chern character of $\E$ is easily computed, and in particular it turns out that $\E$ satisfies condition \eqref{eq:min}. Since $\E$ is an extension of locally free sheaves, it is locally free, and its restriction $\E|_{\li}$ is locally free of the same rank. By restricting \eqref{eqEExt} to $\ell_{\infty}$, by twisting the result by $\Ol_{\li}(-1)$ and by taking cohomology one gets $H^{i}(\E|_{\li}(-1))=0$, $i=0,1$, which implies that $\E|_{\li}$ is trivial.

(ii)
In one direction, the second statement is straightforward. In the other direction, we have to distinguish between the case $n=1$ and the case $n\geq 2$.
For $n=1$, \cite[Lemma~3.1]{BBR} implies that
$$\Ext^{1}_{\Ol_{\Sigma_{1}}}\left(\Ol_{\Sigma_{1}}^{\oplus r-a},\Ol_{\Sigma_{1}}(E)^{\oplus a}\right)=0\,.$$
It follows that all extensions of the form \eqref{eqEExt} split, and this proves the second statement in this case.
For $n\geq2$, if $\E$ and $\E'$ are two isomorphic vector bundles  which are trivial at infinity and satisfy condition \eqref{eq:min}, then the first part of the proof entails  
that $\E$ and $\E'$ fit into extensions of the form \eqref{eqEExt}. By \cite[Lemma~4.7]{BBR} any isomorphism $\Lambda\colon\E\lra\E'$ lifts uniquely to a monads isomorphism $(\psi,\chi)$. The second statement is easily checked by means of the diagram \eqref{eqDiaExt}.

(iii) The statement follows from (i) and (ii).
\end{proof}
%
%

Let $X_{n}=\Sigma_{n}\setminus\li$. This open subset can be naturally regarded as the total space of the line bundle $\Ol_{\Pu}(-n)$.
\begin{lemma}
\label{lmE-iso-away-infty}
Two vector bundles $\E$ and $\E'$ of same Chern character, which are trivial at infinity and satisfy condition \eqref{eq:min}, 
 are isomorphic if and only if their restrictions $\E|_{X_{n}}$ and $\E'|_{X_{n}}$ are isomorphic as $\Ol_{X_{n}}$-modules.
\end{lemma}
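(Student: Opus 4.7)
The direction $(\Rightarrow)$ is immediate, so I focus on the converse. Let $\phi\colon \E|_{X_n}\toi\E'|_{X_n}$ be an isomorphism of $\Ol_{X_n}$-modules. By Proposition \ref{lmEExt}(i), both $\E$ and $\E'$ fit in extensions of the form \eqref{eqEExt}, which I write as $0\to\On(E)^{\oplus a}\xrightarrow{i}\E\xrightarrow{p}\On^{\oplus r-a}\to 0$ and analogously for $\E'$ with maps $i',p'$. The plan is to show that $\phi$ induces global automorphisms of the outer terms matching the two extension classes in $\Ext^{1}_{\On}(\On^{\oplus r-a},\On(E)^{\oplus a})$, whence $\E\simeq\E'$ by Proposition \ref{lmEExt}(ii).

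I would first analyse the situation fibrewise along the ruling $\pi\colon\Sigma_n\to\Pu$. On every fibre $F\simeq\Pu$, the extension \eqref{eqEExt} splits because $\Ext^{1}_F(\Ol_F,\Ol_F(1))=H^1(\Pu,\Ol_{\Pu}(1))=0$; hence $\E|_F\simeq \Ol_{\Pu}(1)^{\oplus a}\oplus\Ol_{\Pu}^{\oplus r-a}$, and $\On(E)^{\oplus a}|_F$ coincides with the Harder--Narasimhan maximally destabilizing subsheaf of $\E|_F$. Since $\phi|_F$ is an isomorphism and the HN filtration is canonical, $\phi|_F$ preserves this subsheaf and induces constant isomorphisms $\phi_{1,F}\in\GL(a,\Com)$ and $\phi_{2,F}\in\GL(r-a,\Com)$ on the outer summands.

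To promote this fibrewise picture to a global one, observe that $\On(-E)|_{X_n}\simeq(\pi|_{X_n})^{\ast}\Ol_{\Pu}(n)$ and, using $(\pi|_{X_n})_{\ast}\Ol_{X_n}=\bigoplus_{k\geq 0}\Ol_{\Pu}(kn)$, one obtains $H^0(X_n,\On(-E)|_{X_n})=\bigoplus_{k\geq 0}H^0(\Pu,\Ol_{\Pu}((k+1)n))$, with the $k$-th summand corresponding to the fibre degree $k$ part. The composition $p'|_{X_n}\circ\phi\circ i|_{X_n}$ lies in $H^0(X_n,\On(-E)|_{X_n})^{\oplus a(r-a)}$ and vanishes on every fibre of $\pi|_{X_n}$ by the HN preservation just observed; since a section vanishing on every fibre must have every homogeneous component zero, it vanishes identically. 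Thus $\phi$ maps $i(\On(E)^{\oplus a}|_{X_n})$ into $i'(\On(E)^{\oplus a}|_{X_n})$ and induces genuine morphisms $\phi_1,\phi_2$ on the outer terms over $X_n$. The same decomposition argument, applied to the matrix entries of $\phi_1,\phi_2$ in $H^0(X_n,\Ol_{X_n})=\bigoplus_{k\geq 0}H^0(\Pu,\Ol_{\Pu}(kn))$ (which restrict to constants on every fibre), forces these entries to be globally constant, so $\phi_1\in\GL(a,\Com)$ and $\phi_2\in\GL(r-a,\Com)$ extend canonically to automorphisms of $\On(E)^{\oplus a}$ and $\On^{\oplus r-a}$ over $\Sigma_n$.

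Finally, the isomorphism $\phi$ witnesses the equivalence of the restricted extension classes $\xi|_{X_n}$ and $\xi'|_{X_n}$ under the action of $(\phi_1,\phi_2)$. A direct Leray computation identifies both source and target of the restriction map
$$\Ext^{1}_{\On}(\On^{\oplus r-a},\On(E)^{\oplus a})\longrightarrow \Ext^{1}_{\Ol_{X_n}}(\Ol_{X_n}^{\oplus r-a},\On(E)^{\oplus a}|_{X_n})$$
with $H^1(\Pu,\Ol_{\Pu}(-n))^{\oplus a(r-a)}$ (vanishing for $n=1$), the restriction being the natural inclusion of the degree-zero summand of $(\pi|_{X_n})_{\ast}((\pi|_{X_n})^{\ast}\Ol_{\Pu}(-n))$ and hence injective. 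Since the action of the constant matrices $(\phi_1,\phi_2)$ commutes with restriction, the extensions are already equivalent on $\Sigma_n$, and Proposition \ref{lmEExt}(ii) yields $\E\simeq\E'$. The principal technical obstacle is the decomposition argument that upgrades fibrewise HN preservation to the global statements about $\phi_1$ and $\phi_2$; once this is in place, the rest is essentially formal.
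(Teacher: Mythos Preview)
Your fibrewise Harder--Narasimhan step has a genuine gap. You restrict $\phi$ to a fibre $F\simeq\Pu$ of the ruling $\pi\colon\Sigma_n\to\Pu$, but $\phi$ is defined only on $X_n=\Sigma_n\setminus\ell_\infty$; since $\ell_\infty$ is a \emph{section} of the ruling ($H\cdot F=1$), each $F$ meets $X_n$ in $F\setminus\{\text{pt}\}\simeq\mathbb{A}^1$. Over the affine line every bundle is free, there is no HN filtration, and the subbundle $\On(E)^{\oplus a}|_{\mathbb{A}^1}\subset\E|_{\mathbb{A}^1}\simeq\Ol_{\mathbb{A}^1}^{\oplus r}$ is in no way canonical. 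Concretely, take the split case $\E=\E'=\On(E)\oplus\On$ (so $a=1$, $r=2$): for any nonzero $s\in H^0(X_n,\pi^*\Ol_{\Pu}(n))$ the shear $\left(\begin{smallmatrix}1&0\\ s&1\end{smallmatrix}\right)$ is an automorphism of $\E|_{X_n}$ that does \emph{not} carry the first summand into itself, so $p'|_{X_n}\circ\phi\circ i|_{X_n}$ need not vanish fibrewise, nor globally. The same conflation recurs in the next paragraph: entries of $\phi_1,\phi_2$ lie in $H^0(X_n,\Ol_{X_n})=\bigoplus_{k\ge0}H^0(\Pu,\Ol_{\Pu}(kn))$, and the $k$-th summand consists precisely of functions that are degree-$k$ polynomials along each $\mathbb{A}^1$-fibre---only $k=0$ gives fibrewise constants, so nothing forces $\phi_1,\phi_2$ to be constant matrices.

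The paper's route sidesteps any analysis of a particular $\phi$. Via Proposition~\ref{lmEExt} it reduces the lemma to showing that restriction $H^1(\Sigma_n,\On(E))\to H^1(X_n,\On(E)|_{X_n})$ is an \emph{isomorphism}, and proves this with the long exact sequence in cohomology with supports along $\ell_\infty$: surjectivity of the connecting map $H^0(X_n,\On(E)|_{X_n})\to H^1_{\ell_\infty}(\Sigma_n,\On(E))$ and the vanishing of $H^2_{\ell_\infty}(\Sigma_n,\On(E))$ are obtained by comparison with the analogous sequence for $\On$, using $0\to\On\to\On(E)\to\Ol_E(-n)\to0$ together with $E\cap\ell_\infty=\varnothing$. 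Your final Leray computation only gives injectivity; without a valid argument that the given $\phi$ can be replaced by one respecting the extension structure, injectivity alone does not close the loop.
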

\begin{proof}
Let $\rk(\E)=\rk(\E')=r$, $c_{1}(\E)=c_{1}(\E')=aE$.
By Proposition \ref{lmEExt}, it is enough to prove that the natural morphism
\begin{equation}
\Ext^{1}_{\On}\left(\On^{\oplus r-a},\On(E)^{\oplus a}\right)\lra\Ext^{1}_{\Ol_{X_{n}}}\left(\Ol_{X_{n}}^{\oplus r-a},\left(\On(E)|_{X_{n}}\right)^{\oplus a}\right)
\end{equation}
is an isomorphism. This is equivalent to prove that the natural morphism
\begin{equation}
H^{1}(\Sigma_{n},\On(E))\lra H^{1}\left(X_{n},\On(E)|_{X_{n}}\right)
\label{eqRestrE}
\end{equation}
is an isomorphism. 

For any $\On$-module $\F$, we denote by $H^{i}_{\li}(\Sigma_{n},\F)$ its $i$-th cohomology group with supports in $\li$. 
By \cite[Exercise~III.2.3.(e)]{Har} there is an exact sequence
\begin{multline}
\xymatrix{
 H^{0}(X_{n},\Ol_{X_{n}}) \ar[r]^-{\partial_{\Ol}} & H^{1}_{\li}(\Sigma_{n},\On) \ar[r] & H^{1}(\Sigma_{n},\On) \ar[r] &
}\\
\xymatrix{
  \ar[r] & H^{1}(X_{n},\Ol_{X_{n}}) \ar[r] & H^{2}_{\li}(\Sigma_{n},\On) \ar[r] & H^{2}(\Sigma_{n},\On)
}\,.
\end{multline}
Our first claim is that
\begin{equation}
\begin{aligned} &\text{the connecting morphism} \quad 
H^{0}(X_{n},\Ol_{X_{n}})\stackrel{\partial_{\Ol}}{\lra} H^{1}_{\li}(\Sigma_{n},\On) \quad
\text{is surjective} \\
&\text{and}\quad H^{2}_{\li}(\Sigma_{n},\On) =0\,.
\end{aligned}
\label{eqCsuppO}
\end{equation}
We first notice that, by  \cite[Lemma~3.1]{BBR}, $H^{1}(\Sigma_{n},\On)=0$, so that $\partial_{\Ol}$ is surjective. Next, by using \cite[Exercise~III.4.1]{Har}, we have the isomorphism
\begin{equation}
H^{1}(X_{n},\Ol_{X_{n}}) \simeq H^{1}(\Pu,\pi_{*}\Ol_{X_{n}})\,,
\label{eq-van1}
\end{equation}
where $\pi\colon X_{n}\lra\Pu$ is the natural projection. It is not difficult to show by direct computation that $\pi_{*}\Ol_{X_{n}}\simeq\Ol_{\Pu}^{\oplus\mathbb{N}}$, so that
\begin{equation}
H^{1}(\Pu,\pi_{*}\Ol_{X_{n}})\simeq H^{1}(\Pu,\Ol_{\Pu})^{\oplus\mathbb{N}}=0\,.
\label{eq-van2}
\end{equation}
Eqs. \eqref{eq-van1} and \eqref{eq-van2} imply that $H^{1}(X_{n},\Ol_{X_{n}})=0$. Finally, again by \cite[Lemma~3.1]{BBR}, we have that $H^{2}(\Sigma_{n},\On)=0$. The vanishing of $H^{2}_{\li}(\Sigma_{n},\On)$ follows.

An analogous claim to \eqref{eqCsuppO} holds true for the sheaf $\On(E)$, i.e.,
\begin{equation}
\begin{aligned}
&\text{the  morphism}\quad
H^{0}\left(X_{n},\On(E)|_{X_{n}}\right)\stackrel{\partial_{E}}{\lra} H^{1}_{\li}(\Sigma_{n},\On(E))\quad
\text{is surjective}\\
&\text{and}\quad H^{2}_{\li}(\Sigma_{n},\On(E))=0\,.
\end{aligned}
\label{eqCsuppE}
\end{equation}
Indeed, the short exact sequence
$$\xymatrix{ 0 \ar[r] & \On \ar[r] & \On(E) \ar[r] & \Ol_{E}(-n) \ar[r] & 0}$$
induces a commutative diagram
\begin{equation}
\xymatrix{
0 \ar[r] & H^{0}(X_{n},\Ol_{X_{n}}) \ar[r]^-{f} \ar[d]^{\partial_{\Ol}}    & H^{0}\left(X_{n},\On(E)|_{X_{n}}\right) \ar[r] \ar[d]^{\partial_{E}}    & H^{0}\left(X_{n},\Ol_{E}(-n)|_{X_{n}}\right) \ar[d]\\
H^{0}_{\li}(\Sigma_{n},\Ol_{E}(-n)) \ar[r] & H^{1}_{\li}(\Sigma_{n},\On) \ar[r]^-{g} & H^{1}_{\li}(\Sigma_{n},\On(E))  \ar[r] & H^{1}_{\li}(\Sigma_{n},\Ol_{E}(-n))
}
\end{equation}
whose rows are exact.
Since $E\cap\li=\varnothing$, there are inclusions
\begin{equation}
E \subset X_{n}\qquad\text{and}\qquad \li\subset \Sigma_{n}\setminus E\,.
\label{eqInclEli}
\end{equation}
From the first inclusion it follows that $H^{0}\left(X_{n},\Ol_{E}(-n)|_{X_{n}}\right) = H^{0}\left(\Pu,\Ol_{\Pu}(-n)\right)=0$, hence $f$ is an isomorphism. The second inclusion, in view of  \cite[Exercise~III.2.3.(f)]{Har}, implies that 
$$H^{i}_{\li}(\Sigma_{n},\Ol_{E}(-n))=0\qquad\text{for all}\quad i\geq0\,,$$
so that $g$ is an isomorphism. Thus,  $\partial_{E}$ is surjective because this is the case for $\partial_{\Ol}$. The  second inclusion of \eqref{eqInclEli} implies also that
$$H^{i}_{\li}(\Sigma_{n},\On(E))\simeq H^{i}_{\li}(\Sigma_{n},\On)\qquad\text{for all}\quad i\geq0\,.$$
By the last statement in \eqref{eqCsuppO} one has $H^{2}_{\li}(\Sigma_{n},\On(E))=0$. This proves the claim \eqref{eqCsuppE}. 

Finally, the fact that the morphism \eqref{eqRestrE}  is an isomorphism follows from the exact sequence
\begin{multline}
\xymatrix{
H^{0}\left(X_{n},\On(E)|_{X_{n}}\right) \ar[r]^-{\partial_{E}} & H^{1}_{\li}(\Sigma_{n},\On(E)) \ar[r] & H^{1}(\Sigma_{n},\On(E)) \ar[r] & 
}\\
\xymatrix{
\ar[r] & H^{1}\left(X_{n},\On(E)|_{X_{n}}\right) \ar[r] & H^{2}_{\li}(\Sigma_{n},\On(E))\,.
}
\label{eqSeqCsuppE}
\end{multline}
\end{proof}

\begin{cor}
\label{proTipFib}
For $n\geq2$, there is an isomorphism between the typical fibre of the vector bundle $T^\vee\operatorname{Gr}(a,r)^{\oplus n-1}$ and the vector space of 
isomorphism classes of  $\Ol_{X_{n}}$-modules $\E|_{X_{n}}$ obtained by restricting vector bundles $\E$ on $\Sigma_{n}$ that are trivial at infinity, satisfy condition \eqref{eq:min}, and such that $\rk(\E)=r$, $c_{1}(\E)=aE$.
\end{cor}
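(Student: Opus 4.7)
The plan is to chain together the results already established in this subsection and reduce the statement to a straightforward cohomology computation. By Lemma \ref{lmE-iso-away-infty}, the restriction functor $\E\mapsto\E|_{X_{n}}$ induces a bijection between isomorphism classes of vector bundles on $\Sigma_{n}$ that are trivial at infinity, satisfy \eqref{eq:min} and have prescribed invariants $\rk(\E)=r$, $c_{1}(\E)=aE$, and isomorphism classes of the corresponding $\Ol_{X_{n}}$-modules $\E|_{X_{n}}$. Hence, it suffices to exhibit a vector space isomorphism between the latter set and the typical fibre $\Hom(\Com^{r-a},\Com^{a})^{\oplus n-1}$ of $T^\vee\operatorname{Gr}(a,r)^{\oplus n-1}$.

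Next, Proposition \ref{lmEExt}(iii) identifies this set with the vector space
\begin{equation*}
\Ext^{1}_{\On}\!\left(\On^{\oplus r-a},\On(E)^{\oplus a}\right)\simeq \Hom_{\Com}(\Com^{r-a},\Com^{a})\otimes_{\Com}H^{1}(\Sigma_{n},\On(E)).
\end{equation*}
The task therefore reduces to showing $\dim_{\Com}H^{1}(\Sigma_{n},\On(E))=n-1$, so that the right-hand side is isomorphic to $\Hom(\Com^{r-a},\Com^{a})^{\oplus n-1}$ as a complex vector space.

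The cohomology computation is routine: using $E^{2}=-n$, $E\cdot F=1$ and the canonical class $K_{\Sigma_{n}}=-2E-(n+2)F$, Riemann--Roch yields $\chi(\On(E))=2-n$, while $H^{0}(\On(E))=\Com\cdot s_{E}$ is one-dimensional and $H^{2}(\On(E))\simeq H^{0}(\On(K-E))^{\vee}=0$ by Serre duality (since $K-E$ is not effective). Subtracting gives $\dim H^{1}(\Sigma_{n},\On(E))=n-1$, which completes the proof.

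The only step that requires genuine content is Lemma \ref{lmE-iso-away-infty}, already proved; everything else is bookkeeping together with the Riemann--Roch computation. I do not anticipate any obstacle beyond checking that the identifications above are compatible; one could alternatively make the isomorphism explicit by relating the $(n-1)$-dimensional space $H^{1}(\Sigma_{n},\On(E))$ to the $n-1$ sections $v_{0},\dots,v_{n-2}\in H^{0}(\On(0,n-1))$ appearing in the definition \eqref{eq-imath} of $\imath$, thereby recovering directly the realization \eqref{eq:iso} of $T^\vee\operatorname{Gr}(a,r)^{\oplus n-1}$ used in the proof of Theorem \ref{thm:minimal}.
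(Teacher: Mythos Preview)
Your proof is correct and follows essentially the same route as the paper: both invoke Lemma~\ref{lmE-iso-away-infty} to reduce to isomorphism classes on $\Sigma_n$, then apply Proposition~\ref{lmEExt}(iii) together with the identification $\Ext^{1}_{\On}(\On^{\oplus r-a},\On(E)^{\oplus a})\simeq\Hom(\Com^{r-a},\Com^{a})\otimes H^{1}(\On(E))$ and the computation $h^{1}(\On(E))=n-1$. The only cosmetic difference is that the paper cites \cite[Lemma~3.1]{BBR} for $h^{0}(\On(E))=1$ and $h^{2}(\On(E))=0$, whereas you supply the Riemann--Roch and Serre duality argument directly.
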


\begin{proof}
In view of Lemma \ref{lmE-iso-away-infty} it is enough to prove that there is an isomorphism between the typical fibre of the vector bundle $T^\vee\operatorname{Gr}(a,r)^{\oplus n-1}$ and the vector space of isomorphism classes of vector bundles $\E$ on $\Sigma_{n}$ which are trivial at infinity, satisfy condition \eqref{eq:min}, and such that $\rk(\E)=r$, $c_{1}(\E)=aE$.
Notice first that there is an isomorphism of complex vector spaces
\begin{equation}
\Ext^{1}_{\On}\left(\On^{\oplus r-a},\On(E)^{\oplus a}\right)\simeq \Hom(\Com^{r-a},\Com^a)\otimes H^{1}(\On(E))\,.
\label{eqTipFib}
\end{equation}
By the Riemann-Roch formula, since $h^{0}(\On(E))=1$ and $h^{2}(\On(E))=0$ (see \cite[Lemma~3.1]{BBR}), one has $h^{1}(\On(E))=n-1$. The right-hand side of \eqref{eqTipFib} can therefore be regarded as the typical fibre of the vector bundle $T^\vee\operatorname{Gr}(a,r)^{\oplus n-1}$ in view of eq. \eqref{eqIsoRed}. The thesis is then a consequence of Proposition \ref{lmEExt}(iii). 
\end{proof}

As for the Grassmannian $\Gr(a,r)$, this parameterizes equivalence classes of framings. More explicitly, two framings $\theta\colon\E|_{\li}\stackrel{\sim}{\lra}\Ol_{\li}^{\oplus r}$ and $\theta'\colon\E'|_{\li}\stackrel{\sim}{\lra}\Ol_{\li}^{\oplus r}$ are equivalent if and only if there is an isomorphism $\Lambda\colon\E\lra\E'$ such that 
\begin{equation}
\theta'\circ\Lambda\vert_{\ell_{\infty}} = \theta
\label{eqEquivFr}
\end{equation}
(cf. Definition \ref{definframedsheaf}).  Since any framing $\theta$ is an isomorphism between two trivial sheaves on $\li\simeq\Pu$, there is a  1-1 correspondence between the set of framings  and the set of the induced isomorphisms $H^{0}(\theta)\colon H^{0}(\E|_{\li})\lra\Com^{r}$ between global sections. For this reason, eq.~\eqref{eqEquivFr} is equivalent to
\begin{equation}
H^{0}(\theta')\circ H^{0}(\Lambda\vert_{\ell_{\infty}}) = H^{0}(\theta)\,.
\end{equation}
In view of the reduction of the structure group of $\Pk$ made in Subsection \ref{reductionstructuregroup}, we see that, for every element $[(\E,\theta)]\in\M^n\left(r,a,C_{\mathrm{m}} \right)$, the space $H^{0}(\E|_{\li})$ can be identified with a fixed $r$-dimensional subspace $V$ of $H^{0}(\Vk|_{\li})$. In particular, if one computes the matrix $H^{0}(\beta|_{\li})$ by means of eqs.~\eqref{eq-j-Beta}, one gets a fixed isomorphism $V\stackrel{\sim}{\lra}\Com^{r}$: in this way, we can regard a framing as a matrix in  $\GL(r)$. By using the immersion \eqref{eq-imath}, one proves easily that two framings $\theta$ and $\theta'$ are equivalent if and only if $H^0(\theta')g=H^0(\theta)$ for some $g\in\GL(a,r)$. With these identifications in mind, it is then straightforward that the Grassmannian $\Gr(a,r)=\GL(r)/\GL(a,r)$ parameterizes the equivalence classes $[\theta]$ of framings.

For $n\geq2$, the previous construction enables one to reinterpret intrinsically the canonical projection $T^\vee\operatorname{Gr}(a,r)^{\oplus n-1}\lra\Gr(a,r)$ as the map $[(\E,\theta)]\lra[\theta]$.

\section{Nakajima's flag varieties and the spaces $\M^n\left(r,a,C_{\mathrm{m}} \right)$}\label{Section-Nakajima-Flag}
In this section we show (Proposition  \ref{corMshMq}) that there is an isomorphism between the moduli space $\M^n(r,a,C_{\mathrm{m}})$ and the moduli space of the representations of a suitable GF quiver with relations. The proof is based on a straightforward generalization of a result due to Nakajima \cite{Na94, Na96}.

For any positive integer $d$, let $\A_{d}$ be the Dynkin quiver
\begin{equation}
\xymatrix@R-2em@C+1ex{
\mbox{\footnotesize{$0$}} & \mbox{\footnotesize{$1$}} & \mbox{\footnotesize{$2$}} &  & \mbox{\footnotesize{$d-2$}} & \mbox{\footnotesize{$d-1$}}\\
\bullet  & \bullet \ar[l]_-{a_{1}} & \bullet \ar[l]_-{a_{2}}  & \cdots \ar[l]  & \bullet \ar[l] & \bullet \ar[l]_-{a_{d-1}}
}
\end{equation}
So, $I=\{0,\dots,d-1\}$ and the dimension vector $\vec{v}$ of a representation $(V,X)$ of  $\A_{d}$ is an element of $\mathbb{N}^{d}$. We want to associate with $\A_{d}$ a sequence of GF quiver $\Q_{d,n}$ for $n\geq 1$. Since we are interested only in $(\vec{v},\vec{w})$-dimensional representations, where $\vec{w}=(u,0,\dots,0)$ for some integer $u >0$, it is enough to add to $I$ only the vertex $0'$. 
We define the quivers $\Q_{d,n}$ as follows
\begin{equation}
\begin{gathered}
\xymatrix@R-2em@C+1ex{
\mbox{\footnotesize{$0'$}}&\mbox{\footnotesize{$0$}} & \mbox{\footnotesize{$1$}} & \mbox{\footnotesize{$2$}} &  & \mbox{\footnotesize{$d-2$}} & \mbox{\footnotesize{$d-1$}}\\
 \bullet &\bullet \ar[l]_-{j}  & \bullet \ar[l]_-{a_{1}} & \bullet \ar[l]_-{a_{2}}  & \cdots \ar[l]  & \bullet \ar[l] & \bullet \ar[l]_-{a_{d-1}}
}\\
\xymatrix@R-2em@C+1.5em{
\mbox{\footnotesize{$0'$}}&\mbox{\footnotesize{$0$}} & \mbox{\footnotesize{$1$}} & \mbox{\footnotesize{$2$}} &  & \mbox{\footnotesize{$d-2$}} & \mbox{\footnotesize{$d-1$}}\\
 \bullet \ar@/_15pt/[r]^-{i_{1}} \ar@/_30pt/[r]^-{i_{2}} \ar@/_35pt/@{.>}[r] \ar@/_40pt/@{.>}[r]  \ar@/_50pt/[r]_-{i_{n-1}} 
 &\bullet  \ar@/_/[l]_-{j}   \ar@/_15pt/[r]^-{b_{11}} \ar@/_30pt/[r]^-{b_{12}} \ar@/_35pt/@{.>}[r] \ar@/_40pt/@{.>}[r]  \ar@/_50pt/[r]_-{b_{1,n-1}}
 & \bullet \ar@/_/[l]_-{a_{1}}  
  \ar@/_15pt/[r]^-{b_{21}} \ar@/_30pt/[r]^-{b_{22}} \ar@/_35pt/@{.>}[r] \ar@/_40pt/@{.>}[r]  \ar@/_50pt/[r]_-{b_{2,n-1}}  & \bullet \ar@/_/[l]_-{a_{2}}  \ar@/_15pt/[r] \ar@/_30pt/[r] \ar@/_35pt/@{.>}[r] \ar@/_40pt/@{.>}[r]  \ar@/_50pt/[r] &\mspace{10mu} \cdots \mspace{10mu} \ar@/_/[l]  \ar@/_15pt/[r] \ar@/_30pt/[r] \ar@/_35pt/@{.>}[r] \ar@/_40pt/@{.>}[r]  \ar@/_50pt/[r] & \bullet \ar@/_/[l]   \ar@/_15pt/[r]^-{b_{d-1,1}} \ar@/_30pt/[r]^-{b_{d-1,2}} \ar@/_35pt/@{.>}[r] \ar@/_40pt/@{.>}[r]  \ar@/_50pt/[r]_-{b_{d-1,n-1}} & \bullet \ar@/_/[l]_-{a_{d-1}} \\
}
\end{gathered}
\begin{gathered}
\mbox{\framebox[1cm]{\begin{minipage}{1cm}\centering $n=1$\end{minipage}}}
\vspace{3cm}
\\
\mbox{\framebox[1cm]{\begin{minipage}{1cm}\centering $n\geq2$\end{minipage}}}
\end{gathered}
\end{equation}

A representation of a quiver of this form is supported by a vector space $V\oplus U$, where $U$ is associated with the vertex $0'$; for the sake of brevity, we shall say that such a representation is $(\vec{v},u)$-dimensional.

For $n\geq 2$ let $J_{d,n}$ be the ideal of the algebra $\Com\Q_{d,n}$ generated by the following relations:
\begin{equation}
\begin{gathered}
i_{q}j=0\qquad q=1,\dots,n-1 \qquad \qquad\text{when}\quad d=1\\[5pt]
\left\{
\begin{aligned}
a_{1}b_{1q}+i_{q}j&=0\\
b_{1q}a_{1}&=0
\end{aligned}
\right.\qquad q=1,\dots,n-1\qquad\qquad\text{when}\quad d=2\\[5pt]
\left\{
\begin{aligned}
a_{1}b_{1q}+i_{q}j&=0\\
a_{p+1}b_{p+1,q}-b_{pq}a_{p}&=0\\
b_{d-1,q}a_{d-1}&=0
\end{aligned}
\right.\qquad 
\begin{aligned}
p&=1,\dots,d-2\\
q&=1,\dots,n-1
\end{aligned}
\qquad \qquad\text{when}\quad d>2\,.
\end{gathered}
\label{eqGenJ}
\end{equation}
We define the algebra $\F_{d,n}$ as follows
\begin{equation}\label{algebraFDN}
\F_{d,n}=
\begin{cases}
\Com \Q_{d,1} &\qquad\text{when}\quad n=1\\
\Com\Q_{d,n}/J_{d,n}  &\qquad\text{when}\quad n>1\,.\\
\end{cases}
\end{equation}
We choose an integer $u>d$ and a dimension vector $\vec{v}=(v_{0},\dots,v_{d-1})$ such that $u>v_{0}>v_{1}>\dots >v_{d-1}>0$. Let $U=\Com^{u}$ and 
$V= \bigoplus_{i=0}^{d-1} V_i =\bigoplus_{i=0}^{d-1}\Com^{v_{i}}$. We fix also the stability parameter $\vartheta^+=(1,1,\dots,1)\in\mathbb{R}^{d}$.

 We recall that the partial flag variety $\Fl(\vec{v},u)$ is the smooth projective variety whose points can be identified with filtrations $\Com^{u}\supset E_{0} \supset E_{1} \supset \dots \supset E_{d-1}$ of complex vector spaces such that $\dim E_{i}=v_{i}$, $i=0,\dots,d-1$.  The following result was proved by Nakajima for $n\leq 2$ \cite{Na94, Na96}.
\begin{prop}
\label{propCotFlag}
Let $\M(\F_{d,n},\vec{v},u)_{\vartheta^+}$ be the moduli space of $\vartheta^+$\!-semistable $(\vec{v},u)$-dimensional representations of $\F_{d,n}$.
There is an isomorphism
\begin{equation}
\M(\F_{d,n},\vec{v},u)_{\vartheta^+}\simeq
\begin{cases}
\Fl(\vec{v},u) &\qquad\text{if}\quad n=1;\\
T^\vee\Fl(\vec{v},u)^{\oplus n-1}&\qquad\text{if}\quad n\geq2.
\end{cases}
\label{eqIsoFlag}
\end{equation}
\end{prop}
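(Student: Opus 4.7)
The plan is to treat the cases $n=1$ and $n\geq 2$ separately, recovering Nakajima's classical results in low $n$ and extending them to general $n$ by exploiting the decoupling of the defining relations over the index $q$. First I would translate $\vartheta^+$-stability through Lemma~\ref{lmIsoCB}: setting $\widehat\vartheta=(1,\ldots,1;-\sum_i v_i)$ puts the total slope at zero, so (semi)stability of a $(\vec v,1)$-dimensional representation of $\Q_{d,n}^{\vec w}$ is equivalent to the absence of a nonzero proper subrepresentation supported on $V=\bigoplus V_i$. Concretely, one must rule out nontrivial subspaces $S=\bigoplus S_i \subset V$ preserved by the arrows $a_i$ (and, when $n\geq 2$, by the $b_{pq}$) with $j(S_0)=0$. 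Since the dimension vector $(\vec v,1)$ is primitive, semistability coincides with stability, and by Theorem~\ref{ginzburgtheorem} the moduli is smooth.

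For $n=1$ I would test stability against the candidate subrepresentations $(\ker j,0,\ldots,0)$ and $(0,\ldots,\ker a_i,\ldots,0)$; this forces $j$ and each $a_i$ to be injective, and the converse is clear by cascading. The subspaces $W_i:=(j\circ a_1\circ\cdots\circ a_i)(V_i)\subset \Com^u$ then form a strict flag of dimensions $v_0>v_1>\cdots>v_{d-1}$ that is manifestly $G_{\vec v}$-invariant; conversely, every such flag arises from a representation built out of inclusions, and two stable representations are $G_{\vec v}$-equivalent if and only if they produce the same flag. This yields a bijection of closed points with $\Fl(\vec v,u)$, which upgrades to an isomorphism of smooth varieties.

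For $n\geq 2$ I would introduce the algebra surjection $\F_{d,n}\twoheadrightarrow\F_{d,1}$ that kills all $b_{pq}$ and $i_q$; this induces a forgetful morphism $\pi\colon\M(\F_{d,n},\vec v,u)_{\vartheta^+}\to\Fl(\vec v,u)$. The crucial observation is that the relations \eqref{eqGenJ} are indexed by $q\in\{1,\ldots,n-1\}$ and, for each fixed $q$, involve only the arrows $a_i, j, b_{pq}, i_q$, with no coupling between distinct values of $q$. Consequently, once the flag-datum $(a_i,j)$ has been fixed, the remaining variables $(b_{pq},i_q)$ split into $n-1$ mutually independent copies of the $n=2$ problem. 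For each single $q$, Nakajima's theorem \cite{Na94,Na96} for the framed $A_d$ preprojective algebra identifies the moduli with $T^\vee\Fl(\vec v,u)$, so each copy contributes a cotangent fiber, and the $n-1$-fold assembly produces $T^\vee\Fl(\vec v,u)^{\oplus n-1}$.

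The main obstacle I expect is promoting this fiber-wise picture to a global isomorphism of varieties rather than a set-theoretic bijection. Because the $n-1$ copies of cotangent variables involve disjoint sets of arrows and transform identically under the stabilizer of each flag, no nontrivial twisting can occur, so the total space must be a Whitney sum. The clean way to make this precise is to realize $T^\vee\Fl(\vec v,u)$ as the $G_{\vec v}$-quotient produced by Nakajima's $n=2$ construction and to observe that introducing $n-1$ identical sets of cotangent arrows, subject to the same relations against the common flag-datum $(a_i,j)$, yields literally the $(n-1)$-fold Whitney sum of that quotient over $\Fl(\vec v,u)$. This Whitney-sum upgrade is what distinguishes the statement from a fiber-wise assertion and what closes the argument.
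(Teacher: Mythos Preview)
Your proposal is correct and follows essentially the same line as the paper. The paper separates out the stability characterization (your first two paragraphs) as Lemma~\ref{lmStabQdn}, cites Nakajima directly for $n=1,2$, and for $n\geq 3$ writes the $G_{\vec v}$-equivariant isomorphism
\[
\operatorname{Rep}(\F_{d,n},\vec v,u)^{ss}_{\vartheta^+}\;\simeq\;\underbrace{\R_2\times_{\R_1}\R_2\times_{\R_1}\cdots\times_{\R_1}\R_2}_{(n-1)\text{ times}},
\]
with $\R_m=\operatorname{Rep}(\F_{d,m},\vec v,u)^{ss}_{\vartheta^+}$, then descends to quotients; this is exactly your decoupling-over-$q$ observation phrased as a fibered product rather than a Whitney sum, and your final paragraph is precisely the step that makes the identification global.
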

To prove Proposition \ref{propCotFlag} we simplify the notation we introduced in Section \ref{quiversection}. For a representation of $\F_{d,n}$ supported by $\mathbb{V}:=\left(\bigoplus_{i=0}^{d-1}\Com^{v_{i}}\right)\oplus\Com^{u}$ we set
\begin{align}
\label{eqABfe}
e&=X_{j} & f_{q}&=X_{i_{q}} & A_{p}&=X_{a_{p}} & B_{pq}&=X_{b_{pq}}
\end{align}
with $p=1,\dots,d-1$ and $q=1,\dots,n-1$ (in the case $d=1$ there are no morphisms $A_{p}$ and $B_{pq}$, whilst in the case $n=1$ there are no morphisms $f_{q}$ and $B_{pq}$).

According to Definition \ref{defFrStab}, a $\vartheta^+$\!-semistable $(\vec{v},u)$-dimensional representation of $\F_{d,n}$ is defined in terms of an auxiliary quiver $\Q_{d,n}^{u}$ and of an ideal $J^{u}_{d,n}\subset\Com\Q_{d,n}^{u}$. The quiver $\Q_{d,n}^{u}$ is defined by renaming the vertex $0'$ as $\infty$, by replacing the arrow $j$ with $u$ arrows $\tilde{\jmath}_{1},\dots,\tilde{\jmath}_{u}$ and, if $n>1$, by replacing the arrow $i_{q}$ with $u$ arrows $\tilde{\imath}_{q1},\dots,\tilde{\imath}_{qu}$, for all $q=1,\dots,n-1$. For $n>1$ the ideal $J^{u}_{d,n}$ is generated by the relations obtained by replacing the product $i_{q}j$ with the sum of products $\sum_{l=1}^{u}\tilde{\imath}_{ql}\tilde{\jmath}_{l}$ in eqs. \eqref{eqGenJ}.
The definition of the algebra $\F^{u}_{d,n}$  is given analogously to eq.~\eqref{algebraFDN}.

For a representation of $\F^{u}_{d,n}$ supported by $\mathbb{U}:=\left(\bigoplus_{i=0}^{d-1}\Com^{v_{i}}\right)\oplus\Com$ we set
\begin{align}
\tilde{e}_{l}&=X_{\tilde{\jmath}_{l}} & \tilde{f}_{ql}&=X_{\tilde{\imath}_{ql}} & A_{p}&=X_{a_{p}} & B_{pq}&=X_{b_{pq}}
\end{align}
with $l=1,\dots,u$, $p=1,\dots,d-1$, and $q=1,\dots,n-1$  (in the case $d=1$ there are no morphisms $A_{p}$ and $B_{pq}$, whilst in the case $n=1$ there are no morphisms $\tilde{f}_{ql}$ and $B_{pq}$). We now write down the isomorphism \eqref{eqIsoCB0} for the particular case in question. 
Once fixed a basis $\{\varepsilon_{1},\dots,\varepsilon_{u}\}$ for $\Com^{u}$, we define the linear morphisms
\begin{equation}
\begin{array}{rccl}
\varphi_{l}\colon&\Com^{u}&\lra&\Com\\
& z = \sum z_k \varepsilon_k &\lmt & z_{l}
\end{array}\qquad,\qquad
\begin{array}{rccl}
\psi_{l}\colon&\Com&\lra&\Com^{u}\\
& \nu &\lmt & \nu \varepsilon_{l}
\end{array}
\end{equation}
for $l=1,\dots, u$; of course, one has $\id_{\Com^{u}}=\sum_{l=1}^{u}\psi_{l}\circ\varphi_{l}$. The isomorphism \eqref{eqIsoCB0} is given by the map
\begin{equation}
\label{eqIsoCB'}
(e,f_{p},A_{p},B_{pq})_{
\begin{subarray}{l}
p=1,\dots,d-1  \\
q=1,\dots,n-1
\end{subarray}
}
\lmt (\tilde{e}_{l},\tilde{f}_{pl},A_{p},B_{pq})_{
\begin{subarray}{l}
p=1,\dots,d-1  \\
q=1,\dots,n-1\\
l=1,\dots,u
\end{subarray}
}
\end{equation}
where $\tilde{e}_{l}=\varphi_{l}\circ e$ and $\tilde{f}_{pl}=f_{p}\circ\psi_{l}$.

\begin{lemma}
\label{lmStabQdn}
 A representation  $\left(\mathbb{V},X\right) \in \operatorname{Rep}(\F_{d,n};\vec{v},u)$ is 
$\vartheta^+$\!-semistable if and only if it is $\vartheta^+$\!-stable. It is $\vartheta^+$\!-semistable if and only if the morphisms $e$, $A_{0}$,\dots, $A_{d-1}$ are injective.
\end{lemma}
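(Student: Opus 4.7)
The plan is to apply King's stability criterion via the Crawley-Boevey auxiliary representation of $\Q_{d,n}^{u}$ of Lemma \ref{lmIsoCB}, with stability parameter $\widehat{\vartheta^+}=(1,\dots,1,-\sum_{i}v_{i}) \in \mathbb{R}^{\{0,\dots,d-1\}\sqcup\{\infty\}}$ prescribed by Definition \ref{defFrStab}. The total representation then has $\widehat{\vartheta^+}$-slope $0$, while any proper subrepresentation, supported on $T\oplus T_{\infty}$ with $T=\bigoplus_{i}T_{i}\subseteq V$ and $T_{\infty}\subseteq\Com$, has slope $1$ when $T_{\infty}=0$ and $T\neq 0$, and strictly negative slope when $T_{\infty}=\Com$ and $T\subsetneq V$. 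It follows at once that $\vartheta^+$-semistability coincides with $\vartheta^+$-stability, and that both amount to the nonexistence of any nonzero subrepresentation with $T_{\infty}=0$. Unwinding the definitions via the isomorphism of Lemma \ref{lmIsoCB}, such a subrepresentation corresponds precisely to an $I$-graded subspace $T=\bigoplus_{i=0}^{d-1}T_{i}\subseteq V$ satisfying $T_{0}\subseteq\ker e$, $A_{p}(T_{p})\subseteq T_{p-1}$ for $p=1,\dots,d-1$, and, when $n\geq 2$, $B_{pq}(T_{p-1})\subseteq T_{p}$ for all admissible $p,q$.

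If $e$ and $A_{1},\dots,A_{d-1}$ are all injective, then $T_{0}\subseteq\ker e=0$ yields $T_{0}=0$; combined with injectivity of $A_{1}$, the condition $A_{1}(T_{1})\subseteq T_{0}=0$ yields $T_{1}=0$, and an obvious induction shows $T_{p}=0$ for every $p$, proving the ``if'' direction.

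For the converse I would argue by contrapositive, constructing a nonzero destabilising subrepresentation from any nontrivial kernel. If $\ker e\neq 0$, set $T_{0}=\ker e$ and define $T_{p}$ recursively for $p\geq 1$ by $T_{p}=\sum_{q}B_{pq}(T_{p-1})$ when $n\geq 2$ (and $T_{p}=0$ when $n=1$). Rewriting the relations \eqref{eqGenJ} as $A_{1}B_{1q}=-f_{q}e$ and $A_{p+1}B_{p+1,q}=B_{pq}A_{p}$, an induction on $p$ shows $T_{p}\subseteq\ker A_{p}$: the base case uses $e(T_{0})=0$, and the inductive step uses $A_{p-1}(T_{p-1})=0$. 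Hence $A_{p}(T_{p})=0\subseteq T_{p-1}$, and the remaining inclusions hold by construction. If instead $\ker A_{p_{0}}\neq 0$ for some $p_{0}\in\{1,\dots,d-1\}$, I would take $T_{p}=0$ for $p<p_{0}$, $T_{p_{0}}=\ker A_{p_{0}}$, and $T_{p}=\sum_{q}B_{pq}(T_{p-1})$ for $p>p_{0}$; the same relations guarantee $T_{p}\subseteq\ker A_{p}$ for all $p\geq p_{0}$, yielding a valid nonzero subrepresentation and contradicting semistability.

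The main technical hurdle is precisely this inductive construction in the converse direction: one must carefully use the preprojective-like relations of $\F_{d,n}$ to ensure that the subspaces generated by iterating the $B_{pq}$'s are absorbed by the corresponding $\ker A_{p}$'s (the boundary relation $b_{d-1,q}a_{d-1}=0$ plays no active role, as the induction terminates at $p=d-1$). The remaining verifications are routine slope bookkeeping.
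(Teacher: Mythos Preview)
Your proof is correct and follows essentially the same approach as the paper's: both reduce semistability, via the Crawley-Boevey trick and the parameter $\widehat{\vartheta^+}$, to the nonexistence of a nonzero subrepresentation with $T_\infty=0$, and both construct the destabilising subspace in the non-injective case by setting $T_{p_0}$ equal to the offending kernel and propagating forward via $T_p=\sum_q B_{pq}(T_{p-1})$, using the relations $A_1B_{1q}=-f_q e$ and $A_{p+1}B_{p+1,q}=B_{pq}A_p$ to secure $T_p\subseteq\ker A_p$. Your slope bookkeeping and your forward induction $T_0=0\Rightarrow T_1=0\Rightarrow\cdots$ for the ``if'' direction are cosmetically different from the paper's formulation (which phrases the criterion as $s_\infty=1$ and uses the composite $e\circ A_1\circ\cdots\circ A_m$), but the content is identical.
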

\begin{proof}
Given a subrepresentation $(S\oplus S_\infty,Y)$ of a representation $\left(\mathbb{U},X\right)$ of $\F^{u}_{d,n}$, we set $s_{i}=\dim S_{i}$ for $i=\infty,0,\dots,d-1$ and $\vec{s}=(s_{0},\dots,s_{d-1})$.
According to Definition \ref{defFrStab}, a representation $\left(\mathbb{U},X\right)$ of $\F^{u}_{d,n}$ is $\widehat{\vartheta}^+$\!-semistable if, for all proper, nontrivial subrepresentations $(S\oplus S_\infty,Y)$, one has
\begin{equation}
\vartheta^+\cdot\vec{s}\leq(\vartheta^+\cdot\vec{v})s_{\infty}\,,
\label{eqSemiStab1'}
\end{equation}
it is  $\widehat{\vartheta}^+$\!-stable if strict inequality holds. It is easy to see that a representation $\left(\mathbb{U},X\right)$ is  $\widehat{\vartheta}^+$\!-semistable if and only if for all proper, nontrivial subrepresentations one has $s_{\infty}=1$, and that all $\widehat{\vartheta}^+$\!-semistable representations are stable. This establishes the first statement of the Lemma.

As for the second statement,  we start by showing that, given a representation $\left(\mathbb{U},X\right)$, the condition $s_{\infty}=1$ holds true for all proper, non trivial subrepresentations if and only if
\begin{equation}
\bigcap_{l=1}^{u}\ker \tilde{e}_{l}=\{0\}\qquad\text{and}\qquad\ker A_{p}=\{0\}\qquad\text{for}\quad p=0,\dots,d-1\,.
\label{eqTeA'}
\end{equation}
To prove this claim in one direction we argue by contradiction. If we assume that
eq.~\eqref{eqTeA'} is false, then:\\
$\bullet$ in the case $n=1$, one can find a subrepresentation supported by $S=(0,\dots,0,S_{m},0,\dots,0)$, where 
\begin{equation}\label{eqsubrep}
S_{m} = \begin{cases}
&\bigcap_{l=1}^{u}\ker \tilde{e}_{l} \quad \text{for}\  m=0 \\
&\ker A_{m} \quad \text{for}\  m\in\{1,\dots,d-1\}, \text{with} \ d>1 \,;
\end{cases}\end{equation}
$\bullet$ in the case $n>1$, one can find a subrepresentation supported by $S=(0,\dots,0,S_{m},S_{m+1},\dots,S_{d-1})$, where $S_{m}$ is defined as in eq.~\eqref{eqsubrep}, while $S_{m+1},\dots,S_{d-1}$ are defined inductively by the formula
\begin{equation}
S_{p}=\sum_{q=1}^{n-1}B_{pq}(S_{p-1})
\label{eqSp'}
\end{equation}
where $p=m+1,\dots,d-1$.\\ In both cases, one has $s_\infty = 0$, so that the given subrepresentations are destabilizing.

In the other direction, we assume that  eq.~\eqref{eqTeA'} holds true. Let $(S\oplus S_\infty,Y)$ be a proper, nontrivial subrepresentation. If $s_{\infty}=0$, then $s_{m}>0$ for some $m\in\{0,\dots,d-1\}$, so that  one has the immersion
\begin{equation}
\sum_{l=1}^{u}(\tilde{e}_{l}\circ A_{0}\circ A_{1} \circ\dots\circ A_{m})(S_{m}) \subseteq S_{\infty}
\end{equation}
Eq. \eqref{eqTeA'} implies that $s_{\infty}>0$, and so we have a contradiction. Hence, $s_{\infty} = 1$.

The second statement is then equivalent to eq.~\eqref{eqTeA'}, because one has $e(v)=\sum_{l=1}^{u}\tilde{e}_{l}(v)\varepsilon_{l}$ for all $v\in\Com^{v_{0}}$.
\end{proof}
\begin{proof}[Proof of Proposition \ref{propCotFlag}.]
The cases $n=1, 2$, are, respectively,  Example 3.7 in \cite{Na96} and Theorem 7.3 in \cite{Na94} (see also \cite[Example~3.2.7]{Gin}).

Note that there is a $G_{\vec{v}}$-equivariant morphism $\tilde{q}\colon\operatorname{Rep}(\F_{d,2},\vec{v},u)^{ss}_{\vartheta^+}\lra\operatorname{Rep}(\F_{d,1},\vec{v},u)^{ss}_{\vartheta^+}$ given by the maps
\begin{equation}
\left\{
\begin{array}{rcll}
(e,f) & \lmt & e
&\qquad\text{if $d=1$}\\[7pt]
(e,f,A_{p},B_{p})_{p=1}^{d-1} & \lmt & (e,A_{p})_{p=1}^{d-1}
&\qquad\text{if $d>1$}
\end{array}
\right.
\end{equation}
where, for simplicity, we have put $f=f_{1}$ and $B_{p}=B_{p1}$. The morphism $\tilde{q}$ descends to a morphism $q\colon \M(\F_{d,2},\vec{v},u)_{\vartheta^+}\lra \M(\F_{d,1},\vec{v},u)_{\vartheta^+}$. By composing $q$ with the isomorphisms \eqref{eqIsoFlag} for $n=1,2$, one gets the canonical projection $T^\vee\Fl(\vec{v},u)\lra\Fl(\vec{v},u)$.

For $n\geq 3$, it is easy to prove that there is a $G_{\vec{v}}$-equivariant isomorphism
\begin{equation}
\operatorname{Rep}(\F_{d,n},\vec{v},w)^{ss}_{\vartheta^+}\simeq
\underbrace{\R_{2}\times_{\R_{1}}\R_{2}\times_{\R_{1}}\cdots\times_{\R_{1}}\R_{2}}_{(n-1)-\text{times}}
\end{equation}
where $\R_{2}=\operatorname{Rep}(\F_{d,2},\vec{v},u)^{ss}_{\vartheta^+}$ and $\R_{1}=\operatorname{Rep}(\F_{d,1},\vec{v},u)^{ss}_{\vartheta^+}$. This isomorphism descends to the quotient:
\begin{equation}
\begin{aligned}
\M(\F_{d,n},\vec{v},u)_{\vartheta^+} &\simeq
\underbrace{\M_{2}\times_{\M_{1}}\M_{2}\times_{\M_{1}}\cdots\times_{\M_{1}}\M_{2}}_{(n-1)-\text{times}}\simeq\\
&\simeq\underbrace{T^\vee\Fl(\vec{v},u)\times_{\Fl(\vec{v},u)}T^\vee\Fl(\vec{v},u)\times_{\Fl(\vec{v},u)}\cdots\times_{\Fl(\vec{v},u)}T^\vee\Fl(\vec{v},u)}_{(n-1)-\text{times}}\simeq\\
&\simeq T^\vee\Fl(\vec{v},u)^{\oplus n-1}
\end{aligned}
\end{equation}
where $\M_{2}=\M(\F_{d,2},\vec{v},u)_{\vartheta^+}$ and $\M_{1}=\M(\F_{d,1},\vec{v},u)_{\vartheta^+}$.
\end{proof}
As a direct consequence of Proposition \ref{propCotFlag}, we get a quiver description for the moduli space $\M^n(r,a,C_{\mathrm{m}})$.
\begin{prop}
\label{corMshMq}
There is an isomorphism of complex algebraic varieties
\begin{equation}
\M^n(r,a,C_{\mathrm{m}})\simeq\M(\F_{1,n}, a,r)_{\vartheta^+}
\end{equation}
for any $n\geq1$.
\end{prop}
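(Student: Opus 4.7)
The plan is to derive the isomorphism directly by specializing the parameters of Proposition \ref{propCotFlag} and matching the resulting variety with the one already computed in Theorem \ref{thm:minimal}. Concretely, I would set $d=1$, $\vec{v}=(a)$ and $u=r$; since a partial flag with a single nonzero step of dimension $a$ inside $\Com^{r}$ is just the Grassmannian $\Gr(a,r)$, Proposition \ref{propCotFlag} yields
\begin{equation*}
\M(\F_{1,n}, a, r)_{\vartheta^{+}} \simeq
\begin{cases}
\Gr(a,r) & \text{if } n=1,\\
T^\vee\Gr(a,r)^{\oplus n-1} & \text{if } n\geq 2.
\end{cases}
\end{equation*}
These are exactly the descriptions of $\M^{n}(r,a,C_{\mathrm{m}})$ provided by Theorem \ref{thm:minimal}, so the statement follows at once.

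Before applying Proposition \ref{propCotFlag}, I would unpack the quiver $\Q_{1,n}$ so that the specialization is transparent. With $d=1$ it has only the two vertices $0$ and $0'$, one arrow $j\colon 0\to 0'$ and, for $n\geq 2$, additional arrows $i_{1},\dots,i_{n-1}\colon 0'\to 0$ subject to the relations $i_{q}j=0$ from \eqref{eqGenJ}. An $(a,r)$-dimensional representation is then a tuple $(j,i_{1},\dots,i_{n-1})\in\Hom(\Com^{a},\Com^{r})\oplus\Hom(\Com^{r},\Com^{a})^{\oplus n-1}$ with $i_{q}\circ j=0$, and by Lemma \ref{lmStabQdn} the $\vartheta^{+}$-stability condition reduces to the injectivity of $j$. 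Modding out by the $\GL(a)$-action on $V_{0}=\Com^{a}$ recovers $\Gr(a,r)$ when $n=1$; when $n\geq 2$, each $i_{q}$ factors through the quotient $\Com^{r}/\im(j)$, so that the tuple $(i_{1},\dots,i_{n-1})$ is identified with an element of $\Hom(\Com^{r}/\im(j),\im(j))^{\oplus n-1}$, that is, with a cotangent vector at $\im(j)\in\Gr(a,r)$. This gives a manifestly canonical construction of the isomorphism and matches the intrinsic description of Subsection \ref{geometricremarks}.

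The boundary case $a=0$ falls outside the strict positivity hypothesis $v_{d-1}>0$ of Proposition \ref{propCotFlag}, but both sides of the claimed isomorphism then collapse to a single point and the statement holds by direct inspection. I do not expect any genuine obstacle in this proof: the only substantive verification is the compatibility between the $\GL(a)$-action by change of basis on the quiver side and the parabolic action of $\GL(a,r)\subset\GL(r)$ underlying the reduction of structure group performed in Subsection \ref{reductionstructuregroup}, but this is standard, as the map $\theta\mapsto\theta(\Com^{a})$ identifies $\GL(r)/\GL(a,r)$ with $\Gr(a,r)$.
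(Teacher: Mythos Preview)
Your argument is correct and follows exactly the paper's approach: the proof in the paper consists of the single sentence ``It suffices to compose the isomorphisms \eqref{eqIsoM} and \eqref{eqIsoFlag},'' which is precisely what you do by specializing Proposition~\ref{propCotFlag} to $d=1$, $\vec{v}=(a)$, $u=r$ and matching with Theorem~\ref{thm:minimal}. Your additional unpacking of $\Q_{1,n}$ and your explicit treatment of the degenerate case $a=0$ (which indeed lies outside the standing hypothesis $v_{d-1}>0$ of Proposition~\ref{propCotFlag}) are not in the paper but are welcome clarifications.
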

\begin{proof}
It suffices to compose the isomorphisms \eqref{eqIsoM} and \eqref{eqIsoFlag}.
\end{proof}
In particular, the previous result allows one to regard the quasi-affine variety $\operatorname{Rep}(\F_{1,n}, a,r)^{ss}_{\vartheta^+}$ as a space of ADHM data for $\M^n(r,a,C_{\mathrm{m}})$.

\begin{rem}
The space $\M^2(r,a,C_{\mathrm{m}})$ is isomorphic to the
Nakajima quiver variety $\N_{0,1}(\A_{1},a,r)$ (indeed,  $\Q_{1,2}=\overline{\A_{1}^{\mathrm{fr}}}$). Since the pair $(0,1)$ is $a$-regular (see Definition \ref{defin-v-reg}), $\M^2(r,a,C_{\mathrm{m}})$
carries a symplectic structure which is induced by that one in eq.~\eqref{eq-tildeomega}. With the notation introduced in eq.~\eqref{eqABfe}, this symplectic form can be written as
\begin{equation}
\omega=\tr(\de f_1\wedge\de e) \,.
\label{eq-omega}
\end{equation}
It is easy to see that $\omega$ coincides, up to isomorphism, with the canonical symplectic structure of $T^\vee\Gr(a,r)$. 
\end{rem}

\begin{rem}
As proved by Sala \cite{SAL},  $\M^2(r,a,C_{\mathrm{m}})$ --- like all the spaces $\mathcal{M}^2(r,a,c)$ --- carries also a holomorphic symplectic structure defined in sheaf-theoretic terms. The question that naturally arises is the following: does this symplectic structure coincide with that defined  by eq.~\eqref{eq-omega}?

For $n\neq 2$, the spaces $\M^n(r,a,C_{\mathrm{m}})$ --- and, more generally, all the spaces $\mathcal{M}^n(r,a,c)$ --- are expected to carry a natural Poisson structure. This is suggested by the results proved by Bottacin \cite{Bot95, Bot, Bot00} and by the case $r=1$, but it is unclear how this structure could be constructed.

Work is in progress to address these issues.
\end{rem}

\frenchspacing\bigskip

 \def\cprime{$'$} \def\cprime{$'$} \def\cprime{$'$} \def\cprime{$'$}

\end{document}